\newcommand{\floor}[1]{\left\lfloor #1 \right\rfloor}
\newcommand{\nhalf}{\floor{\frac{n}{2}}}
\newcommand{\DES}{ \mathrm{DES}}
\newcommand{\OddDES}{ \mathrm{OddDES}}
\newcommand{\EvenDES}{ \mathrm{EvenDES}}
\newcommand{\EvenASC}{ \mathrm{EvenASC}}
\newcommand{\OddASC}{ \mathrm{OddASC}}
\newcommand{\des}{ \mathrm{des}}
\newcommand{\odes}{ \mathrm{odes}}
\newcommand{\odesb}{\mathrm{odes}_{B}}
\newcommand{\edes}{ \mathrm{edes}}
\newcommand{\edesb}{\mathrm{edes}_{B}}
\newcommand{\odesd}{\mathrm{odes}_{D}}
\newcommand{\edesd}{\mathrm{edes}_{D}}
\newcommand{\easc}{ \mathrm{easc}}
\newcommand{\oasc}{ \mathrm{oasc}}
\newcommand{\inv}{\mathrm{inv}}
\newcommand{\invb}{\mathrm{inv}_{B}}
\newcommand{\invd}{\mathrm{inv}_{D}}
\newcommand{\negsum}{\mathrm{NegSum}}
\newcommand{\qbinom}[2]{\binom{ #1 }{ #2 }_q}
\newcommand{\eascb}{ \mathrm{easc}_{B}}
\newcommand{\eascd}{ \mathrm{easc}_{D}}
\newcommand{\oascd}{ \mathrm{oasc}_{D}}
\newcommand{\sgnbinom}[2]{\mathrm{sgn}\binom{ #1 }{ #2 }}
\newcommand{\altd}{ \mathrm{altdes}}
\newcommand{\ALTD}{ \mathrm{ALTDES}}
\newcommand{\hatA}{ \mathrm{\widehat{A}}}
\newcommand{\SSS}{\mathfrak{S}}
\newcommand{\chD}{\mathcal{\widehat{D}}}
\newcommand{\cD}{\mathcal{D}}
\newcommand{\BB}{\mathfrak{B}}
\newcommand{\DD}{\mathfrak{D}}
\newcommand{\maxdrop}{\mathrm{maxdrop}}
\newcommand{\Negs}{\mathrm{Negs}}
\newcommand{\SD}{ \mathcal{SD}}
\newcommand{\Snake}{ \mathrm{Snake}}
\newcommand{\ol}[1]{\overline{#1}}
\newcommand{\Cosh}{\mathrm{Cosh}}
\newcommand{\Sinh}{\mathrm{Sinh}}
\newcommand{\Lamb}{\Lambda}
\newtheorem{theorem}{Theorem}
\newtheorem{corollary}[theorem]{Corollary}
\newtheorem{remark}[theorem]{Remark}
\newtheorem{example}[theorem]{Example}
\newtheorem{definition}[theorem]{Definition}
\newtheorem{lemma}[theorem]{Lemma}
\newcommand{\magenta}[1]{{#1}}
\newcommand{\blue}[1]{{#1}}
\title{$q$-enumeration of type B and D Eulerian polynomials 
based on parity of descents
}
\author{
	Hiranya Kishore Dey \\ 
	Department of Mathematics\\
	Indian Institute of Science, Bangalore\\
	Bangalore 560 012, India \\
	email: hiranya.dey@gmail.com
	\and 
	Umesh Shankar \\
	Department of Mathematics \\
	Indian Institute of Technology, Bombay\\
    Mumbai 400 076, India\\
	email: 204093001@iitb.ac.in
	\and 
		Sivaramakrishnan Sivasubramanian \\
		Department of Mathematics\\
		Indian Institute of Technology, Bombay\\
		Mumbai 400 076, India\\
		email: krishnan@math.iitb.ac.in
		\and 
}
\begin{document}

\maketitle

\begin{center}
{\bf \large Abstract} 
\end{center} 
Carlitz and Scoville in 1973 considered a four variable polynomial 
that enumerates permutations in $\SSS_n$ with respect to the parity 
of its descents and ascents. In recent work,  Pan and Zeng  proved a $q$-analogue 
of Carlitz-Scoville's generating function by enumerating permutations with 
the above four statistics along with the inversion number. 
Further, they also proved a type B analogue by enumerating signed 
permutations with respect to the parity of descents and ascents. 
In this work we prove a $q$-analogue of the type B result of Pan and Zeng 
by enumerating permutations in $\BB_n$ with the above four statistics and
the type B inversion number. We also obtain a $q$-analogue 
of the generating function for the type B bivariate alternating 
descent polynomials.  We consider a similar five-variable polynomial 
in the type D Coxeter groups as well and give their egf. Alternating descents 
for the type D groups were 
previously also defined by Remmel, but our definition is slightly different. 
As a by-product of our proofs, we get bivariate $q$-analogues 
of Hyatt's recurrences for the type B and type D Eulerian polynomials. 
	Further corollaries of our results are some symmetry relations 
for these polynomials and $q$-analogues of generating functions for
snakes of types B and D.

\section{Introduction}
\label{sec:intro}

For a positive integer $n$, let $[n] = \{1,2, \ldots, n\}$ and let $\SSS_n$ 
be the set of permutations of $[n]$. For a permutation 
$\pi= \pi_1, \pi_2, \dots, \pi_n \in \SSS_n$, an index $i \in [n-1]$ is said to be a 
descent of $\pi$ if $\pi_i > \pi_{i+1}.$ 
Define $\DES(\pi)= \{i  \in [n-1]: \pi_i> \pi_{i+1} \}$ to be the set 
of descents of $\pi$ and let $\des(\pi) =| \DES(\pi)|.$ The classical 
Eulerian polynomial is defined as the generating function of the 
descent statistic over $\SSS_n,$ that is,  
$$A_n(t)=\sum_{\pi \in \SSS_n} t^{\des(\pi)}.$$

These polynomials are very well-studied.   The books
by Foata and Schutzenberger 
\cite{foata-schutzenberger-eulerian} and 
by Petersen \cite{petersen-eulerian-nos-book}
contain many interesting results on these polynomials. 
An index $i \in [n]$ is called an ascent of $\pi \in \SSS_n$ 
if $\pi_i < \pi_{i+1}$.  Taking parity of the position of the
descents, one can define {\sl odd ascents, odd descents, even
ascents} and {\sl even descents}.   Formally, let 
$\EvenDES(\pi) = \{i \in  [n-1]: \pi_i > \pi_{i+1}, i \mbox{ is even}\}$,
$\EvenASC(\pi) = \{i \in  [n-1]: \pi_i < \pi_{i+1}, i \mbox{ is even}\}$,
$\OddDES(\pi) = \{i \in  [n-1]: \pi_i > \pi_{i+1}, i \mbox{ is odd}\}$ and
$\OddASC(\pi) = \{i \in  [n-1]: \pi_i < \pi_{i+1}, i \mbox{ is odd}\}$.
Carlitz and Scoville 
in \cite{carlitz-scoville-perm-rises-falls}
considered the polynomial

\begin{equation}
\label{eqn:quad_stat}
A_n(s_0,s_1,t_0,t_1) = \sum_{\pi \in \SSS_n} s_0^{\easc(\pi)} 
s_1^{\oasc(\pi)} t_0^{\edes(\pi)} t_1^{\odes(\pi)}.
\end{equation}

They gave the exponential generating function (egf henceforth) for the 
above polynomial (see Theorem \cite[Theorem 3.1]{carlitz-scoville-perm-rises-falls}).
Pan and Zeng considered a $q$-analogue of the above polynomial by 
adding the inversion number as well.  For $\pi \in \SSS_n$ define 
$\inv(\pi) = |\{1 \leq i < j \leq n: \pi_i > \pi_j \}|$.  
They considered

\begin{equation}
\label{eqn:penta_stat}
A_n(s_0,s_1,t_0,t_1,q) = \sum_{\pi \in \SSS_n} s_0^{\easc(\pi)} 
s_1^{\oasc(\pi)} t_0^{\edes(\pi)} t_1^{\odes(\pi)} q^{\inv(\pi)}.
\end{equation}

Pan and Zeng gave the following egf for $A_n(s_0,s_1,t_0,t_1,q).$  
For integers $i\ge 0$, define  $[i]_q=(1+q+\cdots+q^{i-1})$ and define
$n!_q = \prod_{i=1}^n[i]_q$.   
Recall that $\displaystyle \mathrm{e}_q(u)=\sum_{n\geq 0} 
\frac{u^n}{[n]_q!}$. Separating the odd and even terms,  let 
$$\cosh_q(u)=\frac{\mathrm{e}_q(u)+\mathrm{e}_q(-u)}{2} 
\hspace{5 mm}  \mbox{ and } \hspace{5 mm} 
\sinh_q(u)=\frac{\mathrm{e}_q(u)-\mathrm{e}_q(-u)}{2}.$$
Pan and Zeng in \cite[Theorem 1.2]{pan_zeng-enum_perm_parity_descent}
showed the following (they use the variables 
$x,y$ for what we denote $t,s$ respectively.)

\begin{theorem}[Pan and Zeng]
\label{thm:carlitz_scoville}
Let 
$\alpha = \sqrt{(t_0-s_0)(t_1-s_1)}$.  Then, 
\begin{eqnarray}
\label{eqn:carlitz_scoville}
\sum_{n \geq 1} A_n(s_0,s_1,t_0,t_1,q) u^n/n!_q \hspace{8 cm}  \nonumber \\
= \frac{(s_1 + t_1)\cosh_q(\alpha u) + \alpha \sinh_q(\alpha u) -t_1
(\cosh_q^2(\alpha u) - \sinh_q^2(\alpha u)) - s_1}
{s_0s_1 - (s_0t_1 + s_1t_0)\cosh_q(\alpha u) + t_0t_1
(\cosh_q^2(\alpha u) - \sinh_q^2(\alpha u))  }.
\end{eqnarray}
\end{theorem}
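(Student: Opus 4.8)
The plan is to $q$-deform Carlitz and Scoville's original derivation of the $q=1$ case; the one genuinely new input is the compatibility of $q^{\inv}$ with the parabolic-coset structure of $\SSS_n$, which is what lets the classical argument go through with $\cosh_q,\sinh_q$ replacing $\cosh,\sinh$. First I would note that each of $\easc,\oasc,\edes,\odes$ is a function of $n$ and $\DES(\pi)$, so, writing $d_i\in\{t_0,t_1\}$ and $a_i\in\{s_0,s_1\}$ according to the parity of $i\in[n-1]$,
\[ A_n(s_0,s_1,t_0,t_1,q)=\sum_{\pi\in\SSS_n}q^{\inv(\pi)}\prod_{i\in[n-1]}\bigl(d_i+(a_i-d_i)\,[\,i\notin\DES(\pi)\,]\bigr). \]
Expanding the product, interchanging the two summations, and replacing the subset $S$ by its complement gives
\[ A_n=\sum_{S\subseteq[n-1]}\Bigl(\prod_{i\in S}d_i\Bigr)\Bigl(\prod_{i\notin S}(a_i-d_i)\Bigr)\sum_{\substack{\pi\in\SSS_n\\\DES(\pi)\subseteq S}}q^{\inv(\pi)}. \]
Here I would invoke the standard fact (the $q$-multinomial coefficient as a Poincar\'e polynomial) that if $S$ corresponds to the composition $(c_1,\dots,c_m)$ of $n$ then $\sum_{\DES(\pi)\subseteq S}q^{\inv(\pi)}=n!_q/(c_1!_q\cdots c_m!_q)$. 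Dividing by $n!_q$ and summing over $n\ge1$ then turns $\sum_{n\ge1}A_nu^n/n!_q$ into a sum over all nonempty compositions, in which a block of size $c$ starting at position $p+1$ contributes $\tfrac{u^{c}}{c!_q}\prod_{i=p+1}^{p+c-1}(a_i-d_i)$, each non-final block additionally carries a separator factor $d_{p+c}$ at the descent it creates, and the final block carries no separator.

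Next I would set up a two-state transfer system. All the weights $a_i-d_i$ and $d_i$ depend only on the parity of the absolute position $i$, and the parity at which a block begins is determined by the parities of the lengths of the preceding blocks. The crucial observation is that the interior weights of a block alternate between $s_1-t_1$ and $s_0-t_0$, whose product is exactly $(t_0-s_0)(t_1-s_1)=\alpha^2$; hence summing $\tfrac{u^{c}}{c!_q}\prod(\text{interior weights})$ over blocks of a fixed starting parity splits, according to the parity of $c$, into an even part producing $\cosh_q(\alpha u)$ and an odd part producing $\sinh_q(\alpha u)$. Letting $G_o$ (resp.\ $G_e$) be the generating function for the part of the permutation from a block starting at an odd (resp.\ even) position onwards, I would compute the one-block-plus-separator contributions explicitly as finite combinations, with coefficients rational in $s_0,s_1,t_0,t_1$, of $1,\cosh_q(\alpha u),\sinh_q(\alpha u)$ --- for example $B_{oe}=\tfrac{t_1}{\alpha}\sinh_q(\alpha u)$, $B_{oo}=\tfrac{t_0}{s_0-t_0}(\cosh_q(\alpha u)-1)$, and a last-block term $L_o=\tfrac{1}{\alpha}\sinh_q(\alpha u)+\tfrac{1}{s_0-t_0}(\cosh_q(\alpha u)-1)$, with $0\leftrightarrow1$ interchanged to get $B_{eo},B_{ee},L_e$ --- and thereby obtain the linear system $G_o=L_o+B_{oo}G_o+B_{oe}G_e$, $G_e=L_e+B_{eo}G_o+B_{ee}G_e$. (Despite the $\alpha$'s in denominators, every $B_\ast$ and $L_\ast$ is an honest power series in $u$ with polynomial coefficients in the $s$'s and $t$'s, since $\sinh_q(\alpha u)/\alpha$ and $(\cosh_q(\alpha u)-1)/\alpha^2$ are.) Since the first block starts at position $1$, the generating function being sought is $G_o$.

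Finally I would solve the $2\times2$ system,
\[ G_o=\frac{L_o(1-B_{ee})+L_eB_{oe}}{(1-B_{oo})(1-B_{ee})-B_{oe}B_{eo}}, \]
multiply numerator and denominator through by $\alpha^2=(t_0-s_0)(t_1-s_1)$, and simplify using $\alpha^2/(s_0-t_0)=s_1-t_1$ and $\alpha^2/(s_1-t_1)=s_0-t_0$: the mixed $\sinh_q$ terms cancel, and a short computation collapses the denominator to $s_0s_1-(s_0t_1+s_1t_0)\cosh_q(\alpha u)+t_0t_1(\cosh_q^2(\alpha u)-\sinh_q^2(\alpha u))$ and the numerator to $(s_1+t_1)\cosh_q(\alpha u)+\alpha\sinh_q(\alpha u)-t_1(\cosh_q^2(\alpha u)-\sinh_q^2(\alpha u))-s_1$, which is the claimed identity (a quick check: $G_o=u+O(u^2)$, consistent with $A_1=1$). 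The only point at which $q$ intervenes is the coset identity in the first step; after that the computation parallels the classical one, the one difference being that $\cosh_q^2(\alpha u)-\sinh_q^2(\alpha u)=\mathrm{e}_q(\alpha u)\mathrm{e}_q(-\alpha u)\ne1$, so this factor cannot be cleared and survives into the final formula. I expect the main obstacle to be purely bookkeeping: getting the parity of the block-starting positions --- and hence the alternation pattern of the interior weights and the parities of the separators --- exactly right when writing down $B_{oo},B_{oe},B_{eo},B_{ee},L_o,L_e$, together with the final algebraic simplification of the $2\times2$ solution.
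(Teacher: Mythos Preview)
The paper does not actually prove this theorem: it is quoted as \cite[Theorem 1.2]{pan_zeng-enum_perm_parity_descent} and used as background for the type~B and type~D results that form the paper's own contribution. So there is no ``paper's proof'' to compare against directly.

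That said, your argument is correct and self-contained. The descent-set expansion combined with the parabolic Poincar\'e identity $\sum_{\DES(\pi)\subseteq S}q^{\inv(\pi)}=n!_q/(c_1!_q\cdots c_m!_q)$ is exactly the right $q$-ingredient, and your two-state transfer is set up properly: I checked that $1-B_{oo}=(s_0-t_0C)/(s_0-t_0)$, $1-B_{ee}=(s_1-t_1C)/(s_1-t_1)$, so $(1-B_{oo})(1-B_{ee})-B_{oe}B_{eo}=\alpha^{-2}\bigl[s_0s_1-(s_0t_1+s_1t_0)C+t_0t_1(C^2-S^2)\bigr]$, and in the numerator the cross terms in $\alpha S$ combine via $(s_1-t_1C)+t_1(C-1)=s_1-t_1$ to give, after multiplying through by $\alpha^2$, exactly $(s_1+t_1)C+\alpha S-t_1(C^2-S^2)-s_1$. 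So the ``short computation'' you allude to really is short.

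It may interest you that the paper's own proofs of the type~B and type~D analogues (Theorems~\ref{thm:from_carlitz_scoville_typeb_eulerian_q-anal} and~\ref{thm:carlitz_scoville_typed_eulerian_q-anal}) take a somewhat different route: rather than summing over all $S\supseteq\DES(\pi)$ at once, they peel off the rightmost ascending run one position at a time (Lemma~\ref{lem:passing_Gni_to_Gniminus1}), obtaining an explicit recurrence in $n$ (Theorem~\ref{thm:typeb_q_rec}) that is then converted into two coupled equations for the even- and odd-indexed egfs. Your transfer-matrix formulation and their iterated-peel recurrence are two packagings of the same multiplicativity of $q^{\inv}$ over parabolic cosets; yours is more compact for the symmetric-group case, while theirs adapts more transparently to the extra initial/boundary structure in types~B and~D.
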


Using the same notation, Theorem \ref{thm:carlitz_scoville}
gives rise to an identity for the bivariate Eulerian
polynomial and the bivariate alternating Eulerian polynomial.
It is easy to see (and noted by Pan and Zeng
\cite{pan_zeng-enum_perm_parity_descent}) among the four statistics that
involve descents and ascents in Theorem \ref{thm:carlitz_scoville}, there are 
choices of {\sl two} of them which determine the other two statistics.  
Indeed, using our result, we get type B and type D counterparts of 
Theorem \ref{thm:carlitz_scoville}.  \blue{These are presented 
as Theorem \ref{q-egf-type-b-counterpart-pan-zeng} and Theorem 
\ref{thm:pan_zeng_four_var_type-d-version} respectively.
}

Pan and Zeng in  \cite{pan_zeng-enum_perm_parity_descent} also 
gave a type B counterpart of 
these identities without the variable $q$ (that is, 
without taking type $B$ inversions into account).  For a positive 
integer $n$, let $[\pm n] = \{\pm 1, \pm 2, \ldots, \pm n\}$.
$\BB_n$ is the set of permutations $\pi$ of $[\pm n]$
that satisfy $\pi(-i) = -\pi(i)$.  Let $\pi_0 = 0$ for all
$\pi \in \BB_n$ and let $[n]_0  = \{0,1,2,\ldots,n\}$.  Define
$\EvenDES_B(\pi) = \{i \in  [n-1]_0: \pi_i > \pi_{i+1}, i \mbox{ is even}\}$,
$\EvenASC_B(\pi) = \{i \in  [n-1]_0: \pi_i < \pi_{i+1}, i \mbox{ is even}\}$,
$\OddDES_B(\pi) = \{i \in  [n-1]_0: \pi_i > \pi_{i+1}, i \mbox{ is odd}\}$ and
$\OddASC_B(\pi) = \{i \in  [n-1]_0: \pi_i < \pi_{i+1}, i \mbox{ is odd}\}$.
Define $\odes_B(\pi) = |\OddDES_B(\pi)|$, $\edes_B(\pi) = |\EvenDES_B(\pi)|$,
$\oasc_B(\pi) = |\OddASC_B(\pi)|$ and lastly $\easc_B(\pi) = |\EvenASC_B(\pi)|$.
Further, define
\begin{equation} 
\label{eqn:defn_biv_eul_alt_eul}
B_n(s,t) = \sum_{\pi \in \BB_n} s^{\edes_B(\pi)} t^{\odes_B(\pi)}
\hspace{5 mm} \mbox{ and } \hspace{5 mm}
\hat{B}_n(s,t) = \sum_{\pi \in \BB_n} s^{\easc_B(\pi)} t^{\odes_B(\pi)}.
\end{equation}
Setting $s=t$ in the polynomial $\hat{B}_n(s,t)$ gives $\hat{B}_n(t)$, 
the type B alternating Eulerian
polynomial which has been studied for example by 
Ma, Fang, Mansour and Yeh 
\cite{alt-euler-poly-left-peak-poly_ma_fang_mans_yeh}.

\begin{theorem}[Pan and Zeng]
\label{eqn:from_carlitz_scoville_typeb_eulerian}
Let $\alpha = (1-s)(1-t)$.  Then, we have 
\begin{eqnarray}
\sum_{n \geq 1} B_{2n}(s,t)\frac{u^{2n}}{(2n)!} & = & 
\frac{(s+t)\sum_{n \geq 0} \frac{\alpha^{n} (2u)^{2n} }{(2n)!} + \sum_{n \geq 0} \frac{\alpha^{n+1} u^{2n}}{(2n)!} -(1+st) }
{(1 +st) - (s+t)\sum_{n \geq 0} \frac{\alpha^n (2u)^{2n}}{(2n)!}  }
\label{eqn:egf_biv_eulerian_typeB_even}, \\
\sum_{n \geq 0} B_{2n+1}(s,t)\frac{u^{2n+1}}{(2n+1)!} & = & 
\frac{(s^2-1)(t-1) \sum_{n \geq 0} \frac{\alpha^{n} u^{2n+1} }{(2n+1)!}  }
{(1 + st) - (s+ t)\sum_{n \geq 0} \frac{\alpha^{n} (2u)^{2n}}{(2n)!}  }.
\label{eqn:egf_biv_eulerian_typeB_odd} 
\end{eqnarray}
\end{theorem}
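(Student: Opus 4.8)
The plan is to obtain \eqref{eqn:egf_biv_eulerian_typeB_even}--\eqref{eqn:egf_biv_eulerian_typeB_odd} as the $q=1$ specialization of our type~B $q$-Carlitz--Scoville identity, Theorem~\ref{q-egf-type-b-counterpart-pan-zeng}, which we prove independently, so the deduction is not circular. Introduce the four-variable type~B polynomial
$$B_n(s_0,s_1,t_0,t_1) \;=\; \sum_{\pi \in \BB_n} s_0^{\easc_B(\pi)}\, s_1^{\oasc_B(\pi)}\, t_0^{\edes_B(\pi)}\, t_1^{\odes_B(\pi)},$$
so that the polynomials of \eqref{eqn:defn_biv_eul_alt_eul} are the specializations $B_n(s,t)=B_n(1,1,s,t)$ and $\hat B_n(s,t)=B_n(s,1,1,t)$. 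Theorem~\ref{q-egf-type-b-counterpart-pan-zeng} gives a closed form for $\sum_{n\ge1}B_n(s_0,s_1,t_0,t_1,q)\,u^n/n!_q$; putting $q=1$ collapses the apparatus, since $n!_q\to n!$, $\cosh_q,\sinh_q\to\cosh,\sinh$, and $\cosh_q^2-\sinh_q^2\to 1$. Substituting $(s_0,s_1,t_0,t_1)=(1,1,s,t)$ turns the square root occurring there into $\sqrt{\alpha}$ with $\alpha=(1-s)(1-t)$, and leaves an explicit rational expression in $\cosh(\sqrt{\alpha}\,u)$, $\cosh(2\sqrt{\alpha}\,u)$ and $\sinh(\sqrt{\alpha}\,u)$.

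The remaining step is purely formal. Since $\BB_m$ consists of permutations of length $m$, the egf $\sum_{m\ge1}B_m(s,t)\,u^m/m!$ is the sum of its even part $\sum_{n\ge1}B_{2n}(s,t)\,u^{2n}/(2n)!$ and its odd part $\sum_{n\ge0}B_{2n+1}(s,t)\,u^{2n+1}/(2n+1)!$, which are exactly the two left-hand sides in Theorem~\ref{eqn:from_carlitz_scoville_typeb_eulerian}. Using $\cosh(\sqrt{\alpha}\,u)=\sum_{n\ge0}\alpha^n u^{2n}/(2n)!$, $\cosh(2\sqrt{\alpha}\,u)=\sum_{n\ge0}\alpha^n (2u)^{2n}/(2n)!$ and $\sinh(\sqrt{\alpha}\,u)/\sqrt{\alpha}=\sum_{n\ge0}\alpha^n u^{2n+1}/(2n+1)!$, observing that every surviving $\sqrt{\alpha}$ occurs to an even total power (so the answer is rational in $s,t$), and putting everything over the common denominator $(1+st)-(s+t)\cosh(2\sqrt{\alpha}\,u)$, one reads off \eqref{eqn:egf_biv_eulerian_typeB_even} and \eqref{eqn:egf_biv_eulerian_typeB_odd} by matching numerators.

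A self-contained argument not passing through the $q$-refinement --- presumably the one used by Pan and Zeng --- would instead set up the recursion that builds $\pi\in\BB_n$ from a signed permutation of $[\pm(n-1)]$ by inserting the extremal entry $\pm n$ into one of its $n$ gaps. Because $\pm n$ is extremal, inserting it at an interior gap forces an ascent in the position immediately before and a descent in the position immediately after (signs reversed for $-n$), while every subsequent position keeps its ascent/descent type but has its index, hence its parity, shifted. The recursion therefore couples the egf of even-length signed permutations with that of odd-length ones, with the roles of $(s_0,t_0)$ and $(s_1,t_1)$ interchanged, plus a separate contribution for the boundary position $0$ (where $0=\pi_0>\pi_1$ is a descent exactly when $\pi_1<0$); solving this $2\times2$ linear system over formal power series in $u$ yields the two egfs. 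The one real difficulty, in either route, is this parity bookkeeping --- the fact that inserting or deleting an interior entry flips the parity of the whole tail, so one must carry two generating functions in tandem and handle position $0$ apart from the rest; once that is organized, what remains is elementary power-series manipulation.
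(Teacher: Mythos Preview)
Your primary route---specialize the $q$-analogue at $q=1$---is exactly how the paper treats this statement: Theorem~\ref{eqn:from_carlitz_scoville_typeb_eulerian} is cited from Pan--Zeng without proof, rewritten as Theorem~\ref{thm:pan_zeng_biv_eul_egf}, and Remark~\ref{rem:equiv_type-b} records that setting $q=1$ in Theorem~\ref{thm:from_carlitz_scoville_typeb_eulerian_q-anal} recovers it. One correction: the type~B generating functions in Theorem~\ref{q-egf-type-b-counterpart-pan-zeng} (and in \eqref{eqn:type-b-defns-odd-even-biv-eulerian}) carry denominators $B_n(1,q)$, not $n!_q$; at $q=1$ this is $2^n n!$, so you must also substitute $u\mapsto u/2$ (as Remark~\ref{rem:equiv_type-b} does explicitly) before you land on the $u^n/n!$ normalization of Theorem~\ref{eqn:from_carlitz_scoville_typeb_eulerian}. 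Without that rescaling the formal identification you describe does not quite go through.

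Your insertion-of-$\pm n$ sketch is a genuinely different decomposition from what the paper uses to prove the $q$-analogue. The paper instead peels off the rightmost increasing run via the sets $G_{n,i}$ (Lemma~\ref{lem:passing_Gni_to_Gniminus1} and Theorem~\ref{thm:typeb_q_rec}), obtaining a recursion that already separates odd and even $n$ and whose $q$-weight is controlled by Lemma~\ref{thm:combin-typeb-coeff}. Both approaches produce a coupled pair of recursions between the even and odd egfs, but the rightmost-run decomposition interfaces more cleanly with $\inv_B$: appending an ordered signed block contributes a clean $q$-binomial factor, whereas inserting an extremal letter into an interior gap shifts all downstream positions and would require more delicate inversion bookkeeping in the $q$-setting.
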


They also gave similar results about the type B alternating 
descent polynomials.  Their result is as follows.

\begin{theorem}[Pan and Zeng]
\label{eqn:from_carlitz_scoville_typeb_alt_eulerian}
Let $\alpha = (1-s)(1-t)$.  Then, we have 
\begin{eqnarray}
\sum_{n \geq 1} \hat{B}_{2n}(s,t)u^{2n}/(2n)! & = & 
\frac{(1+st) \sum_{n \geq 0} \frac{(-\alpha)^{n} (2u)^{2n} }{(2n)!} + \sum_{n \geq 0} \frac{(-\alpha)^{n+1} u^{2n}}{(2n)!} -(s+t) }
{(s +t) - (1+st)\sum_{n \geq 0} \frac{(-\alpha)^n (2u)^{2n}}{(2n)!}  }
\label{eqn:egf_biv_alt_eulerian_typeB_even}, \\
\sum_{n \geq 0} \hat{B}_{2n+1}(s,t)u^{2n+1}/(2n+1)! & = & 
\frac{(1+s) \sum_{n \geq 0} \frac{(-\alpha)^{n+1} u^{2n+1} }{(2n+1)!}  }
{(s + t) - (1+st)\sum_{n \geq 0} \frac{\alpha^{n} (2u)^{2n}}{(2n)!}  }
\label{eqn:egf_biv_alt_eulerian_typeB_odd}.
\end{eqnarray}
\end{theorem}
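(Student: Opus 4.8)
The plan is to deduce Theorem \ref{eqn:from_carlitz_scoville_typeb_alt_eulerian} from the companion Theorem \ref{eqn:from_carlitz_scoville_typeb_eulerian} by an elementary change of variable, the point being that $\hat{B}_n(s,t)$ is, up to a power of $s$, the reciprocal of $B_n(s,t)$ as a polynomial in $s$. Indeed, for every $\pi \in \BB_n$ and every index $i \in \{0,1,\ldots,n-1\}$ one has $\pi_i \neq \pi_{i+1}$, so $i$ is exactly one of a descent or an ascent of $\pi$; restricting to even indices gives
\begin{equation}
\label{eqn:propB-evensum}
\edesb(\pi) + \eascb(\pi) \;=\; \#\{\,i : 0 \leq i \leq n-1,\ i \text{ even}\,\} \;=\; \ceil{n/2}
\end{equation}
for every $\pi \in \BB_n$. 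Substituting $\eascb(\pi) = \ceil{n/2} - \edesb(\pi)$ into the definition of $\hat{B}_n$ in \eqref{eqn:defn_biv_eul_alt_eul} yields the symmetry relation
\begin{equation}
\label{eqn:propB-sym}
\hat{B}_n(s,t) \;=\; s^{\,\ceil{n/2}}\, B_n(1/s,\, t),
\end{equation}
which reduces the theorem to transporting the identities \eqref{eqn:egf_biv_eulerian_typeB_even} and \eqref{eqn:egf_biv_eulerian_typeB_odd} through $s \mapsto 1/s$ while absorbing the prefactor $s^{\ceil{n/2}}$ into $u$.

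For the even part $\ceil{2n/2}=n$, so \eqref{eqn:propB-sym} reads $\hat{B}_{2n}(s,t)\,u^{2n}/(2n)! = B_{2n}(1/s,t)\,(\sqrt{s}\,u)^{2n}/(2n)!$; hence the left-hand side of \eqref{eqn:egf_biv_alt_eulerian_typeB_even} is exactly the left-hand side of \eqref{eqn:egf_biv_eulerian_typeB_even} after the substitution $(s,u)\mapsto(1/s,\ \sqrt{s}\,u)$, and the plan is to apply the same substitution to the right-hand side of \eqref{eqn:egf_biv_eulerian_typeB_even}. Under it $\alpha=(1-s)(1-t)\mapsto(1-1/s)(1-t)=-\alpha/s$, and the two series occurring in Theorem \ref{eqn:from_carlitz_scoville_typeb_eulerian} transform with no surviving half-integer powers of $s$: one gets $\sum_n \alpha^n(2u)^{2n}/(2n)! \mapsto \sum_n (-\alpha)^n(2u)^{2n}/(2n)!$ and $\sum_n \alpha^{n+1}u^{2n}/(2n)! \mapsto s^{-1}\sum_n (-\alpha)^{n+1}u^{2n}/(2n)!$, together with $s+t\mapsto(1+st)/s$ and $1+st\mapsto(s+t)/s$. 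Cancelling the common factor $s^{-1}$ from numerator and denominator then reproduces \eqref{eqn:egf_biv_alt_eulerian_typeB_even}.

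For the odd part the only extra bookkeeping is $\ceil{(2n+1)/2}=n+1$, so \eqref{eqn:propB-sym} reads $\hat{B}_{2n+1}(s,t)\,u^{2n+1}/(2n+1)! = \sqrt{s}\cdot B_{2n+1}(1/s,t)\,(\sqrt{s}\,u)^{2n+1}/(2n+1)!$, i.e. the left-hand side of \eqref{eqn:egf_biv_alt_eulerian_typeB_odd} is $\sqrt{s}$ times that of \eqref{eqn:egf_biv_eulerian_typeB_odd} after $(s,u)\mapsto(1/s,\sqrt{s}\,u)$. Applying the rules above together with $\sum_n \alpha^n u^{2n+1}/(2n+1)! \mapsto \sqrt{s}\sum_n(-\alpha)^n u^{2n+1}/(2n+1)!$ and $s^2-1\mapsto -(s^2-1)/s^2$, and then multiplying by the outer $\sqrt{s}$, all half-integer powers of $s$ cancel, and the identity $(s^2-1)(1-t)(-\alpha)^n = (1+s)(-\alpha)^{n+1}$ recasts the numerator in the shape displayed in \eqref{eqn:egf_biv_alt_eulerian_typeB_odd}, which completes the derivation. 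The part requiring genuine care is the consistent handling of the formal square root $\sqrt{s}$ and the tracking of signs of the powers of $\alpha$; there is no combinatorial obstacle, since the whole argument rests on the single counting identity \eqref{eqn:propB-evensum}. (Alternatively, both Theorem \ref{eqn:from_carlitz_scoville_typeb_eulerian} and Theorem \ref{eqn:from_carlitz_scoville_typeb_alt_eulerian} can be recovered simultaneously by specialising the five-variable type B generating function of Theorem \ref{q-egf-type-b-counterpart-pan-zeng} at $q=1$ to $(s_0,s_1,t_0,t_1)=(1,1,s,t)$ and $(s,1,1,t)$ respectively; the substitution argument above is the shorter route.)
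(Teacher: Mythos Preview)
Your argument is correct and is precisely the approach the paper itself uses. The paper does not give an independent proof of this Pan--Zeng theorem, but its proof of the $q$-analogue (Theorem~\ref{thm:from_carlitz_scoville_typeb_alt_eulerian_q-anal}) is exactly your substitution trick: it records the relations $\hat{B}_{2k}(s,t,q)=s^{k}B_{2k}(1/s,t,q)$ and $\hat{B}_{2k+1}(s,t,q)=s^{k+1}B_{2k+1}(1/s,t,q)$, and then reads off $\hat{H}_i(s,t,q,u)$ from $H_i$ via $(s,u)\mapsto(1/s,\sqrt{s}\,u)$ (with an extra $\sqrt{s}$ in the odd case). Your computation specialises this to $q=1$ and carries it out explicitly at the level of the power-series forms in Theorem~\ref{eqn:from_carlitz_scoville_typeb_eulerian}; note that your derivation yields $(-\alpha)^n$ in the denominator of \eqref{eqn:egf_biv_alt_eulerian_typeB_odd}, matching \eqref{eqn:egf_biv_alt_eulerian_typeB_even}, so the $\alpha^n$ printed there is a typo.
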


$$
\mbox{Let } H_0(s,t,u) = \sum_{n \geq 0} B_{2n}(s,t) \frac{u^{2n}}{(2n)!} 
\hspace{5 mm}  \mbox{ and } \hspace{5 mm} H_1(s,t,u) = \sum_{n \geq 0} 
B_{2n+1}(s,t)\frac{u^{2n+1}}{(2n+1)!}.$$
Recall that $\cosh(x) = \frac{1}{2}\Big( \exp(x) + \exp(-x) \Big) $ and 
$\sinh(x) = \frac{1}{2} \Big(\exp(x) - \exp(-x) \Big)$.  
\begin{equation}
\label{eqn:defn_M}
\mbox{Define } M^2 = \alpha.
\end{equation}  
It is easy to see that the following alternate form can be used to 
state Theorem \ref{eqn:from_carlitz_scoville_typeb_eulerian}.  

\begin{theorem}[Pan and Zeng]
\label{thm:pan_zeng_biv_eul_egf}
With the above notation, 
\begin{eqnarray}
\label{eqn:typeb-eventerms-pz}
    H_0(s,t,u) & = & \frac{M^2\cosh(uM)}{M^2\cosh^2(uM)-(s+1)(t+1)\sinh^2(uM)}, \\
    \label{eqn: typeb-oddterms-pz}
    H_1(s,t,u) & = & \frac{M(s+1)\sinh(uM)}{M^2\cosh^2(uM)-(s+1)(t+1)\sinh^2(uM)}.
\end{eqnarray}
\end{theorem}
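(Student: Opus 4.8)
The plan is to show that Theorem~\ref{thm:pan_zeng_biv_eul_egf} is nothing but an algebraic rewriting of the right-hand sides in Theorem~\ref{eqn:from_carlitz_scoville_typeb_eulerian}, so no new enumeration over $\BB_n$ is needed; everything follows from Theorem~\ref{eqn:from_carlitz_scoville_typeb_eulerian} together with the substitution $M^2 = \alpha$ from \eqref{eqn:defn_M} and the classical hyperbolic identities.

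First I would close up the three power series that appear in Theorem~\ref{eqn:from_carlitz_scoville_typeb_eulerian} using $M^2 = \alpha = (1-s)(1-t)$. Since $\alpha^n(2u)^{2n} = (2uM)^{2n}$, one gets $\sum_{n\geq 0}\alpha^n(2u)^{2n}/(2n)! = \cosh(2uM)$; likewise $\alpha^{n+1}u^{2n} = M^2(uM)^{2n}$ gives $\sum_{n\geq 0}\alpha^{n+1}u^{2n}/(2n)! = M^2\cosh(uM)$, and $\alpha^n u^{2n+1} = M^{-1}(uM)^{2n+1}$ gives $\sum_{n\geq 0}\alpha^n u^{2n+1}/(2n+1)! = \sinh(uM)/M$. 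In particular the common denominator of \eqref{eqn:egf_biv_eulerian_typeB_even} and \eqref{eqn:egf_biv_eulerian_typeB_odd} becomes $(1+st) - (s+t)\cosh(2uM)$.

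Next, for the even terms I would use $B_0(s,t) = 1$, so $H_0(s,t,u) = 1 + \sum_{n\geq 1}B_{2n}(s,t)u^{2n}/(2n)!$; adding $1$ to the right-hand side of \eqref{eqn:egf_biv_eulerian_typeB_even} and putting the sum over the common denominator, the numerator telescopes (the $\pm(s+t)\cosh(2uM)$ and $\mp(1+st)$ terms cancel) down to $M^2\cosh(uM)$, which is the numerator claimed in \eqref{eqn:typeb-eventerms-pz}. For the odd terms, $H_1(s,t,u)$ is literally the left-hand side of \eqref{eqn:egf_biv_eulerian_typeB_odd}, and since $(s^2-1)(t-1) = (s+1)(s-1)(t-1) = (s+1)M^2$, the numerator $(s^2-1)(t-1)\sinh(uM)/M$ equals $(s+1)M\sinh(uM)$, matching \eqref{eqn: typeb-oddterms-pz}. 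What remains in both cases is to identify the denominator, i.e.\ to verify
\[
(1+st) - (s+t)\cosh(2uM) \;=\; M^2\cosh^2(uM) - (s+1)(t+1)\sinh^2(uM).
\]
For this I would expand $M^2 = 1 - s - t + st$ and $(s+1)(t+1) = 1 + s + t + st$ on the right, regroup as $(1+st)\bigl(\cosh^2(uM)-\sinh^2(uM)\bigr) - (s+t)\bigl(\cosh^2(uM)+\sinh^2(uM)\bigr)$, and invoke $\cosh^2 x - \sinh^2 x = 1$ and $\cosh^2 x + \sinh^2 x = \cosh(2x)$.

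There is essentially no genuine obstacle here; the computation is short and mechanical. The only points to be careful about are the index shift between the $n\geq 1$ summations in Theorem~\ref{eqn:from_carlitz_scoville_typeb_eulerian} and the $n\geq 0$ summations defining $H_0,H_1$ (absorbed by the term $B_0(s,t)=1$), and keeping the two hyperbolic double-angle formulas straight when collapsing the denominator.
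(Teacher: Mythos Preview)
Your proposal is correct and matches the paper's treatment: the paper does not give a separate proof of Theorem~\ref{thm:pan_zeng_biv_eul_egf} but simply asserts that ``it is easy to see'' that it is an alternate form of Theorem~\ref{eqn:from_carlitz_scoville_typeb_eulerian}, which is precisely the algebraic rewriting (closed-form hyperbolic substitutions and the double-angle identity for the denominator) you have carried out in detail.
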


\noindent 
Recall that length in Type B Coxeter groups is defined as follows 
(see \cite[Page 294]{petersen-eulerian-nos-book}).  For $\pi \in \BB_n$,
\begin{equation}
\label{eqn:typeb-inv-defn}
\inv_B(\pi)=  | \{ 1 \leq i < j \leq n : \pi_i > \pi_j \} |+  
| \{ 1 \leq i < j \leq n : -\pi_i > \pi_j \}| +|\Negs(\pi)|,
\end{equation} 
where $\Negs(\pi) = \{\pi_i : i > 0, \pi_i < 0 \}$.
Further, recall the definition of $\odes_B(\pi)$ and 
$\edes_B(\pi)$ from earlier.  Define
\begin{eqnarray}
\label{eqn:defn_eulerian_bi_tri_variant}
B_n(t,q) = \sum_{\pi \in \BB_n} t^{\des_B(\pi)}q^{\inv_B(\pi)}
\hspace{-2 mm} 
& \mbox{and }& 
\hspace{-2 mm} 
B_n(s,t,q) = \sum_{\pi \in \BB_n} s^{\edes_B(\pi)}t^{\odes_B(\pi)}
q^{\inv_B(\pi)},\\
\label{eqn:type-b-defns-odd-even-biv-eulerian}
H_0(s,t,q,u)=\sum_{k \geq 0} B_{2n}(s,t,q)\frac{u^{2n}}{B_{2n}(1,q)} 
\hspace{-3 mm}
& \mbox{ and } & 
\hspace{-4 mm}
H_1(s,t,q,u)=\sum_{k \geq 0} B_{2n+1}(s,t,q)\frac{u^{2n+1}}{B_{2n+1}(1,q)}.
\end{eqnarray}

Let $\displaystyle \exp_B(u;q)=\sum_{n \geq 0}\frac{u^n}{B_n(1,q)}$.
As before, we separate terms with odd and even exponents and define
$$\cosh_B(u;q)=\frac{\exp_B(u;q)+\exp_B(-u;q)}{2} 
\hspace{5 mm}  \mbox{ and } \hspace{5 mm} 
\sinh_B(u;q)=\frac{\exp_B(u;q)-\exp_B(-u;q)}{2}.$$
With this notation, our first main result is the 
following $q$-analogue of Theorem \ref{thm:pan_zeng_biv_eul_egf}.

\begin{theorem}
\label{thm:from_carlitz_scoville_typeb_eulerian_q-anal}
We have
\noindent
\begin{eqnarray}
H_0(s,t,q,u) & = & \frac{(1-s)\bigg(\big(1-t\cosh_q(Mu)\big){\cosh_B(Mu;q)}+t\sinh_q(Mu){\sinh_B(Mu;q)}\bigg)}{1-(s+t)\cosh_q(Mu)+st\mathrm{e}_q(Mu)\mathrm{e}_q(-Mu)}, 
\label{eqn:from_carlitz_scoville_typeb_eulerian_q-anal_even}\\
H_1(s,t,q,u) & = & \frac{M\bigg(\big(1-s\cosh_q(Mu)\big){\sinh_B(Mu;q)}+s\sinh_q(Mu){\cosh_B(Mu;q)}\bigg)}{1-(s+t)\cosh_q(Mu)+st\mathrm{e}_q(Mu)\mathrm{e}_q(-Mu)}. 
\label{eqn:from_carlitz_scoville_typeb_eulerian_q-anal_odd}
\end{eqnarray}
\end{theorem}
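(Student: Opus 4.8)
The plan is to prove Theorem~\ref{thm:from_carlitz_scoville_typeb_eulerian_q-anal} by first establishing a bivariate $q$-analogue of Hyatt's type~B recurrence for $B_n(s,t,q)$ and then translating that recurrence into a pair of functional equations for $H_0$ and $H_1$ which can be solved in closed form. The recurrence comes from the insertion operation on signed permutations: from $\pi'\in\BB_{n-1}$ one builds $\pi\in\BB_n$ by inserting the letter $+n$ at one of the $n$ available positions (position $0$ always keeping $\pi_0=0$), or inserting $-n$ at one of those $n$ positions. A short computation from~\eqref{eqn:typeb-inv-defn} shows that inserting $+n$ at position $j$ raises $\invb$ by exactly $n-j$, while inserting $-n$ at position $j$ raises $\invb$ by exactly $n+j-1$; as $j$ runs over $[n]$ these $2n$ increments are precisely $0,1,\dots,2n-1$, so the total $q$-weight of the $2n$ children of $\pi'$ is $[2n]_q=B_n(1,q)/B_{n-1}(1,q)$, which is exactly the factor the series $\exp_B(u;q)$ expects.

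The delicate point is the effect on $\edes_B$ and $\odes_B$. Appending $\pm n$ at the last position affects only the new index $n-1$, and whether this produces an even or an odd ascent/descent depends only on the parity of $n-1$ and the sign of the inserted letter; this case is ``clean''. Inserting $\pm n$ at an interior position $j<n$, however, forces an ascent at index $j-1$ and a descent at index $j$ (reversed for $-n$), overwrites the old status at index $j-1$ of $\pi'$, and -- crucially -- carries indices $j,\dots,n-2$ of $\pi'$ to indices $j+1,\dots,n-1$ of $\pi$, flipping their parity while preserving their ascent/descent type. So the recursion does not close on $B_n(s,t,q)$ alone; the natural carrier is the five-variable polynomial $B_n(s_0,s_1,t_0,t_1,q)=\sum_{\pi\in\BB_n}s_0^{\easc_B(\pi)}s_1^{\oasc_B(\pi)}t_0^{\edes_B(\pi)}t_1^{\odes_B(\pi)}q^{\invb(\pi)}$ of Theorem~\ref{q-egf-type-b-counterpart-pan-zeng}, for which the ``suffix parity flip'' is exactly the substitution $(s_0,s_1,t_0,t_1)\mapsto(s_1,s_0,t_1,t_0)$. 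Summing over insertion positions, organised by the parities of $j$ and of $n$, gives a pair of recursions (one for even $n$, one for odd $n$) relating these five-variable polynomials at rank $n$ to those at rank $n-1$; specialising $s_0=s_1=1$, $t_0=s$, $t_1=t$ and using $B_n(s,t,q)=B_n(1,1,s,t,q)$ collapses this to the bivariate $q$-Hyatt recurrence. (An alternative route to the same identities is a type~B, $q$-refined Carlitz--Scoville maximal-run decomposition of the word $(0,\pi_1,\dots,\pi_n)$: the $q$-weight of filling a fixed run-shape factors through $q$-binomial coefficients -- producing $\mathrm e_q$, hence $\cosh_q,\sinh_q$ -- while the run emanating from the pinned letter $0$, together with the sign choices on the entries, accounts for the separate $\exp_B$-type factor $\cosh_B,\sinh_B$.)

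To finish, I would multiply the recurrence by $u^n/B_n(1,q)$, sum over $n$, and split into even and odd parts; using $B_n(1,q)/B_{n-1}(1,q)=[2n]_q$ at each step, the insertion factor reindexes cleanly, and the resulting relations among the generating functions are, after the rescaling $u\mapsto Mu$ with $M^2=\alpha=(1-s)(1-t)$, governed by the $q$-difference equations of $\mathrm e_q$ and $\exp_B(\cdot\,;q)$ (equivalently of $\cosh_q,\sinh_q$ and of $\cosh_B(\cdot\,;q),\sinh_B(\cdot\,;q)$) -- which is precisely why those four functions, evaluated at $Mu$, are the building blocks of the answer. What then remains is a small linear system for $H_0$ and $H_1$ (together with their parity-swapped companions, which collapse after the specialisation above); solving it yields the two displayed rational expressions, with the common denominator $1-(s+t)\cosh_q(Mu)+st\,\mathrm e_q(Mu)\mathrm e_q(-Mu)$ appearing as its determinant.

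The main obstacle is the first stage: getting the $q$-Hyatt recurrence exactly right -- simultaneously tracking the forced local ascent/descent, the overwrite at index $j-1$, the parity flip of the suffix, and the precise $\invb$ increment for each of the $2n$ insertions, and organising everything into a recurrence clean enough to be fed into the generating functions. Once that is done, the passage to the functional equations and the solution of the linear system are routine $q$-series manipulations; useful consistency checks are $q=1$ (the $q$-hyperbolic functions degenerate and one must recover Theorem~\ref{thm:pan_zeng_biv_eul_egf}) and $s=t=1$, that is $M\to 0$, where a limiting argument should give $H_0=1/(1-u^2)$ and $H_1=u/(1-u^2)$, consistently with $B_n(1,1,q)=B_n(1,q)$.
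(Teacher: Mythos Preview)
Your primary route via insertion of $\pm n$ has a genuine gap. You correctly observe that inserting at position $j<n$ shifts the indices $j,\dots,n-2$ of $\pi'$ to $j+1,\dots,n-1$ of $\pi$, flipping their parity; but your fix --- passing to the five-variable polynomial and invoking the substitution $(s_0,s_1,t_0,t_1)\mapsto(s_1,s_0,t_1,t_0)$ --- does not do what you need. That substitution is a \emph{global} parity flip, whereas the insertion flips parity only on the suffix beyond $j$ and leaves the prefix untouched. The weight of $\pi$ therefore depends on exactly which descents/ascents of $\pi'$ lie to the left of $j$ versus to the right, information that the five-variable polynomial does not retain. Summing over $j$ does not rescue this: you would need, for each $\pi'$, the sum over $j$ of (prefix weight with original parities)$\times$(suffix weight with flipped parities)$\times$(local factor) to depend only on the five statistics of $\pi'$, and there is no reason it should. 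So the promised ``collapse'' after specialising $s_0=s_1=1$ does not occur, and the recursion you sketch does not close.

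The paper avoids this obstruction by working instead with the rightmost increasing run --- essentially the ``maximal-run decomposition'' you mention only in passing as an alternative. Concretely, one lets $G_{n,i}\subset\BB_n$ be the set of signed permutations whose last $n-i$ entries are increasing, builds a bijection $\BB_i\times\sgnbinom{[n]}{n-i}\to G_{n,i}$, and shows (Lemma~\ref{lem:passing_Gni_to_Gniminus1}) that the $(s,t,q)$-weighted sum over $G_{n,i}$ differs from that over $G_{n,i-1}$ by a clean term involving $B_i(s,t,q)$. Iterating from $i=n-1$ down to $i=-1$ gives a closed recurrence for $B_n(s,t,q)$ in terms of all smaller $B_{n-k}(s,t,q)$ (Theorem~\ref{thm:typeb_q_rec}), with coefficients $(1-s)^a(1-t)^b/[k]_q!$; the crucial point is that peeling off a \emph{suffix} never shifts the positions of what remains, so no parity flip arises. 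From there your final paragraph is essentially right: multiply by $u^n/B_n(1,q)$, sum, separate even and odd $n$, and one lands on a $2\times 2$ linear system in $H_0,H_1$ whose determinant is the displayed denominator.
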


Theorem \ref{thm:from_carlitz_scoville_typeb_eulerian_q-anal} is
proved in Subsection \ref{subsec:type_b_gen_fns}.
Recalling \eqref{eqn:defn_biv_eul_alt_eul}, define
$$\hat{H}_0(s,t,u) = \sum_{n \geq 0} \hat{B}_{2n}(s,t) \frac{u^{2n}}{(2n)!} 
\hspace{5 mm}  \mbox{ and } \hspace{5 mm} \hat{H}_1(s,t,u) = \sum_{n \geq 0} 
\hat{B}_{2n+1}(s,t)\frac{u^{2n+1}}{(2n+1)!}.$$

	We have rewritten Theorem
\ref{eqn:from_carlitz_scoville_typeb_eulerian}
as Theorem \ref{thm:pan_zeng_biv_eul_egf} and stated our 
generalization as Theorem 
\ref{thm:from_carlitz_scoville_typeb_eulerian_q-anal}. 
Similarly, it is easy to see that Theorem 
\ref{eqn:from_carlitz_scoville_typeb_alt_eulerian} can 
be rewritten as follows.  

\begin{theorem}[Pan and Zeng]
\label{thm:pan_zeng_biv_alt_egf}
With the above notation, 
\begin{eqnarray}
\label{eqn:alttypeb-eventerms-pz}
    \hat{H}_0(s,t,u) & = & \frac{-(s-1)(t-1)\cos(Mu)}{s+t-(ts+1)\cos(2Mu)}, \\
    \label{eqn:alttypeb-oddterms-pz}
    \hat{H}_1(s,t,u) & = & \frac{-M(s+1)\sin(Mu)}{s+t-(ts+1)\cos(2Mu)}.
\end{eqnarray}
\end{theorem}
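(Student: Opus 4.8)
The plan is to derive Theorem \ref{thm:pan_zeng_biv_alt_egf} directly from Theorem \ref{eqn:from_carlitz_scoville_typeb_alt_eulerian} by recognising the power series occurring there as trigonometric functions via the substitution $M^2 = \alpha$ of \eqref{eqn:defn_M}. Since $(-\alpha)^n = (-1)^n M^{2n}$, we have the elementary identities
\begin{align*}
&\sum_{n \geq 0} \frac{(-\alpha)^n (2u)^{2n}}{(2n)!} = \cos(2Mu), \qquad \sum_{n \geq 0} \frac{(-\alpha)^n u^{2n}}{(2n)!} = \cos(Mu), \\
&\sum_{n \geq 0} \frac{(-\alpha)^n u^{2n+1}}{(2n+1)!} = \frac{\sin(Mu)}{M},
\end{align*}
the last one obtained by pulling a factor of $M$ out of the Taylor series of $\sin(Mu)$. (In using \eqref{eqn:egf_biv_alt_eulerian_typeB_odd} we read the series in its denominator as $\sum_{n \geq 0}\frac{(-\alpha)^n(2u)^{2n}}{(2n)!}$, so that it agrees with the denominator of \eqref{eqn:egf_biv_alt_eulerian_typeB_even}, in parallel with the non-alternating Theorem \ref{eqn:from_carlitz_scoville_typeb_eulerian}.)

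For the odd case there is then nothing left to do: substituting the identities above into \eqref{eqn:egf_biv_alt_eulerian_typeB_odd}, the numerator becomes $(1+s)\cdot(-\alpha)\cdot\frac{\sin(Mu)}{M} = -M(s+1)\sin(Mu)$ (using $\alpha = M^2$), and the denominator becomes $s+t-(ts+1)\cos(2Mu)$, which is precisely \eqref{eqn:alttypeb-oddterms-pz}.

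For the even case the only bookkeeping point is that $\hat H_0(s,t,u)=\sum_{n\geq 0}\hat B_{2n}(s,t)u^{2n}/(2n)!$ carries the constant term $\hat B_0(s,t)=1$, while \eqref{eqn:egf_biv_alt_eulerian_typeB_even} sums only over $n \geq 1$; thus $\hat H_0(s,t,u)$ equals $1$ plus the right-hand side of \eqref{eqn:egf_biv_alt_eulerian_typeB_even}. After substituting the trigonometric identities, I would put this sum over the common denominator $s+t-(ts+1)\cos(2Mu)$: the $(ts+1)\cos(2Mu)$ terms cancel and so do the $\pm(s+t)$ terms, leaving the numerator $-\alpha\cos(Mu) = -(s-1)(t-1)\cos(Mu)$, which is exactly \eqref{eqn:alttypeb-eventerms-pz}.

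There is no genuine obstacle here: the argument is a short term-by-term rewriting followed by one cancellation, and the only things to watch are the sign conventions and the constant term $\hat B_0$. As a consistency check one can evaluate both sides at $u=0$ and at order $u$, recovering $\hat B_0(s,t)=1$ and $\hat B_1(s,t)=s+1$.
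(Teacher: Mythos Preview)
Your proposal is correct and is precisely what the paper intends: the paper does not give a separate proof of Theorem~\ref{thm:pan_zeng_biv_alt_egf} but merely states that ``it is easy to see that Theorem~\ref{eqn:from_carlitz_scoville_typeb_alt_eulerian} can be rewritten'' in this form, i.e.\ exactly the trigonometric substitution you carry out. Your handling of the constant term $\hat B_0=1$ and of the apparent typo in the denominator of \eqref{eqn:egf_biv_alt_eulerian_typeB_odd} are both appropriate.
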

Define $\hat{B}_{n}(s,t,q)=
\sum_{\pi \in \BB_{n}} t^{\odes_B(\pi)} s^{\easc_B(\pi)}q^{\inv_B(\pi)}$  
and let 
$$ \hat{H}_1(s,t,q,u)=
\sum_{n\geq 0} \hat{B}_{2n+1}(s,t,q)\frac{u^{2n+1}}{B_{2n+1}(1,q)},
\hspace{3 mm}  \mbox{ and } \hspace{3 mm} 
\widehat{H}_0(s,t,q,u) = 
\sum_{n\geq 0} \hat{B}_{2n}(s,t,q)\frac{u^{2n}}{B_{2n}(1,q)}.$$

Moreover, let 
$$\cos_B(u;q)=\frac{\exp_B(iu;q)+\exp_B(-iu;q)}{2} \hspace{5 mm}  \mbox{ and } 
\hspace{5 mm} \sin_B(u;q)=\frac{\exp_B(iu;q)-\exp_B(-iu;q)}{2}.$$
Another of our main results is the following $q$-analogue of Theorem 
\ref{thm:pan_zeng_biv_alt_egf}. 

\begin{theorem}
\label{thm:from_carlitz_scoville_typeb_alt_eulerian_q-anal}
We have 
\begin{eqnarray}
\widehat{H}_0(s,t,q,u) & = & \frac{(s-1)\bigg( (1-t\cos_q(Mu))\cos_{B}(Mu;q)-t\sin_q(Mu)\sin_{B}(Mu;q)\bigg)}{s+t\mathrm{e}_q(iMu)\mathrm{e}_q(-iMu)-(ts+1)\cos_q(Mu)},  
\label{eqn:from_carlitz_scoville_typeb_alt_eulerian_q-anal_even} \\
\widehat{H}_1(s,t,q,u) & = & \frac{-M\bigg((s-\cos_q(Mu)\sin_{B}(Mu;q)+\sin_q(Mu)\cos_{B}(Mu;q) \bigg)}{s+t\mathrm{e}_q(iMu)\mathrm{e}_q(-iMu)-(ts+1)\cos_q(Mu)}.
\label{eqn:from_carlitz_scoville_typeb_alt_eulerian_q-anal_odd} 
\end{eqnarray}
\end{theorem}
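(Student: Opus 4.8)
The plan is to deduce Theorem~\ref{thm:from_carlitz_scoville_typeb_alt_eulerian_q-anal} from the already-proved Theorem~\ref{thm:from_carlitz_scoville_typeb_eulerian_q-anal} by a single substitution of variables, in exactly the spirit of the passage from Theorem~\ref{thm:pan_zeng_biv_eul_egf} to Theorem~\ref{thm:pan_zeng_biv_alt_egf} in the $q = 1$ case. The one combinatorial input is the following: for $\pi \in \BB_n$ every index $i \in [n-1]_0$ is either an ascent or a descent of $\pi$ (there are no ties, since $\pi_0 = 0$ and all other entries are nonzero), so $\easc_B(\pi) + \edes_B(\pi)$ equals the number of even indices in $[n-1]_0$, namely $\ceil{n/2}$, \emph{for every} $\pi \in \BB_n$. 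As replacing $\easc_B$ by $\edes_B$ affects neither $\odes_B$ nor $\inv_B$, this yields, for all $m \ge 0$, the identities $\hat{B}_{2m}(s,t,q) = s^{m} B_{2m}(1/s,t,q)$ and $\hat{B}_{2m+1}(s,t,q) = s^{m+1} B_{2m+1}(1/s,t,q)$.

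Plugging these into the definitions of $\widehat{H}_0$ and $\widehat{H}_1$ and comparing powers of $u$ (and using $B_n(1,q) = \hat{B}_n(1,q)$), the next step is to record the generating-function identities $\widehat{H}_0(s,t,q,u) = H_0(1/s, t, q, \sqrt{s}\,u)$ and $\widehat{H}_1(s,t,q,u) = \sqrt{s}\,H_1(1/s, t, q, \sqrt{s}\,u)$; these hold as formal power series in $u$, the formal symbol $\sqrt{s}$ appearing only to even powers in the even series and being cancelled by the prefactor in the odd one, so that every coefficient of a power of $u$ is an honest polynomial in $s$. I would then substitute into the right-hand sides of Theorem~\ref{thm:from_carlitz_scoville_typeb_eulerian_q-anal}, the main thing to watch being how the argument $Mu$ (with $M^2 = (1-s)(1-t)$, the same $M$ used in Theorem~\ref{thm:from_carlitz_scoville_typeb_eulerian_q-anal}) behaves. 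The governing identity is $M(1/s,t)\cdot\sqrt{s} = \sqrt{(1-1/s)(1-t)\,s} = \sqrt{-(1-s)(1-t)} = iM$; since $\cosh_q(Mu)$, $\cosh_B(Mu;q)$ and $\mathrm{e}_q(Mu)\mathrm{e}_q(-Mu)$ are even series in $Mu$ while $\sinh_q(Mu)$ and $\sinh_B(Mu;q)$ carry exactly one leftover factor of $M$, the substitution $s \mapsto 1/s$, $u \mapsto \sqrt{s}\,u$ sends $\cosh_q(Mu) \mapsto \cos_q(Mu)$, $\cosh_B(Mu;q) \mapsto \cos_B(Mu;q)$, $\mathrm{e}_q(Mu)\mathrm{e}_q(-Mu) \mapsto \mathrm{e}_q(iMu)\mathrm{e}_q(-iMu)$, and $\sinh_q(Mu) \mapsto i\sin_q(Mu)$, $\sinh_B(Mu;q) \mapsto i\sin_B(Mu;q)$.

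The last step is the rational-function simplification. After the substitution the common denominator $1 - (s+t)\cosh_q(Mu) + st\,\mathrm{e}_q(Mu)\mathrm{e}_q(-Mu)$ of Theorem~\ref{thm:from_carlitz_scoville_typeb_eulerian_q-anal} becomes $1 - (1/s + t)\cos_q(Mu) + (t/s)\,\mathrm{e}_q(iMu)\mathrm{e}_q(-iMu)$, which after clearing the factor $s$ is exactly the denominator of Theorem~\ref{thm:from_carlitz_scoville_typeb_alt_eulerian_q-anal}; the prefactor $(1-s)$ becomes $(1-1/s)$, that is $(s-1)$ after clearing the same factor; and in the numerator the two factors of $i$ coming from $i\sin_q(Mu)$ and $i\sin_B(Mu;q)$ multiply to $-1$, which is precisely what converts the $+t\sinh_q(Mu)\sinh_B(Mu;q)$ term into the claimed $-t\sin_q(Mu)\sin_B(Mu;q)$ term. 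The odd case is entirely parallel, the extra prefactor $\sqrt{s}$ together with the single leftover factor $i$ from $M(1/s,t) = iM/\sqrt{s}$ combining to leave a plain $-M$ out front. The one genuinely delicate part is this bookkeeping of the factors $\sqrt{s}$ and $i$, needed to make sure the final expressions are manifestly rational in $s$ and $t$ --- everything else is mechanical, and once it is arranged the two displayed formulas fall out. (One could instead specialize the five-variable type~B Carlitz--Scoville generating function of Theorem~\ref{q-egf-type-b-counterpart-pan-zeng} at $s_0 = s$, $s_1 = t_0 = 1$, $t_1 = t$; but since that generating function is itself derived from Theorem~\ref{thm:from_carlitz_scoville_typeb_eulerian_q-anal}, the substitution argument above is the more economical route.)
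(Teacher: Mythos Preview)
Your proof is correct and follows essentially the same approach as the paper: both observe that $\hat{B}_{2k}(s,t,q)=s^{k}B_{2k}(1/s,t,q)$ and $\hat{B}_{2k+1}(s,t,q)=s^{k+1}B_{2k+1}(1/s,t,q)$, deduce $\widehat{H}_0(s,t,q,u)=H_0(1/s,t,q,\sqrt{s}\,u)$ and $\widehat{H}_1(s,t,q,u)=\sqrt{s}\,H_1(1/s,t,q,\sqrt{s}\,u)$, and substitute into Theorem~\ref{thm:from_carlitz_scoville_typeb_eulerian_q-anal}. Your write-up is in fact more explicit than the paper's, spelling out the passage $M(1/s,t)\sqrt{s}=iM$ and the resulting conversion of the hyperbolic $q$-functions to their trigonometric counterparts, which the paper leaves implicit.
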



The proof of Theorem 
\ref{thm:from_carlitz_scoville_typeb_alt_eulerian_q-anal} 
is also given in Subsection \ref{subsec:type_b_gen_fns}.
We move to our counterpart of this result to type D Coxeter groups 
$\DD_n$.    Recall that $\DD_n$ is the subgroup of $\BB_n$ consisting of 
the signed permutations which have an even number of negative 
signs.  We denote $-1$ as $\overline{1}$ and 
for $\pi = \pi_1, \pi_2, \ldots, \pi_n \in \DD_n$, define 
$\pi_{\overline{1}}= -\pi_1$ and let 
$\DES_D(\pi) = \{i \in \lbrace -1,1,\dots,n-1\rbrace : 
\pi_i > \pi_{|i|+1}  \}$ be its set of descents.  Let 
$\des_D(\pi) = |\DES_D(\pi)|$. Moreover, let 
$\OddDES_D(\pi) = \{i \in [-1,n-1]-\{0\}: \pi_i > \pi_{|i|+1} \mbox{ and } 
i \mbox{ is odd}\}$ be the set of odd indices where descents occur in 
$\pi$ and similarly let 
$\EvenDES_D(\pi) = \{i \in [-1,n-1]-\{0\}: \pi_i > \pi_{i+1} \mbox{ and } 
i \mbox{ is even}\}$.  Let $\odes_D(\pi) = |\OddDES_D(\pi)|$ and 
$\edes_D(\pi) = |\EvenDES_D(\pi)|$.
Recall that length in Type D Coxeter groups $\inv_D$ is defined as 
follows (see \cite[Page 302]{petersen-eulerian-nos-book}).
\begin{equation}
\label{eqn:typed-inv-defn}
\inv_D(\pi)=  | \{ 1 \leq i < j \leq n : \pi_i > \pi_j \} |+  
| \{ 1 \leq i < j \leq n : -\pi_i > \pi_j \}|.  
\end{equation}

Remmel in \cite{Remmelaltdes} has given a definition of alternating
descent for type D Coxeter groups based on a total order on the 
elements of $[\pm n]$.  The polynomial that Remmel gets is different
from the one we have. Remmel's main result is a joint distribution 
of alternating descents and alternating major index in type B and D 
Coxeter groups.  Below, we consider a slightly different polynomial
enumerating  alternating descents and type D inversion number 
in $\DD_n$.  Our definition uses the parity of the position of 
descents as before.
Formally, define 
$$D_n(t,q) = \sum_{\pi \in \DD_n} t^{\des_D(\pi)}q^{\inv_D(\pi)}
\mbox{ and }
D_n(s,t,q) = \sum_{\pi \in \DD_n} s^{\edes_D(\pi)}t^{\odes_D(\pi)}
q^{\inv_D(\pi)}.  \mbox{  Define}$$
$$\mathcal{D}_0(s,t,q,u)=\sum_{k > 0} D_{2k}(s,t,q)\frac{u^{2k}}{D_{2k}(1,q)},
\mathcal{D}_1=
\sum_{k > 0} D_{2k+1}(s,t,q)\frac{u^{2k+1}}{D_{2k+1}(1,q)}.$$

\magenta{Define $\hat{D}_{n}(s,t,q)=
\sum_{\pi \in \DD_{n}} t^{\odes_D(\pi)} s^{\easc_D(\pi)}q^{\inv_D(\pi)}$  
and let 
\begin{equation}
\label{eqn:defn_four_var_egf_type-D}
\chD_0(s,t,q,u) = 
\sum_{n\geq 1} \hat{D}_{2n}(s,t,q)\frac{u^{2n}}{D_{2n}(1,q)},
\hspace{1 mm}   
\chD_1(s,t,q,u)= 
\sum_{n\geq 1} \hat{D}_{2n+1}(s,t,q)\frac{u^{2n+1}}{D_{2n+1}(1,q)}.
\end{equation}
}

Moreover, let
$\displaystyle \exp_D(u;q)=\sum_{n\geq 0} \frac{u^n}{D_n(1,q)}.$
We split its odd and even parts and write
$$\cosh_D(u;q)=\frac{\exp_D(u;q)+\exp_D(-u;q)}{2} 
\hspace{5 mm}  \mbox{ and } \hspace{5 mm} 
\sinh_D(u;q)=\frac{\exp_D(u;q)-\exp_D(-u;q)}{2}.$$

Recalling $M$ from \eqref{eqn:defn_M}, let
\begin{eqnarray*}
\mathrm{OD}& = & ut^2(\cosh_q(Mu)-1) +\frac{(1-t)M}{(1-s)}
(\sinh_{D}(Mu;q)-Mu)+\frac{2t(1-t)}{M}(\sinh_q(Mu)-Mu), \\ 
\mathrm{ED}& = & 2t(\cosh_q(Mu)-1)
+(1-t)(\cosh_{D}(Mu;q)-1)+\frac{ut^2(1-s)}{M}\sinh_q(Mu).
\end{eqnarray*}
For type D Coxeter groups, our main results are 
the following.
\begin{theorem}
\label{thm:carlitz_scoville_typed_eulerian_q-anal}
We have the egfs
\begin{eqnarray}
\mathcal{D}_0(s,t,q,u)=\frac{\mathrm{ED}(1-t\cosh_q(Mu))+\mathrm{OD}(\frac{t(1-s)}{M}\sinh_q(Mu))}{1-(s+t)\cosh_q(Mu)+st\mathrm{e}_q(Mu)\mathrm{e}_q(-Mu)}, \\ 
\label{eqn:from_carlitz_scoville_typed_alteulerian_q-anal_even}
 \mathcal{D}_1(s,t,q,u)=\frac{\mathrm{OD}(1-s\cosh_q(Mu))+\mathrm{ED}(\frac{s(1-t)}{M}\sinh_q(Mu))}{1-(s+t)\cosh_q(Mu)+st\mathrm{e}_q(Mu)\mathrm{e}_q(-Mu)}. 
\label{eqn:from_carlitz_scoville_typeb_alteulerian_q-anal_odd}
\end{eqnarray}
\end{theorem}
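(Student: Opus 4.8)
The plan is to reduce Theorem~\ref{thm:carlitz_scoville_typed_eulerian_q-anal} to the already-established type~B generating functions (Theorem~\ref{thm:from_carlitz_scoville_typeb_eulerian_q-anal}) by first proving a bivariate $q$-analogue of Hyatt's recurrence for the polynomials $D_n(s,t,q)$, and then converting that recurrence into a coupled pair of first-order differential equations for $\mathcal{D}_0(s,t,q,u)$ and $\mathcal{D}_1(s,t,q,u)$.

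\emph{Step 1 (a $q$-Hyatt recurrence in type~D).} I would delete the letter of largest absolute value from $\pi \in \DD_n$. Since $|\DD_n| = n\,|\BB_{n-1}|$, this produces a signed permutation of $[\pm(n-1)]$ together with the position $j \in \{1,\dots,n\}$ from which it was removed, the sign of the deleted letter being forced by the parity condition defining $\DD_n$; when the deleted letter is negative one compensates by flipping one further sign so as to remain inside type~D. One then records how $\edes_D$, $\odes_D$, and $\inv_D$ change: because $\pm n$ is extremal, reinserting it at position $j$ shifts $\inv_D$ by a controlled amount, producing the $q$-factors that convert between the normalizations $D_n(1,q)$ and $D_{n-1}(1,q)$ (here one uses the standard product formulas, equivalently $B_n(1,q)=(1+q^n)D_n(1,q)$). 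Two points make this nontrivial: prepending a new letter shifts the parity of every interior position, so the recurrence interchanges the roles of $s$ and $t$ at each step, which is exactly what forces the even/odd split; and the special type~D descent at index $-1$, governed by $-\pi_1 > \pi_2$ on the \emph{two} leading entries, makes insertions into positions~$1$ and~$2$ behave differently from generic insertions.

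\emph{Step 2 (solving the system).} Multiplying the recurrence by $u^n/D_n(1,q)$ and summing, the $s\leftrightarrow t$ interchange couples $\mathcal{D}_0$ and $\mathcal{D}_1$. The kernels $\mathrm{e}_q,\cosh_q,\sinh_q$ arise from the insertion step (after the relevant $q$-binomial identities), while $\cosh_D,\sinh_D$ arise from re-expanding the residual $\DD_{n-1}$-sums against the weight $1/D_n(1,q)$. One obtains a $2\times2$ inhomogeneous linear system in $\mathcal{D}_0,\mathcal{D}_1$ whose coefficient matrix has determinant $1-(s+t)\cosh_q(Mu)+st\,\mathrm{e}_q(Mu)\mathrm{e}_q(-Mu)$ — the same determinant as in the type~B case, because the homogeneous part of the recurrence is insensitive to whether the ``low'' descent sits at index~$0$ or index~$-1$; this is why the two theorems share a denominator. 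Solving by Cramer's rule gives the stated common denominator, and after using $\cosh_q^2(x)-\sinh_q^2(x)=\mathrm{e}_q(x)\mathrm{e}_q(-x)$ together with the definitions of $M$, $\mathrm{OD}$, and $\mathrm{ED}$, the numerators collapse to $\mathrm{ED}\big(1-t\cosh_q(Mu)\big)+\mathrm{OD}\cdot\frac{t(1-s)}{M}\sinh_q(Mu)$ and its counterpart. A final check on the first few coefficients (with $\DD_1=\{\id\}$) confirms the ``$-1$'' and ``$-Mu$'' correction terms inside $\mathrm{OD}$ and $\mathrm{ED}$, rather than merely terms consistent with the recurrence modulo a homogeneous solution.

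\emph{Main obstacle.} The delicate step is Step~1. Because the special type~D descent depends on the \emph{two} leading entries, the insertion/deletion argument branches on the signs of $\pi_1$ and $\pi_2$ and on whether $\pm n$ lands in position~$1$, position~$2$, or later; extracting the precise inhomogeneous terms — in particular the ``$\cosh_q(Mu)-1$'' and ``$\sinh_q(Mu)-Mu$''-type corrections that make up $\mathrm{OD}$ and $\mathrm{ED}$, and which encode the degenerate behavior of $\DD_n$ for small $n$ — is where the bookkeeping has to be done carefully rather than routinely.
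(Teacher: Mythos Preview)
Your proposal diverges from the paper's argument and, as written, has a real gap in Step~1.

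\textbf{What the paper actually does.} The paper does \emph{not} reduce the type~D theorem to the type~B one; it runs a parallel, self-contained argument. The decomposition is by the maximal increasing \emph{suffix}: one defines $H_{n,i}=\{\pi\in\DD_n:\pi_{i+1}<\cdots<\pi_n\}$, proves (via a bijection $f_{\DD}:\DD_{n-i}\times\sgnbinom{[n]}{i}\to H_{n,n-i}$ and a $q$-binomial lemma) that the $(\odes_D,\edes_D,\inv_D)$-weighted sum over $H_{n,i}$ satisfies a one-step relation to the sum over $H_{n,i-1}$, and iterates this down to $i=1$, where a separate small lemma handles the special set $X_{\{1,\overline{1}\}}$ of $\pi$ with $\DES_D(\pi)\subseteq\{1,\overline{1}\}$. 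That iteration is what produces the recurrence (Theorem~\ref{thm:typed_q_rec}) and, in particular, the correction terms that become $\mathrm{OD}$ and $\mathrm{ED}$. Summing against $u^n/D_n(1,q)$ gives a single linear equation in $\mathcal{D}_0,\mathcal{D}_1$; the second equation is obtained by the parity trick $u\mapsto -u$, not by differential equations, and one then solves the $2\times2$ system.

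\textbf{Where your Step~1 breaks.} Your plan is to delete the letter of largest absolute value and record its position $j$. For the parity-refined statistics this is the wrong decomposition: removing an entry at position $j$ leaves the parities of positions $1,\ldots,j-1$ unchanged while shifting the parities of positions $j+1,\ldots,n$. Hence $\odes_D$ and $\edes_D$ of the resulting word are not simply $\odes_D$ and $\edes_D$ of a smaller $\DD_{n-1}$-permutation with $s\leftrightarrow t$; they are a \emph{position-dependent} mixture. Your sentence ``prepending a new letter shifts the parity of every interior position'' already conflicts with ``delete the letter of largest absolute value'' (which is at an arbitrary position), and the clean $s\leftrightarrow t$ interchange you rely on in Step~2 only holds for insertions at one end, not at a generic $j$. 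This is precisely why the paper peels from the right: removing an increasing suffix does not disturb the parities of the remaining positions, so $\odes_D,\edes_D$ of the prefix are unchanged. Without fixing this, your recurrence will not close up over the family $D_m(s,t,q)$, and the kernels $\cosh_q,\sinh_q$ will not emerge in the way you describe.

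If you want to salvage your outline, replace the ``delete the largest letter'' step by the suffix-peeling argument (or, dually, a prefix-peeling argument that genuinely always inserts at position~$1$); the special type~D boundary behaviour you correctly anticipate then appears exactly once, at the bottom of the iteration, and yields the inhomogeneous pieces that assemble into $\mathrm{OD}$ and $\mathrm{ED}$.
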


\blue{
\begin{theorem}
\label{thm:type-d-alt-eul}
We have the egfs
 \begin{eqnarray}
\chD_0(s,t,q,u)
& =& \frac{T'(\mathrm{ED})(1-t\cos_q(Mu))-T'(\mathrm{OD})
(\frac{t(s-1)\sqrt{s}}{Ms}\sin_q(Mu))}{s-(st+1)
\cos_q(Mu)+t\mathrm{e}_q(iMu)\mathrm{e}_q(-iMu)},
\\
\chD_1(s,t,q,u)
& = & \frac{\frac{T'(\mathrm{OD})}{\sqrt{s}}(s-\cos_q(Mu))-T'(\mathrm{ED})
(\frac{(1-t)}{M}\sin_q(Mu))}{s-(st+1)\cos_q(Mu)+t\mathrm{e}_q(iMu)\mathrm{e}_q(-iMu)}.
\end{eqnarray}
where
\begin{eqnarray*}
T'(\mathrm{OD})& =& \sqrt{s}ut^2(\cos_q(Mu)-1)-\frac{(1-t)M}{(s-1)\sqrt{s}}
(\sin_{D}(Mu;q)-Mu)\\
& & +\frac{2t(1-t)\sqrt{s}}{M}(\sin_q(Mu)-Mu), \\
T'(\mathrm{ED})& =& 2t(\cos_q(Mu)-1)+\frac{ut^2(s-1)\sqrt{s}}{sM}\sin_q(Mu)
+\frac{(1-t)}{t}(\cosh_{D}(Mu;q)-1).
\end{eqnarray*}
\end{theorem}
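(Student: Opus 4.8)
The plan is to deduce Theorem~\ref{thm:type-d-alt-eul} from Theorem~\ref{thm:carlitz_scoville_typed_eulerian_q-anal} by the same complementation mechanism that passes from Theorem~\ref{thm:from_carlitz_scoville_typeb_eulerian_q-anal} to Theorem~\ref{thm:from_carlitz_scoville_typeb_alt_eulerian_q-anal}. The first step is a combinatorial identity. For $\pi\in\DD_n$ the descent indices lie in $\{-1,1,2,\dots,n-1\}$, whose even members form $\{2,4,\dots\}$; write $E_n$ for their number, so $E_{2n}=n-1$ and $E_{2n+1}=n$ (the index $-1$ is odd and $0$ is not an index). For each even index $i\ge 2$ the entries $\pi_i,\pi_{i+1}$ of a signed permutation have distinct absolute values, hence $\pi_i\ne\pi_{i+1}$, so $i$ is an even descent or an even ascent but not both; therefore $\easc_D(\pi)=E_n-\edes_D(\pi)$, i.e. $s^{\easc_D(\pi)}=s^{E_n}(1/s)^{\edes_D(\pi)}$, for every $\pi\in\DD_n$. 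Since $\odes_D$ and $\inv_D$ are unchanged, summing $t^{\odes_D(\pi)}s^{\easc_D(\pi)}q^{\inv_D(\pi)}$ over $\DD_n$ gives
\begin{equation}
\label{eq:typed-alt-complement}
\hat{D}_n(s,t,q)=s^{E_n}\,D_n(1/s,t,q).
\end{equation}

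Next I translate \eqref{eq:typed-alt-complement} into a relation between generating functions. Since $D_m(1,q)$ is the same normalisation in the $\mathcal{D}$- and the $\chD$-series, and since $s^{n-1}u^{2n}=s^{-1}(\sqrt{s}\,u)^{2n}$ and $s^{n}u^{2n+1}=s^{-1/2}(\sqrt{s}\,u)^{2n+1}$, feeding \eqref{eq:typed-alt-complement} (with $n$ replaced by $2n$, resp. $2n+1$) into the definitions of $\chD_0$ and $\chD_1$ yields
\begin{equation}
\label{eq:typed-alt-egf}
\chD_0(s,t,q,u)=\frac{1}{s}\,\mathcal{D}_0(1/s,t,q,\sqrt{s}\,u),
\qquad
\chD_1(s,t,q,u)=\frac{1}{\sqrt{s}}\,\mathcal{D}_1(1/s,t,q,\sqrt{s}\,u).
\end{equation}

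It remains to substitute the closed forms of Theorem~\ref{thm:carlitz_scoville_typed_eulerian_q-anal} into \eqref{eq:typed-alt-egf}. Under $s\mapsto 1/s$ one has $\alpha=(1-s)(1-t)\mapsto(1-1/s)(1-t)=-\alpha/s$, so the symbol $M$ with $M^2=\alpha$ is replaced by a $\tilde M$ with $\tilde M^2=-\alpha/s$, whence $\tilde M\cdot(\sqrt{s}\,u)=iMu$. From the definitions of the $q$- and type-D exponentials one gets, for $\ast\in\{q,D\}$, the identities $\cosh_\ast(iMu)=\cos_\ast(Mu)$ and $\sinh_\ast(iMu)=i\sin_\ast(Mu)$, and $\mathrm{e}_q(iMu)\mathrm{e}_q(-iMu)$ is the image of $\mathrm{e}_q(Mu)\mathrm{e}_q(-Mu)$. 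Hence the common denominator $1-(s+t)\cosh_q(Mu)+st\,\mathrm{e}_q(Mu)\mathrm{e}_q(-Mu)$ is sent to $1-(1/s+t)\cos_q(Mu)+(t/s)\,\mathrm{e}_q(iMu)\mathrm{e}_q(-iMu)$, and after clearing the factor $1/s$ in \eqref{eq:typed-alt-egf} this rescales to $s-(st+1)\cos_q(Mu)+t\,\mathrm{e}_q(iMu)\mathrm{e}_q(-iMu)$, the denominator occurring in Theorem~\ref{thm:type-d-alt-eul}. Applying the same substitution to the numerators $\mathrm{ED}$ and $\mathrm{OD}$, then absorbing the residual powers of $\sqrt{s}$, of $M$ and of $i$ and invoking $M^2=\alpha$ once, carries $\mathrm{ED},\mathrm{OD}$ to the expressions named $T'(\mathrm{ED}),T'(\mathrm{OD})$ in the statement; collecting the terms of \eqref{eq:typed-alt-egf} then reproduces the two asserted formulas.

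The substantive obstacle is this last simplification. One must verify that $s\mapsto 1/s$, $u\mapsto\sqrt{s}\,u$, followed by the reduction $M^2=\alpha$, sends the (somewhat long) $\mathrm{ED},\mathrm{OD}$ precisely onto $T'(\mathrm{ED}),T'(\mathrm{OD})$ — in particular that every surviving odd power of $M$ comes paired with a factor $u$ (so the answer is a genuine power series in $u$) and that the signs together with the factors $i,\sqrt{s}$ distribute as claimed; this is exactly the place where the type-D numerators, with their $\frac{1}{1-s}$ and $\frac{1}{M}$ factors, interact non-trivially with the inversion $s\mapsto 1/s$. As an independent check I would expand the $u^2,u^3,u^4$ coefficients of both sides of each identity from $\hat{D}_n(s,t,q)$ for $n=2,3,4$. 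By contrast the complementation identity \eqref{eq:typed-alt-complement} and the passage \eqref{eq:typed-alt-egf} to generating functions are routine once the parity of the index $-1$ and the absence of $0$ from the type D descent set are kept straight.
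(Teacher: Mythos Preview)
Your approach is essentially the paper's own: reduce $\hat D_n$ to $D_n$ via the even-ascent/even-descent complementation $\hat D_n(s,t,q)=s^{E_n}D_n(1/s,t,q)$ and then substitute $s\mapsto 1/s$, $u\mapsto\sqrt{s}\,u$ into Theorem~\ref{thm:carlitz_scoville_typed_eulerian_q-anal}. Your exponents $E_{2k}=k-1$, $E_{2k+1}=k$ (and hence the prefactors $1/s$, $1/\sqrt{s}$ in your generating-function identity) are in fact the correct type-D values, consistent with the $s_1/s_0$ and $\sqrt{s_1}/\sqrt{s_0}$ prefactors in Theorem~\ref{thm:pan_zeng_four_var_type-d-version}; the paper's proof sketch quotes $s^{k}$ and $s^{k+1}$, evidently carrying over the type-B exponents.
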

}

The proof of Theorem
\ref{thm:carlitz_scoville_typed_eulerian_q-anal} and Theorem
\ref{thm:type-d-alt-eul}
appear in Subsection \ref{subsec:type_d_gen_fns}.
It can be checked that Theorem 
\ref{thm:carlitz_scoville_typed_eulerian_q-anal}
refines a result of Reiner
\cite[Corollary 4.5]{reiner-descents-weyl}
for type D Euler-Mahonian polynomials.
\magenta{Our proofs in both the type B and type D cases use an 
inclusion-exclusion based argument.}


\subsection{Refining Hyatts recurrences for the Type B and Type 
D Eulerian polynomial}
As an outcome of our proofs, we get a refinement of Hyatt's 
recurrence for the type B and type D Eulerian polynomials.  
Hyatt in \cite{hyatt-recurrences_eulerian_typeBD} gave
the following recurrences for Eulerian polynomials of 
types B.   We partition $\BB_n$ based on the sign of
the last element.  Define $\BB_n^+ = \{\pi \in \BB_n:
\pi_n > 0 \}$ contain the elements of $\BB_n$ with last
element being positive and let $\BB_n^- = \BB_n - \BB_n^+$.
Define 
$B_n^+(t) = \sum_{\pi \in \BB_n^+} t^{\des_B(\pi)}$.
The following result is due to Hyatt.

\begin{theorem}[Hyatt]
\label{thm:hyatt_recurrences}
For integers $n \geq 1$, we have
\begin{eqnarray*}
B_n^+(t) & = & \sum_{k=0}^{n-1} \binom{n}{k} B_k(t) (t-1)^{n-k-1}.
\end{eqnarray*}
\end{theorem}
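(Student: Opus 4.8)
The plan is to prove the identity by a direct combinatorial decomposition of $\BB_n^+$ according to the position of the largest-magnitude entry (or, equivalently, where the ``unsigned suffix'' of the permutation begins), mirroring the classical argument for the ordinary Eulerian polynomials but carefully tracking the type B descent at position $0$. First I would recall that for $\pi \in \BB_n$ the descent set lives in $[n-1]_0 = \{0,1,\dots,n-1\}$, where $0$ is a descent iff $\pi_1 < 0$ (since $\pi_0 = 0$). The key structural observation is this: given $\pi \in \BB_n^+$, let $k \in \{0,1,\dots,n-1\}$ be the largest index such that $\pi_{k+1}, \pi_{k+2}, \dots, \pi_n$ is the increasing rearrangement of the $n-k$ largest values among $\{|\pi_1|,\dots,|\pi_n|\}$ appearing with positive sign; then the prefix $\pi_1,\dots,\pi_k$, after standardization, is an arbitrary element of $\BB_k$, and the ``gluing'' index contributes descents in a controlled way.

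The main steps, in order: (1) Set up the bijection $\BB_n^+ \to \bigsqcup_{k=0}^{n-1} (\text{choice of } k\text{-subset}) \times \BB_k \times (\text{filler data for the suffix})$. Since the suffix $\pi_{k+1} < \cdots < \pi_n$ is forced once we know its value set and that all are positive, the suffix contributes no descents internally; the only new descent possibly created is at position $k$ (between $\pi_k$ and $\pi_{k+1}$), which occurs for exactly the ``non-maximal'' configurations — this is where the $(t-1)^{n-k-1}$ comes from, via an inclusion-exclusion / sign-reversing bookkeeping over the $n-k-1$ ``internal'' gaps of the suffix that could have been cut but weren't, each contributing a factor of $(t-1)$ rather than $t$. (2) Choosing which $k$ of the $n$ positions (equivalently values) go into the standardized prefix: $\binom{n}{k}$ ways; the prefix, as an element of $\BB_k$, contributes $B_k(t)$; note crucially that requiring $\pi_n > 0$ is automatic here since the suffix values are taken with positive sign, and the prefix's own ``position $0$'' descent is inherited correctly because sign data is preserved under standardization. (3) Assemble: $B_n^+(t) = \sum_{k=0}^{n-1} \binom{n}{k} B_k(t)(t-1)^{n-k-1}$, reading off the three factors.

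The hard part will be step (1): pinning down precisely which descent at the junction position $k$ is created and justifying that summing over ``where the genuinely-maximal increasing positive run starts'' versus ``where we merely cut'' produces exactly $(t-1)^{n-k-1}$ and not $t^{n-k-1}$ or something else — this requires the standard but delicate telescoping argument (one writes $t^{j} = \sum_i \binom{j}{i}(t-1)^i$-type manipulation, or more cleanly a sign-reversing involution on over-counted cuts) and one must check it interacts correctly with the type B feature that index $0$ is a legitimate descent position governed by the sign of $\pi_1$. A secondary subtlety is the boundary term $k = n-1$: there $(t-1)^0 = 1$, corresponding to $\pi_n$ being the unique forced positive maximum with the rest an arbitrary element of $\BB_{n-1}$ and no junction descent, which one should verify separately as a sanity check. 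Everything else — the binomial count and the appearance of $B_k(t)$ — is routine once the junction analysis is in place.
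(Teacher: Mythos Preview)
Your high-level plan---decompose $\BB_n^+$ by a suffix and telescope to produce the $(t-1)$ factors---matches the paper's approach in spirit, but your chosen implementation has a concrete obstruction that would stop the argument from closing.

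The paper (proving the generalization Theorem~\ref{thm:typeb_plus}, then setting $s=t$ and $q=1$) uses \emph{decreasing} positive suffixes: it sets $\hat A_r = \{\pi \in \BB_n : \pi_{n-r} > \cdots > \pi_n > 0\}$, so that $\hat A_0 = \BB_n^+$, and obtains $\binom{n}{r}B_{n-r}(t)t^r = tA_{r-1}-(t-1)A_r$, which telescopes to the claim. The telescope closes precisely because, for $\pi \in \hat A_{r-1}$, one has $\pi \in \hat A_r$ \emph{if and only if} position $n-r$ is a descent: $\pi_{n-r} > \pi_{n-r+1} > 0$ forces $\pi_{n-r} > 0$ automatically, so the ``extend the suffix'' condition and the ``descent at the junction'' condition coincide. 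Your increasing-positive-suffix version lacks this equivalence. With $\tilde G_k = \{\pi : 0 < \pi_{k+1} < \cdots < \pi_n\}$, membership in $\tilde G_{k-1}$ requires both $\pi_k < \pi_{k+1}$ \emph{and} $\pi_k > 0$, whereas ``$k$ is an ascent'' requires only the first; when $\pi_k < 0$ these diverge and the recurrence you need does not hold. Separately, your description is internally inconsistent: insisting the suffix consist of the $n-k$ \emph{largest} absolute values eliminates the $\binom{n}{k}$ choice you also invoke, and in any case fails to cover all of $\BB_n^+$ (e.g.\ $\pi = 3,1,2$ admits no valid $k$ under your rule). The fix is exactly the paper's: switch to decreasing positive suffixes, and then the junction analysis you outline goes through cleanly.
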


Our extension of Theorem \ref{thm:hyatt_recurrences} involves the 
following polynomial.    Define
\begin{equation}
\label{defn:poly_pm_like_hyatt}
B_n^{\pm}(s,t,q) = \sum_{\pi \in \BB_n^{\pm}}s^{\edes_B(\pi)} t^{\odes_B(\pi)}
q^{\invb(\pi)} .
\end{equation}
Our type B generalization is the following.

\begin{theorem}
\label{thm:typeb_plus}
For even positive integers $n$, we have
\begin{eqnarray}
\label{eqn:typeb_plus_biv_even}
B_n^+(s,t,q) & = & 
\sum_{r=0}^{\frac{n}{2}-1} q^{\binom{2r+1}{2}}\qbinom{n}{2r+1} 
B_{n-2r-1}(s,t,q)(s-1)^r (t-1)^r \nonumber \\
\label{eqn:typeb_hyatt_multivariate_even}
  & & + \sum_{r=1}^{\frac{n}{2}} q^{\binom{2r}{2}}\qbinom{n}{2r} B_{n-2r}(s,t,q)(s-1)^{r-1} (t-1)^r.
\end{eqnarray}
For odd positive integers $n$, we have
\begin{eqnarray}
\label{eqn:typeb_plus_biv_odd}
B_n^+(s,t,q) & = & 
\sum_{r=0}^{\lfloor \frac{n}{2}\rfloor} q^{\binom{2r+1}{2}}\qbinom{n}{2r+1} 
B_{n-2r-1}(s,t,q)(s-1)^r (t-1)^r \nonumber \\ 	
\label{eqn:typeb_hyatt_multivariate_odd}
& & + \sum_{r=1}^{\lfloor \frac{n}{2} \rfloor} q^{\binom{2r}{2}}\qbinom{n}{2r} B_{n-2r}(s,t,q)(s-1)^{r} (t-1)^{r-1}.
\end{eqnarray}
\end{theorem}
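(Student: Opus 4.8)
The plan is to prove Theorem~\ref{thm:typeb_plus} by peeling a terminal increasing run off the elements of $\BB_n^+$ and running exactly the inclusion--exclusion that drives the egf computations for Theorems~\ref{thm:from_carlitz_scoville_typeb_eulerian_q-anal} and~\ref{thm:from_carlitz_scoville_typeb_alt_eulerian_q-anal}. Fix $\pi=\pi_1\cdots\pi_n\in\BB_n^+$; since $\pi_0=0<\pi_n$ (the latter because $\pi\in\BB_n^+$), the word has a well-defined maximal terminal increasing run $\pi_{n-L+1}<\cdots<\pi_n$, and position $n-L$ is a descent whenever $n-L\ge 0$. Rather than only use this maximal $L$, I would record a decomposition for \emph{every} $\ell$ with $1\le\ell\le L$: standardise the prefix $\pi_1\cdots\pi_{n-\ell}$ (keep signs, relabel absolute values order-preservingly) to some $\sigma\in\BB_{n-\ell}$, and keep the last $\ell$ letters as a labelled suffix that is increasing with a positive last entry. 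The map $(\pi,\ell)\mapsto(\sigma,\text{suffix})$ is then easily seen to be a bijection onto the set of such decorated pairs, with the fibre over a genuine $\pi\in\BB_n^+$ having size $L(\pi)$. This overcount is corrected by an inclusion--exclusion that attaches to each of the $\ell-1$ internal gaps of the suffix a weight $(s-1)$ if that gap sits at an even position and $(t-1)$ if it sits at an odd position; since $(s-1)+1=s$ and $(t-1)+1=t$ at every gap, the resulting signed sum telescopes back to $B_n^+(s,t,q)$. What remains is to evaluate, for each fixed suffix length $\ell$, the contribution of all decorated pairs with that $\ell$, and this is what should reproduce the two displayed sums.

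For a fixed $\ell$, the descent at position $n-\ell$ (present precisely when $\ell$ is the maximal run length) contributes a single factor $s$ or $t$ according to the parity of $n-\ell$, and is absorbed into $B_{n-\ell}(s,t,q)$ after checking that $\edes_B(\pi),\odes_B(\pi)$ equal $\edes_B(\sigma),\odes_B(\sigma)$ plus this one forced descent (positions $n-\ell+1,\dots,n-1$ of $\pi$ are ascents of the run). The decisive structural point is parity bookkeeping: removing a suffix of \emph{even} length $\ell$ leaves the parity of every position $\le n-\ell$ unchanged, while removing a suffix of \emph{odd} length reverses all those parities; this is exactly why the recursion splits into a sum over $\ell=2r+1$ and a sum over $\ell=2r$, and why the roles of $s$ and $t$ get interchanged in the first of these. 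Finally, the $\ell-1$ internal gaps of a length-$\ell$ suffix occupy the $\ell-1$ consecutive positions $n-\ell+1,\dots,n-1$: among $2r$ consecutive positions exactly $r$ are even and $r$ odd, yielding $(s-1)^r(t-1)^r$ when $\ell=2r+1$; among the $2r-1$ consecutive positions starting at $n-2r+1$ the even/odd split is $(r-1,r)$ if $n$ is even and $(r,r-1)$ if $n$ is odd, yielding $(s-1)^{r-1}(t-1)^r$ respectively $(s-1)^r(t-1)^{r-1}$ when $\ell=2r$. This is precisely the asymmetry between the even-$n$ and odd-$n$ statements of the theorem.

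The real work is the $q$-weight. One must show that $\sum q^{\inv_B(\pi)-\inv_B(\sigma)}$, summed over all admissible labelled suffixes of length $\ell$ for a fixed $\sigma$, equals $q^{\binom{\ell}{2}}\qbinom{n}{\ell}$ — so that the suffix sum factors out of $\sum_\sigma q^{\inv_B(\sigma)}(\cdots)=B_{n-\ell}(s,t,q)$ cleanly. Writing $\inv_B$ as (type-1 coinversions)$+$(type-2 coinversions)$+|\Negs|$ as in \eqref{eqn:typeb-inv-defn}, the added amount splits into a part internal to the suffix (which, because the suffix is increasing with positive last entry, depends only on its sign pattern and chosen absolute-value set) and the cross part between prefix and suffix. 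The apparent dependence of the cross part on the signs of $\sigma$ washes out: already for $\ell=1$, a positive last letter equal to the $j$-th largest absolute value contributes $\#\{i:\pi_i>\pi_n\}+\#\{i:-\pi_i>\pi_n\}=\#\{\text{prefix entries of absolute value}>j\}=n-j$ regardless of signs, so the suffix sum is $\sum_{j=1}^{n}q^{n-j}=\qbinom{n}{1}=q^{\binom{1}{2}}\qbinom{n}{1}$. For general $\ell$ the same cancellation, combined with a $q$-Vandermonde-type summation over the choice of absolute-value set and over the constrained sign patterns, should give the power $q^{\binom{\ell}{2}}$ (coming from the minimal suffix-internal contribution) times $\qbinom{n}{\ell}$; establishing this identity in full is, I expect, the only genuinely technical step. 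As a consistency check I would verify that setting $s=t=1$ and ignoring $q$ collapses both displayed sums to $\sum_{k=0}^{n-1}\binom{n}{k}B_k(t)(t-1)^{n-k-1}$, recovering Theorem~\ref{thm:hyatt_recurrences}.
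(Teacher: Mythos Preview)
There is a genuine gap: you peel off \emph{increasing} signed suffixes (the mechanism behind Theorem~\ref{thm:typeb_q_rec}), but the identity you want requires \emph{descending positive} suffixes, and these two decompositions produce different $q$-weights and a different telescoping. Your claim that $\sum_{\text{suffix}} q^{\inv_B(\pi)-\inv_B(\sigma)} = q^{\binom{\ell}{2}}\qbinom{n}{\ell}$ is false for your suffixes: already at $n=\ell=2$ the increasing length-$2$ words on $[\pm 2]$ with positive last entry are $(1,2),(-1,2),(-2,1)$ with $\inv_B$ equal to $0,1,2$, so the sum is $1+q+q^2$, not $q^{\binom{2}{2}}\qbinom{2}{2}=q$. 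Signed increasing suffixes carry the weight of Lemma~\ref{thm:combin-typeb-coeff}, with its extra $(1+q^j)$ factors, and that is what feeds Theorem~\ref{thm:typeb_q_rec}, not this theorem. Your inclusion--exclusion also fails to close: the internal gaps of an increasing suffix are \emph{ascents}, contributing weight $1$ to the descent polynomial, so there is no ``$(x-1)+1=x$'' to expand; and the possible descent at position $n-\ell$ is not ``absorbed into $B_{n-\ell}(s,t,q)$'' because $\sigma\in\BB_{n-\ell}$ only records positions $0,\dots,n-\ell-1$. For instance $\pi=(2,1)\in\BB_2^+$ has $L(\pi)=1$, and your single term $\ell=1$ contributes $W(1)\cdot s^{\edes_B(\sigma)}t^{\odes_B(\sigma)}=1$ rather than the required $t$.

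The paper's fix is to let $\hat A_k\subseteq\BB_n$ be the permutations whose last $k+1$ letters are \emph{positive and descending}; then $\hat A_0=\BB_n^+$, and the natural bijection is $\BB_{n-\ell}\times\binom{[n]}{\ell}\to\hat A_{\ell-1}$, with an \emph{unsigned} subset written in decreasing order as the suffix. Now the suffix-internal inversions contribute exactly $q^{\binom{\ell}{2}}$, the cross part gives $\qbinom{n}{\ell}$ (your observation that the cross contribution is sign-blind is correct and is precisely what is used here), and every internal gap of the suffix is a genuine descent contributing an $s$ or $t$. The only ambiguous position is $n-\ell$, and comparing $\hat A_{\ell-1}$ with $\hat A_{\ell}$ there yields recurrences of the form $q^{\binom{\ell}{2}}\qbinom{n}{\ell}B_{n-\ell}(s,t,q)\,s^{a}t^{b} = x\,A_{\ell-1}-(x-1)A_{\ell}$ with $x\in\{s,t\}$ and $A_k=\sum_{w\in\hat A_k}s^{\edes_B(w)}t^{\odes_B(w)}q^{\inv_B(w)}$; after dividing through and reindexing, these telescope down to $A_0=B_n^+(s,t,q)$.
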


It is clear that setting $q=1$ and $s=t$ in Theorem \ref{thm:typeb_plus} 
gives us Theorem \ref{thm:hyatt_recurrences}. 
The proof of Theorem \ref{thm:typeb_plus} 
appears in Subsection \ref{subsec:hyatt_type_b_q_anal}. 
For Type D Coxeter groups, our analogous result 
is Theorem \ref{thm:typed_plus}. 

\subsection{More consequences}
\magenta{Another outcome of our results are some symmetry relations.
For the type B case, our results are Theorem 
\ref{thm:typeb_minus}
and Lemma \ref{thm: type_b_reciprocal}.  For the type D case, 
our symmetry results are Theorem \ref{thm:typed_minus}
and Corollary \ref{thm: type_d_reciprocal}.
}

From the $q$-analogue of our generating function, we 
naturally get a $q$-analogue of the enumeration of type B and 
type D snakes.  These results are presented in Section
\ref{sec:snakes}.  Enumeration of type B and D snakes with respect
to some statistics and thus $q$-analogues have been obtained, 
see for example, Verges \cite{verges-enum-snakes-cycle-alt}.  
However, to the best of our knowledge, we have not seen $q$-analogues 
involving the appropriate length function in these groups.

\section{Type B results}
\label{sec:type-b-results}
Recall that $\BB_n$ is the set of permutations of 
$[\pm n] = \{\pm 1, \pm 2,\ldots,\pm n\}$ satisfying $\pi(-i)= - \pi(i)$. 
We think of $\pi$ as a word $\pi = \pi_0, \pi_1, \pi_2, \ldots, \pi_n$ where 
$\pi_i = \pi(i)$ and $\pi_0=0.$  



For positive integers $n$ and an integer $i$ with $0 \leq i \leq n$, 
let  $\binom{[n]}{i}= \{ A \subseteq [n]: |A|=i \}$ be 
the set of subsets of $[n]$ with cardinality $i$.
We define a signed subset $(A,\epsilon)$ to be a 
subset $A\subseteq [n]$ and $\epsilon$ is a string of 
signs $\pm$ of length $|A|$.  Here, each element $a_i \in A$ has either a
positive or a negative sign, encoded by $\epsilon_i$, attached to it. 
When $a \in A$, we denote a positive signed $a$ just by $a$ and a negative 
signed $a$ by $\overline{a}$. The set of all signed subsets of 
size $i$ of $[n]$ will be denoted as $\sgnbinom{[n]}{i}$. 
Clearly, $|\sgnbinom{[n]}{i}| = 2^i\binom{n}{i}$.

Let $G_{n,i}$ be the set of signed permutations $\pi \in \BB_n$ 
such that the last $n-i$ elements of $\pi$ are increasing, 
that is we have 
$\pi_{i+1} < \pi_{i+2} < \dots <\pi_{n-1} < \pi_n.$ 
It is easy to see that $|G_{n,i}|= 2^n\binom{n}{i}i!.$
Define $G_{n,-1}$ to be the signed permutation $\pi=0,1,2, \cdots, n.$, the signed permutation whose $n+1$ elements are increasing.

Let $\sigma=0, \sigma_1, \cdots, \sigma_{n-i} \in \BB_{n-i}$ 
and $(A,\epsilon) \in \sgnbinom{[n]}{i}$ 
be a signed subset. 
Moreover, let $[n]-A=\{c_1, c_2, \dots, c_{n-i}\}$ be written 
in ascending order, 
that is with $c_1 < c_2 < \dots < c_{n-i}.$  We define a map 
$h: \BB_{n-i} \to \BB_{ \{c_1, c_2, \dots, c_{n-i} \} }$ 
which for $1 \leq k \leq n-i$, maps $k$ to $c_k$ and 
preserves the sign.  Formally,

\begin{equation}
\label{eqn:defn_h}
h(\sigma)=0, \pi_1, \pi_2, \dots, \pi_{n-i},
\end{equation}
where for $1 \leq i \leq n-i$, if $|\sigma_i|=k$ then  
$|\pi_i|=c_k$ and $\pi_i$ has the same sign as $\sigma_i$. 
This map $h$ is clearly a bijection and is hence invertible.

By inverting the map $h$ on the elements of $[0,n]-A$ 
and appending the elements of $(A,\epsilon)$ in ascending order, 
we get a signed permutation in $G_{n,n-i}$. This map is also invertible, 
and thus we have a bijection $f: \BB_{n-i}\times \sgnbinom{[n]}{i} 
\mapsto G_{n,n-i}$ defined below.
Let $\sigma \in \BB_{n-i}$ and $(A,\epsilon) \in \sgnbinom{[n]}{i}$. 
For a set $S$ (resp. a signed set $(S,\epsilon)$), by $[S]$ (respectively 
by $[(S,\epsilon)]$), we denote the string obtained by writing 
the elements of $S$ (respectively $(S,\epsilon)$)  in ascending 
order in the usual linear order of $\mathbb{Z}$. 
Define $f(\sigma,(A,\epsilon))=h(\sigma)[(A,\epsilon)]$ where 
$h(\sigma)[(A,\epsilon)]$ denotes the juxtaposition of $h(\sigma)$ and 
$[(A,\epsilon)]$. 

\begin{example}
	Let $n=7, i=4, \sigma= 0,\overline{2},1,3 \in \BB_3$ and 
	$(A,\epsilon)=\{1,\overline{4},5,\overline{6} \}$ be a
	signed subset of $\sgnbinom{[7]}{4}$. Then, 
	$[0,n]-A = \{0,2,3,7\}$ and thus $h(\sigma)  = 0 \overline{3}27.$ 
	Moreover, we have $[[0,n]-A]=0,2,3,7$ and 
	$[(A,\epsilon)]= \overline{6}, \overline{4}, 1, 5$. Therefore, 
	$f(\sigma, (A,\epsilon))= 0,\overline{3},2,7,\overline{6},\overline{4},1,5.$
	We also have $f([[0,7]-A], (A,\epsilon)) =0,2,3,7,\ol{6}, \ol{4},1,5.$ 
\end{example}

\begin{lemma}
	\label{thm:combin-typeb-coeff}
	For positive integers $n$, we have  
	\begin{equation}
	\label{eqn:main_lemma1_B} 
	\sum_{ (A,\epsilon) \in \sgnbinom{[n]}{r}} q^{\inv_B(f([[0,n]-A], (A,\epsilon)))}= \qbinom{n}{r} (1+q^n)(1+q^{n-1})\cdots(1+q^{n-r+1}) .
	\end{equation}
\end{lemma}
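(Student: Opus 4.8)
The plan is to compute $\inv_B$ of the specific signed permutation $\pi = f([[0,n]-A],(A,\epsilon))$ explicitly in terms of the signed subset $(A,\epsilon)$, and then sum over all $(A,\epsilon) \in \sgnbinom{[n]}{r}$. Write $\pi = \pi_0,\pi_1,\dots,\pi_n$, where the first $n-r+1$ entries $\pi_0 < \pi_1 < \cdots < \pi_{n-r}$ are exactly the elements of $[0,n]-A$ in increasing order (so they are all nonnegative and increasing), and the last $r$ entries $\pi_{n-r+1},\dots,\pi_n$ are the elements of the signed set $(A,\epsilon)$ written in increasing order in $\mathbb{Z}$. Since the prefix is increasing and nonnegative, it contributes nothing to the first sum in \eqref{eqn:typeb-inv-defn}, nothing to the second sum among prefix-prefix pairs, and nothing to $\Negs(\pi)$. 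So the entire contribution to $\inv_B(\pi)$ comes from (a) pairs with at least one index in the suffix, and (b) negative entries in the suffix.

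First I would handle the dependence on the \emph{signs} $\epsilon$. I claim that for a \emph{fixed} underlying set $A = \{a_1 < a_2 < \cdots < a_r\}$, as $\epsilon$ ranges over all $2^r$ sign patterns, the generating function $\sum_\epsilon q^{\inv_B(\pi)}$ factors as $q^{c(A)}(1+q^{d_1})(1+q^{d_2})\cdots(1+q^{d_r})$ for suitable exponents. The key observation is a "local" one: flipping the sign of a single element $a_j$ (keeping the other signs fixed) changes $\inv_B(\pi)$ by a fixed amount depending only on $a_j$ and $A$, not on the other signs. Concretely, consider the contribution of the entry $\pm a_j$: it interacts with the prefix entries (the elements of $[0,n]\setminus A$), with the other suffix entries, and with $\Negs$. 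Using $\inv_B(\pi) = \inv(\pi) + \frac{1}{2}\big(\sum \text{something}\big)$-type bookkeeping, or more directly by the standard fact (see \cite[Page 294]{petersen-eulerian-nos-book}) that $\inv_B$ has a "coordinate-wise" structure, one shows that the two states $+a_j$ versus $-a_j$ have $\inv_B$ values differing by exactly $n - (\text{rank of } a_j \text{ in } [n]) + (\text{rank of } a_j \text{ in } A)$ or a similarly clean expression; I would pin this down by a short direct count. Summing the geometric-like choice over each coordinate independently gives the product $(1+q^{e_j})$ form; then one checks the multiset $\{e_j\}_j$ telescopes to $\{n-r+1, n-r+2, \dots, n\}$ as $A$ itself varies, which is where the factor $(1+q^n)(1+q^{n-1})\cdots(1+q^{n-r+1})$ appears.

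Next I would deal with the sum over the underlying sets $A \in \binom{[n]}{r}$. After extracting the sign-factor above, what remains is $\sum_{A} q^{g(A)}$ where $g(A)$ counts inversions \emph{within the positive model} — essentially the number of pairs "(element of $[n]\setminus A$) $>$ (element of $A$) in the wrong order," i.e. the inversion statistic between the two interleaved increasing blocks. This is precisely the classical $q$-binomial: $\sum_{A \in \binom{[n]}{r}} q^{\inv(\text{shuffle})} = \qbinom{n}{r}$, by the standard Gaussian-binomial identity. The one subtlety is to make sure the exponent bookkeeping is consistent — that the $q$-powers I peeled off into the $(1+q^{n-j+1})$ factors are genuinely independent of the shuffle statistic, so that the whole thing separates as a product. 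I expect this separation step to be the main obstacle: one must carefully verify that the sign-contribution exponents and the shuffle-inversion exponents do not interfere, which amounts to checking that a negative entry $\overline{a_j}$ contributes to $\inv_B$ an amount that, modulo the sign-flip term already accounted for, equals its positive counterpart's contribution. Once that independence is established, combining the two pieces yields exactly the right-hand side of \eqref{eqn:main_lemma1_B}, and an induction on $r$ (peeling off the largest element of $A$) is an alternative clean way to run the same argument if the direct factorization proves delicate.
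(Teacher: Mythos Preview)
Your first step is sound: for a fixed underlying set $A=\{a_1<\cdots<a_r\}$, flipping the sign of $a_j$ does change $\inv_B(\pi)$ by an amount independent of the other signs (in fact one computes this flip exponent to be $e_j=2a_j-j$, using $\inv_B(\pi)=\inv(\pi)+\sum_{\pi_i<0}|\pi_i|$). So the sum over $\epsilon$ genuinely factors as $q^{c(A)}\prod_{j=1}^r(1+q^{2a_j-j})$, where $c(A)$ is the prefix--suffix shuffle inversion number you describe.

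The gap is your second step. The product $\prod_j(1+q^{e_j})$ is \emph{not} independent of $A$, so it cannot be pulled outside the sum over $A$. For instance with $n=3$, $r=2$: for $A=\{1,2\}$ the sign sum is $q^2(1+q)(1+q^2)$; for $A=\{1,3\}$ it is $q(1+q)(1+q^4)$; for $A=\{2,3\}$ it is $(1+q^3)(1+q^4)$. None of these is $q^{c(A)}(1+q^2)(1+q^3)$. Your phrase ``the multiset $\{e_j\}$ telescopes to $\{n-r+1,\dots,n\}$ as $A$ varies'' does not describe an actual manipulation; the identity you need is
\[
\sum_{A\in\binom{[n]}{r}} q^{c(A)}\prod_{j=1}^r(1+q^{2a_j-j})=\qbinom{n}{r}\prod_{i=n-r+1}^{n}(1+q^i),
\]
and this does not follow by separating the two factors. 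The ``independence'' you hope for in the second paragraph fails precisely because $e_j$ depends on the rank of $a_j$ in $[n]$ \emph{and} in $A$ simultaneously. Your fallback of inducting on $r$ by peeling off $a_r$ is also not straightforward, since $a_r$ ranges over $\{r,\dots,n\}$ and removing it leaves a subset of $[a_r-1]$, not of $[n-1]$ or $[n]$.

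The paper avoids all of this by inducting on $n$ rather than on $r$ or on $A$: one partitions $\sgnbinom{[n+1]}{r+1}$ according to whether $n+1$ appears in $(A,\epsilon)$ with a positive sign, with a negative sign, or not at all, counts the extra inversions contributed in each case ($0$, $2n-r+1$, and $r+1$ respectively), and then applies the $q$-Pascal recurrence to the resulting three-term expression. This sidesteps the need to control the sign factor for every $A$ at once.
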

\begin{proof}
We proceed by induction on $n$. The base case when $n=1$ is easy to verify.
We assume the result is true for $n$ and want to show it holds for $n+1$.
Thus, we want to show that
	\begin{equation}
	\label{eqn:;in_lem_inv_B} 
	\sum_{ (A,\epsilon) \in \sgnbinom{[n+1]}{r+1}} q^{\inv_B(f([[0,n+1]-A],(A,\epsilon)))}= \qbinom{n+1}{r+1} (1+q^{n+1})(1+q^{n})\cdots(1+q^{n-r+1}).
	\end{equation}
	Let $\eta(n,r)=(1+q^n)\cdots(1+q^{n-r+1})$.
	We partition $\sgnbinom{[n+1]}{r+1}$ into the disjoint union of the 
	following three subsets and determine the contribution of each of these three sets. 
	\begin{enumerate}
		\item $\mathcal{A}_1= \lbrace (A,\epsilon) \in \sgnbinom{[n+1]}{r+1}; n+1 \in (A,\epsilon)\rbrace$,
		
		\item $\mathcal{A}_2= \lbrace (A,\epsilon) \in \sgnbinom{[n+1]}{r+1}; \overline{n+1} \in (A,\epsilon)\rbrace$,
		
		\item $\mathcal{A}_3= \lbrace (A,\epsilon) \in \sgnbinom{[n+1]}{r+1}; n+1 \notin (A,\epsilon)\rbrace$.
	\end{enumerate}

	If $n+1 \in (A,\epsilon)$, as $[(A,\epsilon)]$ is in ascending order, it will 
	be the rightmost element of $f([[0,n+1]-A],[(A,\epsilon)])$ and thus 
	it will contribute no extra inversions. Thus
	\begin{equation}
	\label{eqn:lemma1B1} 
	\sum_{  (A,\epsilon) \in \mathcal{A}_1  } q^{\inv_B(f([[0,n+1]-A],[(A,\epsilon)]))}= \eta(n,r) \qbinom{n}{r}. 
	\end{equation}

	If $\overline{n+1} \in (A,\epsilon)$, then $\overline{n+1}$ has to be in 
	the '$n-r+1$'-th position in $f([[0,n+1]-A],(A,\epsilon))$. 
	Every element of $[[0,n+1]-A]$ will be to its left and will 
	thus contribute $2$ inversions.  Further, every element to its right 
	will contribute $1$ inversion. Thus, we get $2n-r+1$ new inversions. 
	Therefore, 
	
	\begin{equation}
	\label{eqn:lemma1B2} 
	\sum_{  (A,\epsilon) \in \mathcal{A}_2  } q^{\inv_B(f([[0,n+1]-A],[(A,\epsilon)]))}
	=\eta(n,r) q^{2n-r+1} \qbinom{n}{r} .
	\end{equation}
	
	Lastly, when $n+1 \in [0,n+1]-A$, then it has to be the 
	rightmost element in $[0,n+1]-A$. Every element of $(A,\epsilon)$ will 
	contribute one inversion and thus we get `$r+1$' extra inversions. 
	Hence,
	
	\begin{equation}
	\label{eqn:lemma1B3} 
	\sum_{  (A,\epsilon) \in \mathcal{A}_3  } q^{\inv_B(f([[0,n+1]-A],(A,\epsilon)))}
	= q^{r+1} \eta(n,r+1)  \qbinom{n}{r+1} 
	= q^{r+1}(1+q^{n-r}) \eta(n,r) \qbinom{n}{r+1}. 
	\end{equation}

	Summing up \eqref{eqn:lemma1B1}, \eqref{eqn:lemma1B2} and 
	\eqref{eqn:lemma1B3}, we get 
	\begin{eqnarray*}
		&&\sum_{ (A,\epsilon) \in \sgnbinom{[n+1]}{r+1}} q^{\inv_B(f([[0,n+1]-A],(A,\epsilon)))}\\
		&=& \eta(n,r) \bigg(\qbinom{n}{r} + q^{2n-r+1} \qbinom{n}{r} +q^{r+1}(1+q^{n-r}) \qbinom{n}{r+1}  \bigg)\\
		&=& \eta(n,r) \bigg((1+q^{n+1})\qbinom{n+1}{r+1}\bigg) = \eta(n+1,r+1)\qbinom{n+1}{r+1}. 
	\end{eqnarray*}
	The last equation follows from the $q$-Pascal recurrence 
	for the Gaussian binomial coefficients (see 
	\cite[Chapter 6]{petersen-eulerian-nos-book}). 
The proof of \eqref{eqn:;in_lem_inv_B} and hence of Lemma
\ref{thm:combin-typeb-coeff}  is complete.
\end{proof}

\begin{corollary}
	\label{cor:afterlemma1typeB} 
	Let $\sigma \in \BB_{n-r}$ be a signed permutation and 
	$(A,\epsilon)\in \sgnbinom{[n]}{r}$ be a signed subset. Then 
	\begin{equation}
	\label{eqn:relan_f_qbinom}
	\sum_{ (A,\epsilon) \in \sgnbinom{[n]}{r}} q^{\inv_B(f(\sigma,(A,\epsilon)))}
	= q^{\inv_B(\sigma)}\qbinom{n}{r} (1+q^n)(1+q^{n-1})\cdots(1+q^{n-r+1}).
	\end{equation} 
\end{corollary}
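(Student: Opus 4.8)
The plan is to reduce the statement to Lemma \ref{thm:combin-typeb-coeff} by an additivity argument for the type $B$ inversion number under the map $f$. Recall that $f(\sigma,(A,\epsilon)) = h(\sigma)[(A,\epsilon)]$ is the juxtaposition of two blocks: the left block $h(\sigma)$, which carries the entries of $\sigma$ relabelled by the order-preserving, sign-preserving bijection onto $[0,n]-A$, and the right block $[(A,\epsilon)]$, which lists the signed subset in increasing order. I would first establish the key claim that
\[
\inv_B\big(f(\sigma,(A,\epsilon))\big) = \inv_B(\sigma) + \inv_B\big(f([[0,n]-A],(A,\epsilon))\big),
\]
i.e. the inversions split cleanly into those internal to the left block (which depend only on $\sigma$, since $h$ is order- and sign-preserving) and those involving the right block (which do not depend on the internal order of the left block at all). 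Once this claim is in hand, summing $q^{\inv_B(f(\sigma,(A,\epsilon)))}$ over $(A,\epsilon)\in\sgnbinom{[n]}{r}$ factors as $q^{\inv_B(\sigma)}$ times $\sum_{(A,\epsilon)}q^{\inv_B(f([[0,n]-A],(A,\epsilon)))}$, and the latter sum is exactly the left-hand side of \eqref{eqn:main_lemma1_B}, so Lemma \ref{thm:combin-typeb-coeff} finishes the proof.

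To prove the inversion-splitting claim I would work directly from the definition \eqref{eqn:typeb-inv-defn}, which counts three quantities: pairs $i<j$ with $\pi_i>\pi_j$, pairs $i<j$ with $-\pi_i>\pi_j$, and $|\Negs(\pi)|$. The negation term is immediate: $h$ preserves signs, so the negative entries of $h(\sigma)$ are in bijection with the negative entries of $\sigma$, and the negative entries of $[(A,\epsilon)]$ contribute the same count whether the left block is $h(\sigma)$ or $[[0,n]-A]$ (both have the same entry set $[0,n]-A$, all positive); hence $|\Negs(f(\sigma,(A,\epsilon)))| = |\Negs(\sigma)| + |\Negs(f([[0,n]-A],(A,\epsilon)))|$ after noting $\Negs([[0,n]-A])=\emptyset$, and likewise $|\Negs(\sigma)| = |\Negs(h(\sigma))|$. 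For the two order-comparison terms, split each pair $(i,j)$ according to whether both indices lie in the left block, both in the right block, or one in each. Pairs with both indices in the left block: since $h$ is a strictly increasing relabelling of $[0,n-r]$ onto $[0,n]-A$ that preserves signs, both $\pi_i>\pi_j$ and $-\pi_i>\pi_j$ are equivalent to the corresponding inequalities for $\sigma$, giving exactly the left-block contribution to $\inv_B(\sigma)$. Pairs with both indices in the right block contribute identically to $f(\sigma,(A,\epsilon))$ and to $f([[0,n]-A],(A,\epsilon))$, since the right block $[(A,\epsilon)]$ is literally the same in both. Pairs with $i$ in the left block and $j$ in the right block: here one compares $h(\sigma)_i$ (an element of $[0,n]-A$, positive) or its negation against $[(A,\epsilon)]_{j}$; because $h(\sigma)$ and $[[0,n]-A]$ have the same underlying set of entries $\{0,1,\dots,n\}\setminus A$, the number of such pairs yielding an inversion (of either type) is the same for $f(\sigma,(A,\epsilon))$ as for $f([[0,n]-A],(A,\epsilon))$ — it depends only on the multiset of left-block values and the right-block word, not on the order in which the left block is written. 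Summing these three cases gives the claimed identity.

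The main obstacle is the cross-block term: one has to argue carefully that the count of inversions (both the $\pi_i>\pi_j$ type and the $-\pi_i>\pi_j$ type) between a fixed left-block value and the right block is invariant under permuting the left block, and in particular is the same for $h(\sigma)$ and for the sorted word $[[0,n]-A]$. This is true precisely because in \eqref{eqn:typeb-inv-defn} the cross-block pairs are $(i,j)$ with $i$ in the left block and $j$ in the right, so every left-block position is ``to the left of'' every right-block position regardless of internal order; thus the contribution depends only on the set of left-block entries, which is $[0,n]-A$ in both cases. Once this observation is made precise, the rest is bookkeeping, and the corollary follows by combining the splitting identity with Lemma \ref{thm:combin-typeb-coeff}. (Alternatively, one can avoid re-deriving the splitting from scratch by invoking the well-known block-additivity of Coxeter length under the relevant parabolic coset decomposition of $\BB_n$, but the direct computation above is self-contained.)
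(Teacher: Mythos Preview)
Your approach is exactly the paper's: establish the additivity identity $\inv_B(f(\sigma,(A,\epsilon))) = \inv_B(\sigma) + \inv_B(f([[0,n]-A],(A,\epsilon)))$ and then invoke Lemma~\ref{thm:combin-typeb-coeff}; the paper in fact asserts this identity with less justification than you supply.

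One small slip in your cross-block argument: you write that $h(\sigma)$ and $[[0,n]-A]$ ``have the same underlying set of entries $\{0,1,\dots,n\}\setminus A$'', and earlier that $h(\sigma)_i$ is ``an element of $[0,n]-A$, positive''. This is not right: $h$ preserves the signs of $\sigma$, so $h(\sigma)$ may well have negative entries, whereas $[[0,n]-A]$ is entirely nonnegative. Thus the multisets of \emph{signed} left-block values differ, and your stated reason for the invariance of the cross-block count does not hold as written. The fix is to observe that for any left-block value $x$ and right-block value $a$, the combined contribution $[x>a]+[-x>a]$ to the two order-comparison terms of \eqref{eqn:typeb-inv-defn} is invariant under $x\mapsto -x$ and hence depends only on $|x|$; since the multiset of absolute values of the left block is $[0,n]-A$ in both cases, the cross-block inversion count is the same. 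With this correction your argument is complete.
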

\begin{proof}
	For $\sigma \in \BB_{n-r}$ and $(A,\epsilon) \in \sgnbinom{[n]}{r}$, we have
	\begin{equation*}
	\inv_B(f(\sigma,(A,\epsilon))) =  \inv_B(h(\sigma),[(A,\epsilon)])) 
	=  \inv_B(f([[0,n]-A],(A,\epsilon)))+\inv_B(\sigma).
	\end{equation*} 
	The proof follows as it takes exactly $\inv_B(\sigma)$ 
	inversions to get $h(\sigma)$ from the  
	identity permutation in $\BB_{n-r}$  (recall
	$h(\sigma)$ is defined in \eqref{eqn:defn_h}).
\end{proof}

Adding 
\eqref{eqn:relan_f_qbinom} over all $\pi \in \BB_{n-r}$ 
gives us the following.

\begin{corollary}
	\label{cor:afterlemmacor2-B}
	For positive integers $n$, we have
	\begin{equation}
	\sum_{\sigma \in \BB_{n-r}}\sum_{ (A,\epsilon) \in \sgnbinom{[n]}{r}} t^{\odes_B(\sigma)}s^{\edes_B(\sigma)} q^{\inv_B(f(\sigma,(A,\epsilon)))}= B_{n-r}(s,t,q) \qbinom{n}{r} (1+q^n)\cdots(1+q^{n-r+1}). 
	\end{equation}
\end{corollary}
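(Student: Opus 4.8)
The plan is to obtain the claimed identity by summing the identity of Corollary~\ref{cor:afterlemma1typeB} over all $\sigma \in \BB_{n-r}$, after first weighting each $\sigma$ by the monomial $t^{\odes_B(\sigma)}s^{\edes_B(\sigma)}$. The crucial point is that this monomial depends only on $\sigma$ and is completely independent of the signed subset $(A,\epsilon) \in \sgnbinom{[n]}{r}$; hence it may be moved inside the inner sum over $(A,\epsilon)$ without any modification. Concretely, I would multiply both sides of \eqref{eqn:relan_f_qbinom} by $t^{\odes_B(\sigma)}s^{\edes_B(\sigma)}$ to get
\begin{equation*}
\sum_{(A,\epsilon) \in \sgnbinom{[n]}{r}} t^{\odes_B(\sigma)}s^{\edes_B(\sigma)} q^{\inv_B(f(\sigma,(A,\epsilon)))}
= t^{\odes_B(\sigma)}s^{\edes_B(\sigma)}q^{\inv_B(\sigma)}\,\qbinom{n}{r}\prod_{j=0}^{r-1}(1+q^{n-j}).
\end{equation*}

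Next I would sum this equation over all $\sigma \in \BB_{n-r}$. On the right-hand side the factor $\qbinom{n}{r}\prod_{j=0}^{r-1}(1+q^{n-j})$ is a constant with respect to $\sigma$, so it factors out of the $\sigma$-sum, leaving $\sum_{\sigma \in \BB_{n-r}} t^{\odes_B(\sigma)}s^{\edes_B(\sigma)}q^{\inv_B(\sigma)}$, which is exactly $B_{n-r}(s,t,q)$ by its definition in \eqref{eqn:defn_eulerian_bi_tri_variant}. On the left-hand side, summing over $\sigma$ produces precisely the double sum in the statement. Equating the two sides gives the corollary.

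I do not expect any obstacle here: the corollary is a purely formal consequence of Corollary~\ref{cor:afterlemma1typeB} together with the definition of $B_{n-r}(s,t,q)$. The one point worth stating explicitly — and the only fact genuinely used — is that $\odes_B(\sigma)$ and $\edes_B(\sigma)$ are computed on $\sigma \in \BB_{n-r}$ itself, not on the enlarged permutation $f(\sigma,(A,\epsilon)) \in \BB_n$, so these exponents are invariant under the choice of $(A,\epsilon)$ and the double sum factorises as described.
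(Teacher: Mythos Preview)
Your proposal is correct and matches the paper's approach exactly: the paper simply states that the corollary follows by adding \eqref{eqn:relan_f_qbinom} over all $\sigma \in \BB_{n-r}$, which is precisely the multiply-then-sum argument you describe. Your extra remark that $\odes_B(\sigma)$ and $\edes_B(\sigma)$ depend only on $\sigma$ (not on $(A,\epsilon)$) is the one point the paper leaves implicit, and it is indeed the only thing one needs to check.
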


%
%

Reiner in \cite{reiner-distrib-descents-len-coxeter} gave the following
egf for the polynomial enumerating descents and length in $\BB_n$.  

\begin{theorem}[Reiner]
	\label{thm:reiner_typeb-stan}
	We have the following.
	\begin{equation*}
	\sum_{n \geq 0} B_n(t,q) \frac{u^n}{B_n(1,q)} = \frac{(1-t)\exp_B{(u(1-t);q}) } 
	{1 - t\exp({u(1-t);q}) }
	\end{equation*}
\end{theorem}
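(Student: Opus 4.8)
Rather than attack the generating function head-on, I would first squeeze out of the combinatorics already in place a closed recurrence for $B_n(t,q)$ --- a type B, $q$-weighted analogue of the Foata--D\'esarm\'enien recurrence --- and then convert that recurrence into the stated egf by one linear-algebra step. The only ingredient I would quote is the bijection $f\colon \BB_{n-r}\times\sgnbinom{[n]}{r}\to G_{n,n-r}$ together with Corollary~\ref{cor:afterlemmacor2-B}, specialised to $s=t$, i.e.
\[
\sum_{\sigma\in\BB_{n-r}}\ \sum_{(A,\epsilon)\in\sgnbinom{[n]}{r}}
t^{\des_B(\sigma)}\,q^{\inv_B(f(\sigma,(A,\epsilon)))}
\;=\; B_{n-r}(t,q)\,\qbinom{n}{r}(1+q^{n})(1+q^{n-1})\cdots(1+q^{n-r+1})\;=:\;R_r .
\]

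\noindent\textbf{Step 1: a recurrence for $B_n(t,q)$.}
View $\pi\in\BB_n$ as the word $\pi_0\pi_1\cdots\pi_n$ with $\pi_0=0$, and recall $G_{n,n-r}=\{\pi:\pi_{n-r+1}<\cdots<\pi_n\}$ and $G_{n,-1}=\{0,1,\dots,n\}$. Since $f$ is a bijection onto $G_{n,n-r}$ and the order-preserving relabelling $h$ changes neither the descent pattern nor the signs, the image $\pi=f(\sigma,(A,\epsilon))$ has the descents of $\sigma$ in positions $0,\dots,n-r-1$, no descent in positions $n-r+1,\dots,n-1$, and a descent at the ``seam'' $n-r$ precisely when $\pi_{n-r}>\pi_{n-r+1}$. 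Hence $R_r=\sum_{\pi\in G_{n,n-r}}t^{\des_B(\pi)-\chi_\pi}q^{\inv_B(\pi)}$, where $\chi_\pi\in\{0,1\}$ flags a seam descent. Splitting $G_{n,n-r}$ by $\chi_\pi$: the part with $\chi_\pi=1$ is exactly the set of $\pi$ whose maximal increasing suffix has length $r$, and the part with $\chi_\pi=0$ is exactly $G_{n,n-r-1}$. Writing $Q_r=\sum_{\pi\in G_{n,n-r}}t^{\des_B(\pi)}q^{\inv_B(\pi)}$ (so $Q_1=B_n(t,q)$, as $r=1$ is no constraint, and $Q_{n+1}=1$), this rearranges to
\[
Q_r \;=\; t\,R_r+(1-t)\,Q_{r+1},\qquad 1\le r\le n,
\]
and iterating from $r=1$ down to $r=n$ gives
\[
B_n(t,q)\;=\;(1-t)^{n}\;+\;t\sum_{r=1}^{n}(1-t)^{r-1}\,B_{n-r}(t,q)\,\qbinom{n}{r}(1+q^{n})\cdots(1+q^{n-r+1}) .
\]
(Setting $q=1$ gives a type B analogue of the Foata--D\'esarm\'enien recurrence; a direct check at $n=1,2$ confirms the formula.)

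\noindent\textbf{Step 2: from the recurrence to the egf.}
The bridge is the elementary identity $B_n(1,q)=\prod_{i=1}^{n}[2i]_q=[n]_q!\,\prod_{i=1}^{n}(1+q^{i})$, which yields
\[
\qbinom{n}{r}(1+q^{n})\cdots(1+q^{n-r+1})\;=\;\frac{B_n(1,q)}{B_{n-r}(1,q)\,[r]_q!}.
\]
Dividing the recurrence by $B_n(1,q)$, summing $u^n$ over $n\ge0$, and setting $F(u)=\sum_{n\ge0}B_n(t,q)u^n/B_n(1,q)$, the pure term produces $\sum_{n\ge0}((1-t)u)^{n}/B_n(1,q)=\exp_B\bigl((1-t)u;q\bigr)$ and the rest is a Cauchy product, so
\[
F(u)\;=\;\exp_B\bigl((1-t)u;q\bigr)\;+\;t\Bigl(\,\sum_{r\ge1}\frac{(1-t)^{r-1}u^{r}}{[r]_q!}\Bigr)F(u).
\]
Since $\sum_{r\ge1}(1-t)^{r-1}u^{r}/[r]_q!=\bigl(\mathrm{e}_q((1-t)u)-1\bigr)/(1-t)$, solving this linear equation for $F(u)$ gives
\[
F(u)\;=\;\frac{(1-t)\,\exp_B\bigl((1-t)u;q\bigr)}{1-t\,\mathrm{e}_q\bigl((1-t)u\bigr)},
\]
which is the asserted identity --- the numerator carrying the type B $q$-exponential $\exp_B$ and the denominator the ordinary $q$-exponential $\mathrm{e}_q$ (this mixed form is exactly what the normalisation $u^n/B_n(1,q)$ forces, and is consistent with Stanley's type A precedent in Theorem~\ref{thm:des-inv-eulerian-stan}).

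\noindent\textbf{The main obstacle.}
Everything after Step~1 is bookkeeping with $q$-factorials plus one geometric-series inversion; the real work is Step~1. The delicate points there are keeping precise track of the seam position $n-r$ --- deciding whether the last letter of the core $\sigma$ exceeds the smallest appended letter of $(A,\epsilon)$ --- and organising the \emph{nested} families $G_{n,n-r}\supseteq G_{n,n-r-1}$ so that the difference telescopes cleanly into the factor $(1-t)^{r-1}$. The extreme cases ($r=n$, where the seam is position $0$ and ``$\pi_0=0>\pi_1$'' just means $\pi_1<0$, and the lone permutation of $G_{n,-1}$) should be checked separately.
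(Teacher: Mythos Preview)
Your proposal is correct. The paper does not actually prove Theorem~\ref{thm:reiner_typeb-stan}: it is quoted as Reiner's result, and the paper only asserts (without details) its equivalence with the recurrence form Theorem~\ref{thm:type-b-foata-desarm}. What the paper \emph{does} prove is the trivariate generalisation Theorem~\ref{thm:typeb_q_rec}, and the method there---Lemma~\ref{lem:passing_Gni_to_Gniminus1}, peeling off one position at a time from the increasing suffix of $G_{n,i}$ via the bijection $f$---is precisely your Step~1 argument with $s=t$. Your recurrence $Q_r=tR_r+(1-t)Q_{r+1}$ is the $s=t$ collapse of \eqref{eqn:passing_Gni_to_Gniminus1iodd-B} and \eqref{eqn:passing_Gni_to_Gniminus1ieven-B}, and your telescoped formula for $B_n(t,q)$ is equivalent to \eqref{eqn:type-B-ver-foata-desarm} after absorbing the $k=0$ summand on the right into the left side and dividing by $(1-t)$. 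Your Step~2 then supplies the ``It can be seen that'' passage the paper leaves to the reader between Theorems~\ref{thm:reiner_typeb-stan} and~\ref{thm:type-b-foata-desarm}, and along the way you correctly identify that the denominator carries the ordinary $q$-exponential $\mathrm{e}_q$ rather than $\exp_B$.
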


It can be seen that 
Theorem \ref{thm:reiner_typeb-stan} is equivalent to the following.

\begin{theorem}[Reiner]
	\label{thm:type-b-foata-desarm}
	For positive integers $n$, the polynomials  $B_n(q,t)$ satisfy the following.
	\begin{equation}
	\label{eqn:type-B-ver-foata-desarm}
	\frac{B_n(t,q)}{B_n(1,q)} = t \sum_{k=0}^n 
	\frac{B_{n-k}(t,q)(1-t)^k}{B_{n-k}(1,q) [k]_q!} + 	\frac{(1-t)^{n+1}}{B_n(1,q)}.
	\end{equation}
\end{theorem}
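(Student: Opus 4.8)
The plan is to obtain \eqref{eqn:type-B-ver-foata-desarm} by reading off the coefficient of $u^n$ from the generating function identity of Theorem~\ref{thm:reiner_typeb-stan}; since every step in this extraction is reversible, the same computation shows that Theorems~\ref{thm:reiner_typeb-stan} and \ref{thm:type-b-foata-desarm} are equivalent. Write $F(u) = \sum_{n \geq 0} B_n(t,q)\, u^n/B_n(1,q)$ and $g(u) = \exp_B\bigl(u(1-t);q\bigr)$, so that Theorem~\ref{thm:reiner_typeb-stan} reads $F(u) = \frac{(1-t)g(u)}{1-t\,g(u)}$. Since $g(0)=1$, the series $1-t\,g(u)$ has constant term $1-t$ and is therefore a unit in $\mathbb{Q}(t,q)[[u]]$; clearing it turns Theorem~\ref{thm:reiner_typeb-stan} into the equivalent functional equation
\[
F(u) = g(u)\bigl(t\,F(u)+(1-t)\bigr).
\]

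Next I would record the coefficients of the two factors on the right. From the definition $\exp_B(v;q)=\sum_{k\geq 0}v^k/B_k(1,q)$ we get $[u^k]\,g(u) = (1-t)^k/B_k(1,q)$, and since $F(0) = B_0(t,q)/B_0(1,q) = 1$, the series $t\,F(u)+(1-t)$ has constant term $1$ while $[u^m]\bigl(t\,F(u)+(1-t)\bigr) = t\,B_m(t,q)/B_m(1,q)$ for $m\geq 1$. Forming the Cauchy product and extracting $[u^n]$ from the functional equation then gives
\[
\frac{B_n(t,q)}{B_n(1,q)} = t\sum_{k=0}^{n-1}\frac{(1-t)^k B_{n-k}(t,q)}{B_k(1,q)\,B_{n-k}(1,q)} \;+\; \frac{(1-t)^n}{B_n(1,q)},
\]
where the trailing term is precisely the $k=n$ summand of the convolution, arising from the constant term $1$ of $t\,F(u)+(1-t)$.

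The remaining step is the purely formal manipulation that turns this into \eqref{eqn:type-B-ver-foata-desarm}: split $(1-t)^n/B_n(1,q)$ as $t(1-t)^n/B_n(1,q) + (1-t)^{n+1}/B_n(1,q)$ and absorb the first summand back into the sum as its previously missing $k=n$ term, which is legitimate because $B_0(t,q)=B_0(1,q)=1$; the result is exactly \eqref{eqn:type-B-ver-foata-desarm}. Reversing each step---recollapsing the convolution and then undoing the clearing of the unit $1-t\,g(u)$---recovers Theorem~\ref{thm:reiner_typeb-stan}, so the two statements are indeed equivalent. I do not expect a genuine obstacle here: the computation is routine power-series arithmetic, and the single point that needs care is the $k=n$ boundary term of the convolution together with the stray additive constant $(1-t)$ inside $g(u)\bigl(t\,F(u)+(1-t)\bigr)$, which is exactly what produces the slightly off-looking exponent $(1-t)^{n+1}$ in the last term of \eqref{eqn:type-B-ver-foata-desarm}.
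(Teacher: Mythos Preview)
Your approach---extracting coefficients from Theorem~\ref{thm:reiner_typeb-stan}---is exactly what the paper intends when it says the two theorems are equivalent. However, there is a genuine error in your reading of Theorem~\ref{thm:reiner_typeb-stan}: you set $g(u)=\exp_B(u(1-t);q)$ and write the identity as $F(u)=\dfrac{(1-t)g(u)}{1-t\,g(u)}$, using the \emph{same} $g$ in numerator and denominator. In fact the two exponentials in Reiner's formula are different: the numerator carries $\exp_B(u(1-t);q)=\sum_{k\ge 0}\frac{(1-t)^k u^k}{B_k(1,q)}$, while the denominator carries the ordinary $q$-exponential $\mathrm{e}_q(u(1-t))=\sum_{k\ge 0}\frac{(1-t)^k u^k}{[k]_q!}$.

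This is not a cosmetic point. Your Cauchy product produces
\[
\frac{B_n(t,q)}{B_n(1,q)} = t\sum_{k=0}^{n-1}\frac{(1-t)^k B_{n-k}(t,q)}{B_k(1,q)\,B_{n-k}(1,q)} + \frac{(1-t)^n}{B_n(1,q)},
\]
with $B_k(1,q)=\prod_{i=1}^k[2i]_q$ in the denominator, whereas \eqref{eqn:type-B-ver-foata-desarm} has $[k]_q!=\prod_{i=1}^k[i]_q$ there. These do not agree: already at $n=1$ your formula gives $(1+t^2q)/(1+q)$ on the right, not $(1+tq)/(1+q)$. The fix is simply to rerun your computation starting from $F(u)\bigl(1-t\,\mathrm{e}_q(u(1-t))\bigr)=(1-t)\exp_B(u(1-t);q)$; the convolution then pairs $[u^k]\,\mathrm{e}_q(u(1-t))=(1-t)^k/[k]_q!$ with $[u^{n-k}]F(u)$, and your boundary-term trick at $k=n$ (splitting off $(1-t)^{n+1}/B_n(1,q)$) goes through verbatim.
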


We are now interested in proving a trivariate analogue of Theorem \ref{thm:type-b-foata-desarm}. Towards that, 
we start with the following lemma.

\begin{lemma}
\label{lem:passing_Gni_to_Gniminus1}
Let $n$ be a positive integer and let $0\le i \le n$.
	When $i$ is odd, we have
	\begin{eqnarray}
	\sum_{\pi'\in G_{n,i}} t^{\odes_B(\pi')}s^{\edes_B(\pi')}q^{\inv_B(\pi')} =&&
	t\frac{B_{i}(s,t,q)B_n(1,q)}{B_{i}(1,q)[n-i]_q!}\nonumber\\
	&+& (1-t) \bigg\{ \sum_{\pi' \in G_{n,i-1}} t^{\odes_B(\pi')}
	s^{\edes_B(\pi')}q^{\inv_B(\pi')}\bigg\} \label{eqn:passing_Gni_to_Gniminus1iodd-B} .
	\end{eqnarray} 
	When $i$ is even, we have
	\begin{eqnarray}
	\sum_{\pi'\in G_{n,i}} t^{\odes_B(\pi')}s^{\edes_B(\pi')}q^{\inv_B(\pi')} &=&
	s\frac{B_{i}(s,t,q)B_n(1,q)}{B_{i}(1,q)[n-i]_q!}\nonumber\\&&+ (1-s) \bigg\{ \sum_{\pi' \in G_{n,i-1}} t^{\odes_B(\pi')}
	s^{\edes_B(\pi')}q^{\inv_B(\pi')}\bigg\}. \label{eqn:passing_Gni_to_Gniminus1ieven-B} 
	\end{eqnarray} 
\end{lemma}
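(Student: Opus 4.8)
The plan is to analyze the passage from $G_{n,i}$ to $G_{n,i-1}$ by looking at what happens at position $i$ in a signed permutation $\pi' \in G_{n,i}$. Recall $G_{n,i}$ consists of those $\pi \in \BB_n$ whose last $n-i$ entries are increasing; so $\pi'_{i+1} < \pi'_{i+2} < \cdots < \pi'_n$, but $\pi'_i$ is unconstrained relative to $\pi'_{i+1}$. First I would partition $G_{n,i}$ according to whether position $i$ is a descent of $\pi'$ (that is, $\pi'_i > \pi'_{i+1}$) or an ascent ($\pi'_i < \pi'_{i+1}$; equality cannot occur). The ascent part is precisely $G_{n,i-1}$, since if $\pi'_i < \pi'_{i+1} < \cdots < \pi'_n$ then the last $n-i+1$ entries are increasing. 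For each such $\pi'$, whether position $i$ records as an odd or even ascent is determined by the parity of $i$: when $i$ is odd it contributes a factor $t$ to $t^{\odes_B}$ relative to how the same word sits in $G_{n,i-1}$... wait — more carefully, I would argue in the reverse direction, \emph{building} elements of $G_{n,i}$ from $G_{n,i-1}$ together with the ``new'' descent-creating configurations, so that the generating-function bookkeeping is cleanest.

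Concretely, the key step is to express
$$\sum_{\pi'\in G_{n,i}} t^{\odes_B(\pi')}s^{\edes_B(\pi')}q^{\inv_B(\pi')}
= X_i + Y_i,$$
where $Y_i$ sums over the $\pi' \in G_{n,i}$ with an ascent at position $i$ — these are exactly the elements of $G_{n,i-1}$, but each such word, when regarded as an element of $G_{n,i}$, picks up an extra recorded ascent at position $i$ (odd or even according to the parity of $i$). Since $\des_B + \asc_B$ statistics behave so that passing from $G_{n,i-1}$-membership to $G_{n,i}$-membership converts a ``free'' position into a recorded ascent, and we are tracking descents only via $s,t$, I would instead set up the complementary decomposition: the $\pi' \in G_{n,i}$ with a \emph{descent} at position $i$ form a set $X_i$, and those with an ascent at position $i$ are in bijection with $G_{n,i-1}$ with no change to $t^{\odes_B}s^{\edes_B}q^{\inv_B}$ (an ascent at an odd position $i$ contributes nothing to $\odes_B,\edes_B$). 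Hence $Y_i = \sum_{\pi'\in G_{n,i-1}} t^{\odes_B(\pi')}s^{\edes_B(\pi')}q^{\inv_B(\pi')}$, and it remains to identify $X_i$.

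For $X_i$, I would use the bijection $f$ of Corollary~\ref{cor:afterlemmacor2-B}. An element of $G_{n,i}$ with a descent at position $i$ is obtained by choosing a signed permutation $\sigma \in \BB_i$ with $\sigma$ \emph{ending in a descent-forcing value} — but the cleaner route is: the set of $\pi'\in G_{n,i}$ (descent at $i$ \emph{or} ascent at $i$) is in bijection via $f$ with $\BB_i \times \sgnbinom{[n]}{n-i}$, and Corollary~\ref{cor:afterlemmacor2-B} evaluates the full sum over $G_{n,i}$ as $B_i(s,t,q)\,\frac{B_n(1,q)}{B_i(1,q)\,[n-i]_q!}$ (using $\qbinom{n}{n-i}(1+q^n)\cdots(1+q^{i+1}) = B_n(1,q)/(B_i(1,q)[n-i]_q!)$, which follows from $B_n(1,q) = \prod_{j=1}^n [2j]_q = 2^n n!_q$ and the product formula $[2j]_q = [j]_q(1+q^j)$). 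Now in this sum over $G_{n,i}$, position $i$ is a descent exactly when $\sigma_i > \sigma_{i+1}$ in the "virtual" sense, i.e. when the last entry $\sigma_i$ of $\sigma \in \BB_i$ is larger than $\pi'_{i+1}$; decomposing by the relative order of $\sigma_i$ and the block appended by $f$ gives that a fraction of these become descents. The upshot I expect: the ascent-at-$i$ contribution equals exactly $\sum_{G_{n,i-1}}(\cdots)$ (scaled by $t$ or $s$ according to parity because the recorded statistic at position $i$ of a $G_{n,i-1}$ element which is an ascent at position $i-1$... ), and the descent-at-$i$ contribution is the leftover. Writing $W_i := \sum_{\pi'\in G_{n,i}}t^{\odes_B}s^{\edes_B}q^{\inv_B}$, I would show for odd $i$ that the ascent-at-$i$ part contributes $t \cdot (\text{something}) + (\text{the }G_{n,i-1}\text{ sum, with a }(1-t)\text{ shortfall already absorbed})$; matching coefficients of $s,t$ on the two sides then yields exactly $W_i = t\,\frac{B_i(s,t,q)B_n(1,q)}{B_i(1,q)[n-i]_q!} + (1-t)\,W_{i-1}$, and the even case is identical with $s$ replacing $t$.

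The main obstacle is the middle step: correctly accounting for the parity-dependent recording at position $i$ and showing that "ascent at position $i$" elements of $G_{n,i}$ correspond, statistic-preservingly, to \emph{all} of $G_{n,i-1}$ while "descent at position $i$" elements carry the $t$ (resp.\ $s$) times the full $\BB_i$ generating function minus that same $G_{n,i-1}$ piece. This is exactly the type-A argument underlying Theorem~\ref{thm:type-b-foata-desarm} (the Foata–D\'esarm\'enier–Reiner recurrence), lifted to the bivariate parity-refined setting; the only genuinely new content is verifying that the parity of position $i$ (not of $i-1$) is what governs whether the new descent/ascent is recorded with $s$ or $t$, which is immediate from the definitions of $\odes_B,\edes_B$ as counting descents at odd/even positions in $[n-1]_0$. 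Once that is pinned down, everything else is the bookkeeping already packaged in Corollary~\ref{cor:afterlemmacor2-B} and the $q$-Pascal identity, so I would present the two displayed identities as the two parities of a single argument.
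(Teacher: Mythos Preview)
Your approach is essentially the paper's: decompose $G_{n,i}$ as $G_{n,i-1}\sqcup F_{n,i}$ according to whether position $i$ is an ascent or a descent, then compare with the sum from Corollary~\ref{cor:afterlemmacor2-B}, which records $\odes_B(\sigma),\edes_B(\sigma)$ of the $\BB_i$-component rather than of $\pi'$; the difference is exactly one odd (resp.\ even) descent on $F_{n,i}$, giving the $\tfrac{1}{t}$ (resp.\ $\tfrac{1}{s}$) correction that rearranges to the stated recurrence. Your write-up would be much cleaner if you state that single relation $\odes_B(\pi')=\odes_B(\sigma)+\mathbf{1}[\pi'\in F_{n,i}]$ (for $i$ odd) explicitly rather than circling around it, and note that the aside ``$B_n(1,q)=\prod_{j=1}^n[2j]_q=2^n n!_q$'' is a slip (the second equality fails for general $q$; only $[2j]_q=[j]_q(1+q^j)$ is needed, and you already invoke it).
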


\begin{proof} We prove \eqref{eqn:passing_Gni_to_Gniminus1iodd-B} 
first and therefore take $i$ to be odd. 
Let $F_{n,i}=G_{n,i}-G_{n,i-1}.$
	We have 
	\begin{eqnarray}
	& & \displaystyle  \sum_{(\pi,(A,\epsilon))\in \BB_i\times \sgnbinom{[n]}{n-i}} t^{\odes_B(\pi)}s^{\edes_B(\pi)}q^{\inv_B(f(\pi,(A,\epsilon)))} \nonumber  \\
	&  = & \displaystyle  \sum_{(\pi,(A,\epsilon))\in f^{-1}(G_{n,i})} t^{\odes_B(\pi)}s^{\edes_B(\pi)} q^{\inv_B(f(\pi,(A,\epsilon)))} \nonumber \\
	& = &  \displaystyle \sum_{(\pi,(A,\epsilon))\in f^{-1}(G_{n,i-1})} t^{\odes_B(\pi)}s^{\edes_B(\pi)}q^{\inv_B(f(\pi,(A,\epsilon)))} \nonumber  \\
	& & +  \displaystyle  \sum_{(\pi,(A,\epsilon))\in f^{-1}(F_{n,i})} t^{\odes_B(\pi)}s^{\edes_B(\pi)}q^{\inv_B(f(\pi,(A,\epsilon)))} \nonumber\\
	& =& \sum_{f(\pi,(A,\epsilon))\in G_{n,i-1}} t^{\odes_B(f(\pi,A))}s^{\edes_B(f(\pi,A))}q^{\inv_B(f(\pi,(A,\epsilon)))} \nonumber \\
	& & + \frac{1}{t}\bigg\{ \sum_{f(\pi,(A,\epsilon))\in F_{n,i}} t^{\odes_B(f(\pi,A))}s^{\edes_B(f(\pi,A))} q^{\inv_B(f(\pi,(A,\epsilon)))} \bigg\} \nonumber \\
	& =& \sum_{\pi'\in G_{n,i-1}} t^{\odes_B(\pi')}s^{\edes_B(\pi')}q^{\inv_B(\pi')} \nonumber\\  & &   + \frac{1}{t}\bigg\{ \sum_{\pi'\in G_{n,i}} t^{\odes_B(\pi')}s^{\edes_B(\pi')}q^{\inv_B(\pi')} - \sum_{\pi' \in G_{n,i-1}} t^{\odes_B(\pi')}s^{\edes_B(\pi')}q^{\inv_B(\pi')} \bigg\}\nonumber.
	\end{eqnarray}

The second equality follows because $f$ is a bijection between 
$\BB_i \times \sgnbinom{[n]}{n-i}$ and $G_{n,i}$. For the fourth 
equality, we have used that $i$ is odd. In the fifth equality, we are 
again using that $f$ is a bijection and $F_{n,i}=G_{n,i}-G_{n,i-1}.$ 

From Corollary \ref{cor:afterlemmacor2-B} with $i=n-r$, we have 
\begin{eqnarray}
&&B_{i}(s,t,q) \qbinom{n}{n-i} (1+q^n)\cdots(1+q^{i+1}) \nonumber \\
&=&(t-1) \sum_{\pi' \in G_{n,i-1}} t^{\odes(\pi')}
s^{\edes_B(\pi')}q^{\inv_B(\pi')} + 
\sum_{\pi'\in G_{n,i}} t^{\odes_B(\pi')}s^{\edes_B(\pi')}q^{\inv_B(\pi')}.
\label{eqn:first_in_combination}
\end{eqnarray}
The following result is easy to see
\begin{equation}
\label{eqn:easy_relation_typeb}
\qbinom{n}{n-i}(1+q^n)\cdots(1+q^{i+1})=
\frac{B_n(1,q)}{B_{i}(1,q)[n-i]_q!}. 
\end{equation}

Combining
\eqref{eqn:first_in_combination} and 
\eqref{eqn:easy_relation_typeb}
completes the proof of \eqref{eqn:passing_Gni_to_Gniminus1iodd-B}. 
The proof when $i$ is even is similar and hence is omitted.
\end{proof} 

We are now in a position to give a refinement of 
Theorem \ref{thm:type-b-foata-desarm}.

\begin{theorem}
	\label{thm:typeb_q_rec}
Let $B_0(s,t,q)=1$. When $n \geq 1$, the polynomials $B_n(s,t,q)$ 
satisfy the following recurrence.
	\begin{eqnarray}
	\label{eqn:typeb_rec_even}
	\frac{B_{n}(s,t,q)}{B_{n}(1,q)}& =& \frac{(1-t)^{k}(1-s)^{k+1}}{B_{n}(1,q)}+\sum_{r=0}^{ k-1}  t(1-t)^r(1-s)^{r+1} \frac{B_{n-2r-1}(s,t,q)}{B_{n-2r-1}(1,q)[2r+1]_q!} \nonumber\\
	& & + \sum_{r=0}^{k} s(1-t)^{r}(1-s)^{r}\frac{B_{n-2r}(s,t,q)}{B_{n-2r}(1,q)[2r]_q!} \hspace{1 cm} \mbox{if $n=2k$ is even,}  \\
	\label{eqn:typeb_rec_odd}
	\frac{B_{n}(s,t,q)}{B_n(1,q)} & =& \frac{(1-t)^{k+1}(1-s)^{k+1}}{B_n(1,q)} + \sum_{r=0}^{k} s(1-t)^{r+1}(1-s)^{r} \frac{B_{n-2r-1}(s,t,q)}{B_{n-2r-1}(1,q)[2r+1]_q!} \nonumber \\ 
	& & + \sum_{r=0}^{k} t(1-t)^{r}(1-s)^r\frac{B_{n-2r}(s,t,q)}{B_{n-2r}(1,q)[2r]_q!} \hspace{1 cm} \mbox{if $n=2k+1$ is odd}. 
	\end{eqnarray}
\end{theorem}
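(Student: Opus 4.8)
The plan is to iterate Lemma~\ref{lem:passing_Gni_to_Gniminus1} down from $i=n$ to $i=-1$ and then read off the claimed recurrence. The starting observation is that $G_{n,n} = \BB_n$, so $\sum_{\pi' \in G_{n,n}} t^{\odes_B(\pi')}s^{\edes_B(\pi')}q^{\inv_B(\pi')} = B_n(s,t,q)$, while at the bottom $G_{n,-1}$ is the single increasing word $0,1,2,\ldots,n$, which has no descents, so its weight is $q^{\inv_B(0,1,\ldots,n)} = 1$ (alternatively one can stop at $G_{n,0}$, whose only element is again this increasing word). Thus the telescoping will express $B_n(s,t,q)/B_n(1,q)$ as a sum of contributions picked up at each step $i \to i-1$, plus a leftover term coming from $G_{n,-1}$ (or $G_{n,0}$).

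The key step is to unwind the recursion in Lemma~\ref{lem:passing_Gni_to_Gniminus1}. Writing $g_i := \sum_{\pi' \in G_{n,i}} t^{\odes_B(\pi')}s^{\edes_B(\pi')}q^{\inv_B(\pi')}$, the lemma says $g_i = c_i \,\dfrac{B_i(s,t,q)\,B_n(1,q)}{B_i(1,q)\,[n-i]_q!} + (1-c_i) g_{i-1}$, where $c_i = t$ if $i$ is odd and $c_i = s$ if $i$ is even. Solving this linear recurrence by repeated substitution yields
\[
g_n = \sum_{i=0}^{n} \Bigl(\prod_{j=i+1}^{n}(1-c_j)\Bigr) c_i\, \frac{B_i(s,t,q)\,B_n(1,q)}{B_i(1,q)\,[n-i]_q!} \;+\; \Bigl(\prod_{j=0}^{n}(1-c_j)\Bigr) g_{-1},
\]
with $g_{-1}=1$. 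Now one substitutes the pattern of the $c_j$'s: along the descending chain $n, n-1, \ldots, i+1$ the values $(1-c_j)$ alternate between $(1-t)$ and $(1-s)$, so the product $\prod_{j=i+1}^n (1-c_j)$ is a monomial $(1-t)^a(1-s)^b$ whose exponents $a,b$ depend only on the parities of $n$ and $i$, and are counted by how many odd (resp. even) indices lie in $\{i+1,\ldots,n\}$. Dividing through by $B_n(1,q)$ and re-indexing the sum by $r$ via $i = n-2r-1$ (the terms where $c_i = t$ or $s$ according to parity, which survive in the first sum of each displayed formula) and $i = n-2r$ (the terms contributing to the second sum), one matches exactly the two cases $n=2k$ and $n=2k+1$ in the statement, with the leftover $\prod_{j=0}^n(1-c_j)/B_n(1,q)$ becoming $(1-t)^k(1-s)^{k+1}/B_n(1,q)$ when $n=2k$ and $(1-t)^{k+1}(1-s)^{k+1}/B_n(1,q)$ when $n=2k+1$.

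The only genuinely fiddly part — and the step I would write out most carefully — is the bookkeeping of the exponents of $(1-t)$ and $(1-s)$ as functions of the parity of $i$ relative to $n$, together with checking the boundary behaviour at $i=0$ and $i=-1$ so that the $r=0$ terms and the leftover term come out with the exact exponents claimed (in particular the slightly asymmetric placement of $(1-s)^{r+1}$ versus $(1-s)^r$ between the even-$n$ and odd-$n$ formulas). This is a purely elementary induction/telescoping argument once Lemma~\ref{lem:passing_Gni_to_Gniminus1} is in hand; no new combinatorial input is needed. Finally, setting $q=1$, $s=t$ collapses both formulas to Theorem~\ref{thm:type-b-foata-desarm}, which serves as a useful consistency check on the exponent bookkeeping.
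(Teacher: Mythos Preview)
Your proposal is correct and follows essentially the same route as the paper: both proofs iterate Lemma~\ref{lem:passing_Gni_to_Gniminus1} from the top (the paper starts at $G_{n,n-1}$, you at $G_{n,n}$, but these coincide since the condition on the last $0$ or $1$ entries is vacuous) down to $G_{n,-1}$, telescoping to obtain the stated recurrence. The only cosmetic difference is that you package the iteration into a single closed formula for $g_n$ and then read off the exponents, whereas the paper writes out the first few steps explicitly before recording the final expression.
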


\begin{proof}
Recall that $G_{n,i}$ is the set of signed permutations whose rightmost 
$(n-i)$ entries form an increasing run.  Thus, $G_{n,n-1} = \BB_n$. 
Further, $G_{n,-1}$ is just the signed permutation $\pi=0,1,2, \cdots, n.$
Let $n$ be even. By repeatedly applying 
\eqref{eqn:passing_Gni_to_Gniminus1ieven-B} and 
\eqref{eqn:passing_Gni_to_Gniminus1iodd-B} we have 
\begin{eqnarray*}
B_{n}(s,t,q)	&=& \sum_{\pi \in G_{n,n-1}} t^{\odes_B(\pi)}s^{\edes_B(\pi)}q^{\inv_B(\pi)} \\
&=&t\frac{B_{n-1}(s,t,q)B_n(1,q)}{B_{n-1}(1,q)[1]_q!}+(1-t)\Bigg(\sum_{\pi \in G_{n,n-2}}  t^{\odes_B(\pi)}s^{\edes_B(\pi)}q^{\inv_B(\pi)}\Bigg) \\
&=&t\frac{B_{n-1}(s,t,q)B_n(1,q)}{B_{n-1}(1,q)[1]_q!} +(1-t) s\frac{B_{n-2}(s,t,q)B_n(1,q)}{B_{n-2}(1,q)[2]_q!}\\& & +(1-t)(1-s)\Big(\sum_{\pi \in G_{n,n-3}} t^{\odes_B(\pi)}s^{\edes_B(\pi)}q^{\inv_B(\pi)}\Big)  \\
& = & 
(1-t)^{k}(1-s)^{k+1}+\sum_{r=0}^{ k-1}  t(1-t)^r(1-s)^{r+1} \frac{B_{n-2r-1}(s,t,q)B_n(1,q)}{B_{n-2r-1}(1,q)[2r+1]_q!} \nonumber\\
& & + \sum_{r=0}^{k} s(1-t)^{r}(1-s)^{r}\frac{B_{n-2r}(s,t,q)B_n(1,q)}{B_{n-2r}(1,q)[2r]_q!}.
\end{eqnarray*}	
This completes the proof of \eqref{eqn:typeb_rec_even}. We now 
consider the case when $n$ is odd. Here, we will get
\begin{eqnarray*}
B_n(s,t,q) &=& \sum_{\pi \in G_{n,n-1}} t^{\odes_B(\pi)}s^{\edes_B(\pi)}q^{\inv_B(\pi)} \\
&=&s\frac{B_{n-1}(s,t,q)}{B_{n-1}(1,q)[1]_q!}+(1-s)\Bigg(\sum_{\pi \in G_{n,n-2}}  t^{\odes_B(\pi)}s^{\edes_B(\pi)}q^{\inv_B(\pi)}\Bigg) \\
&=&s\frac{B_{n-1}(s,t,q)B_n(1,q)}{B_{n-1}(1,q)[1]_q!}+(1-s) t\frac{B_{n-2}(s,t,q)}{B_{n-2}(1,q)[2]_q!}+ \\
& & +(1-s) (1-t)\Big(\sum_{\pi \in G_{n,n-3}} t^{\odes_B(\pi)}s^{\edes_B(\pi)}q^{\inv_B(\pi)}\Big). 
\end{eqnarray*}	
Continuing as in the case when $n$ was even, completes the 
proof of \eqref{eqn:typeb_rec_odd} and hence completes the proof 
of Theorem \ref{thm:typeb_q_rec}.
\end{proof}

\subsection{Type B Generating Functions}
\label{subsec:type_b_gen_fns}
We recast Theorem \ref{thm:typeb_q_rec} in the language of 
egfs to prove Theorem \ref{thm:from_carlitz_scoville_typeb_eulerian_q-anal}.
Recall our definitions from
\eqref{eqn:type-b-defns-odd-even-biv-eulerian}.

%

\begin{proof}[Proof of Theorem 
	\ref{thm:from_carlitz_scoville_typeb_eulerian_q-anal}]
	For positive integers $n=2k$, we have 
\begin{eqnarray}
\label{eqn:bsum1}
\frac{B_{n}(s,t,q)u^{2k}}{B_{n}(1,q)}& =& 
\frac{(1-t)^{k}(1-s)^{k+1}u^{2k}}{B_{n}(1,q)}+ 
\sum_{r=0}^{k} \bigg(\frac{(1-t)^{r}(1-s)^{r}u^{2r}}{[2r]_q!}\bigg)
\bigg(\frac{sB_{n-2r}(s,t,q)u^{n-2r}}{B_{n-2r}(1,q)}\bigg) \nonumber \\ 
& & + \sum_{r=0}^{ k-1} \bigg(\frac{(1-t)^r(1-s)^{r+1}u^{2r+1}}{[2r+1]_q!}\bigg)
\bigg(\frac{ tB_{n-2r-1}(s,t,q)u^{n-2r-1}}{B_{n-2r-1}(1,q)}\bigg).
\end{eqnarray} 
When $n=2k+1$, we have  
\begin{eqnarray} 
\label{eqn:bsum2}
\frac{B_{n}(s,t,q)u^{2k+1}}{B_n(1,q)} & =& 
\frac{(1-t)^{k+1}(1-s)^{k+1}u^{2k+1}}{B_n(1,q)} \nonumber \\ 
& &  +  \sum_{r=0}^{k} \bigg(\frac{(1-t)^{r}(1-s)^r u^{2r}}{[2r]_q!}\bigg)
\bigg(\frac{tB_{n-2r}(s,t,q)u^{n-2r}}{B_{n-2r}(1,q)}\bigg) \nonumber  \\ 
& &+  \sum_{r=0}^{k} \bigg(\frac{(1-t)^{r+1}(1-s)^{r}u^{2r+1}}{[2r+1]_q!}\bigg)
\bigg( \frac{sB_{n-2r-1}(s,t,q)u^{n-2r-1}}{B_{n-2r-1}(1,q)}\bigg) . 
\end{eqnarray}
Summing \eqref{eqn:bsum1},\eqref{eqn:bsum2} over $k\geq 0$ yields 
	\begin{eqnarray}
	\label{eqn:typeb-egf-solve1}
	(1-s)\cosh_B(Mu;q)+M\sinh_B(Mu;q) & = & \mathcal{B}_0\bigg(1-s\cosh_q(Mu)-\frac{s\sinh_q(Mu)}{L}\bigg)\nonumber \\
	& & +\mathcal{B}_1(1-t\cosh_q(Mu)-tL\sinh_q(Mu)).
	\end{eqnarray}
	where $L=\sqrt{(1-s)/(1-t)}$, $\mathcal{B}_0=H_0(s,t,q,u)$ and $\mathcal{B}_1=H_1(s,t,q,u).$
	Changing $u$ to $-u$ gives us
\begin{eqnarray}
\label{eqn:typeb-egf-solve2}
	(1-s)\cosh_B(Mu;q)-M\sinh_B(Mu;q)& =& \mathcal{B}_0\bigg(1-s\cosh_q(Mu)+\frac{s\sinh_q(Mu)}{L}\bigg)\nonumber \\ & & -\mathcal{B}_1(1-t\cosh_q(Mu)+tL\sinh_q(Mu)).
	\end{eqnarray}
	Solving 
\eqref{eqn:typeb-egf-solve1} and 
\eqref{eqn:typeb-egf-solve2}, completes the proof.
\end{proof}

\blue{
\begin{remark}
\label{rem:equiv_type-b}
We show that setting $q=1$ in Theorem 
\ref{thm:from_carlitz_scoville_typeb_eulerian_q-anal} gives 
Theorem \ref{thm:pan_zeng_biv_eul_egf}. 
We claim that 
$H_0(s,t,1,u)=H_0(s,t,\frac{u}{2})$ and likewise
$H_1(s,t,1,u)=H_1(s,t,\frac{u}{2}).$  
As $B_n(1,1)=2^nn!$, $\cosh_B(u;1)=\cosh(\frac{u}{2})$, 
$\sinh_B(u;1)=\sinh(\frac{u}{2})$, and 
$\mathrm{e}_q(u)\mathrm{e}_q(-u)|_{q=1}=1$, 
setting $q=1$ on the right hand side of 
\eqref{eqn:from_carlitz_scoville_typeb_eulerian_q-anal_even} 
gives the right 
hand side of \eqref{eqn:typeb-eventerms-pz}. 
Similarly, setting $q=1$ on the right hand side of 
\eqref{eqn:from_carlitz_scoville_typeb_eulerian_q-anal_odd}, 
we get the right hand side of \eqref{eqn: typeb-oddterms-pz}. 
\end{remark}
}

%
%
%

We are now in a position to prove Theorem
\ref{thm:from_carlitz_scoville_typeb_alt_eulerian_q-anal}.

\begin{proof}[Proof of Theorem 
\ref{thm:from_carlitz_scoville_typeb_alt_eulerian_q-anal}]
	If $B_{2k}(s,t,q)$ is the polynomial defined in 
\eqref{eqn:defn_eulerian_bi_tri_variant}, it is easy to see that  
	$\displaystyle \hat{B}_{2k}(s,t,q)  = 
	s^{k} B_{2k}(1/s,t,q)$.
	Therefore, 
	\begin{eqnarray}
	\hat{H}_0(s,t,q,u)&=&H_0\Big(\frac{1}{s},t,q,\sqrt{s}u\Big)\nonumber\\
	&=& \frac{(s-1)\bigg( (1-t\cos_q(Mu))\cos_{B}(Mu;q)-t\sin_q(Mu)\sin_{B}(Mu;q)\bigg)}{s+t\mathrm{e}_q(iMu)\mathrm{e}_q(-iMu)-(ts+1)\cos_q(Mu)}.
	\end{eqnarray}
As the proof is complete, we move on to the case when $n=2k+1$ is odd.  
Clearly, in this case, we have 
$\hat{B}_{2k+1}(s,t,q)
	=  s^{k+1}B_{2k+1}\bigg(\frac{1}{s},t,q \bigg).$
	Therefore,
	\begin{eqnarray}
	\hat{H}_1(s,t,q,u) &=& \sqrt{s}H_1(\frac{1}{s},t,q,\sqrt{s}u)\nonumber\\
	&=& \frac{-M\bigg((s-\cos_q(Mu)\sin_{B}(Mu;q)+\sin_q(Mu)\cos_{B}(Mu;q) \bigg)}{s+t\mathrm{e}_q(iMu)\mathrm{e}_q(-iMu)-(ts+1)\cos_q(Mu)}.
	\end{eqnarray}
	This completes the proof.
\end{proof}

\begin{corollary}
	We have the following egf for the type $\BB$ bivariate alternating descent polynomials:
	\begin{equation}
	\sum_{n\geq 0}\hat{B}_n(s,t) \frac{u^n}{n!}=\frac{-(s-1)(t-1)\cos(Mu)-M(s+1)\sin(Mu)}{s+t-(ts+1)\cos(2Mu)}.
	\end{equation}
\end{corollary}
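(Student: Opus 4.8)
The plan is to add the two equations of Theorem~\ref{thm:pan_zeng_biv_alt_egf} together, since the left-hand side of the desired egf is
$$\sum_{n\geq 0}\hat{B}_n(s,t)\frac{u^n}{n!} = \hat{H}_0(s,t,u) + \hat{H}_1(s,t,u),$$
by the very definitions of $\hat{H}_0$ and $\hat{H}_1$ as the even-index and odd-index halves of this generating function. Equivalently, one can set $q=1$ in Theorem~\ref{thm:from_carlitz_scoville_typeb_alt_eulerian_q-anal} and add $\widehat{H}_0(s,t,1,u)+\widehat{H}_1(s,t,1,u)$; I would state explicitly which route is taken, but the $q=1$ route mirrors Remark~\ref{rem:equiv_type-b} and is the cleaner choice given that we have already recorded how the $q$-trigonometric functions specialize.

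The key steps, in order, are as follows. First, recall from (the $q=1$ analogue of) Remark~\ref{rem:equiv_type-b} that $\cos_B(u;1)=\cos(u/2)$, $\sin_B(u;1)=\sin(u/2)$, and $\mathrm{e}_q(iMu)\mathrm{e}_q(-iMu)|_{q=1}=1$, while $\cos_q(Mu)|_{q=1}=\cos(Mu)$ and $\cos_q(2Mu)$ arises from the $\mathrm{e}_q(iMu)\mathrm{e}_q(-iMu)$-type product via the double-angle identity; thus the common denominator in Theorem~\ref{thm:from_carlitz_scoville_typeb_alt_eulerian_q-anal} collapses to $s+t-(ts+1)\cos(2Mu)$, matching the shared denominator of the two displays in Theorem~\ref{thm:pan_zeng_biv_alt_egf}. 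Second, since both $\hat{H}_0$ and $\hat{H}_1$ already share this denominator, adding them simply adds numerators:
$$\hat{H}_0(s,t,u)+\hat{H}_1(s,t,u) = \frac{-(s-1)(t-1)\cos(Mu) - M(s+1)\sin(Mu)}{s+t-(ts+1)\cos(2Mu)},$$
which is exactly the claimed identity. Third, I would note that $M^2=\alpha=(1-s)(1-t)$ from~\eqref{eqn:defn_M}, so no further simplification of $M$ is needed.

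I do not expect any real obstacle here: this is a routine consolidation of the even and odd parts of a single generating function into one expression, and the denominators already agree, so there is nothing to clear. The only point requiring a sentence of care is the specialization of the $q$-deformed cosine products at $q=1$ to ordinary $\cos(Mu)$ and $\cos(2Mu)$ (via $\mathrm{e}_1(x)=e^x$ and the standard double-angle formula $2\cos^2\theta-1=\cos 2\theta$), and this is already essentially carried out in Remark~\ref{rem:equiv_type-b} for the type~B Eulerian case; the alternating case is identical in spirit. Hence the corollary follows immediately by summing the two equations of Theorem~\ref{thm:pan_zeng_biv_alt_egf}.
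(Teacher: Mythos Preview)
Your approach is correct and matches the paper's (implicit) argument: the corollary is stated without proof immediately after Theorem~\ref{thm:from_carlitz_scoville_typeb_alt_eulerian_q-anal}, and it indeed follows at once by adding the two displays of Theorem~\ref{thm:pan_zeng_biv_alt_egf}, which already share the common denominator $s+t-(ts+1)\cos(2Mu)$. One small slip in your side remark on the $q=1$ route: from $\exp_B(u;1)=e^{u/2}$ one obtains $\sin_B(u;1)=i\sin(u/2)$ rather than $\sin(u/2)$, but this does not affect your primary (and cleaner) route through Theorem~\ref{thm:pan_zeng_biv_alt_egf}.
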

\begin{corollary}
	We get an alternate proof of the following egf for the type B 
	alternating descent polynomials (see also 
	\cite{ma-fang-mansour-yeh-alt_eul_poly_left_pk} and 
	\cite{pan-new_combin_formula_alt_des}):
	\begin{equation}
	\sum_{n\geq 0}\hat{B}_n(t) \frac{u^n}{n!}=\frac{-(t-1)^2\cos((1-t)u)+(t^2-1)\sin((1-t)u)}{2s-(t^2+1)\cos(2(1-t)u)}.
	\end{equation}
\end{corollary}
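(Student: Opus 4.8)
The plan is to obtain this identity as the specialization $s=t$ of the corollary immediately preceding it, which records the egf of the bivariate type B alternating descent polynomials $\hat{B}_n(s,t)$. Recall from the discussion following \eqref{eqn:defn_biv_eul_alt_eul} that setting $s=t$ in $\hat{B}_n(s,t)=\sum_{\pi\in\BB_n}s^{\easc_B(\pi)}t^{\odes_B(\pi)}$ merges the two exponents into the single type B alternating descent count, so that $\hat{B}_n(t)=\hat{B}_n(t,t)$. Consequently
\[
\sum_{n\geq 0}\hat{B}_n(t)\frac{u^n}{n!}=\Bigl(\sum_{n\geq 0}\hat{B}_n(s,t)\frac{u^n}{n!}\Bigr)\Big|_{s=t},
\]
and it remains only to perform this substitution in the right-hand side of the previous corollary.

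That substitution is routine. With $s=t$ one has $M=\sqrt{(1-s)(1-t)}=\sqrt{(1-t)^2}$; the right-hand side is unchanged under $M\mapsto-M$ (cosines are even, and $M\sin(Mu)$ is even in $M$), so we may take $M=1-t$ and write $Mu=(1-t)u$. The numerator $-(s-1)(t-1)\cos(Mu)-M(s+1)\sin(Mu)$ becomes $-(t-1)^2\cos((1-t)u)-(1-t)(t+1)\sin((1-t)u)$, and since $-(1-t)(t+1)=t^2-1$ this equals $-(t-1)^2\cos((1-t)u)+(t^2-1)\sin((1-t)u)$. The denominator $s+t-(ts+1)\cos(2Mu)$ becomes $2t-(t^2+1)\cos\bigl(2(1-t)u\bigr)$ (so the ``$2s$'' printed in the displayed statement should be read as ``$2t$''). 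Assembling the two pieces gives the asserted egf.

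I do not anticipate any real obstacle here; the only two points needing a sentence of care are the identification $\hat{B}_n(t)=\hat{B}_n(t,t)$, which is already recorded in the text just after \eqref{eqn:defn_biv_eul_alt_eul}, and the choice of branch of $M$ at $s=t$, which is immaterial by the evenness observation above. As an alternative route that avoids quoting the preceding corollary, one may set $q=1$ and $s=t$ directly in Theorem \ref{thm:from_carlitz_scoville_typeb_alt_eulerian_q-anal} and add the even and odd parts $\widehat{H}_0$ and $\widehat{H}_1$: since $B_n(1,1)=2^nn!$ and $\mathrm{e}_q(\pm iMu)\big|_{q=1}=\exp(\pm iMu)$, the functions $\cos_B$, $\sin_B$, $\cos_q$, $\sin_q$ collapse to ordinary $\cos$ and $\sin$, and the same brief computation yields the result.
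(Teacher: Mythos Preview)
Your proposal is correct and is exactly the intended argument: the paper states this corollary immediately after the bivariate egf for $\hat{B}_n(s,t)$ with no separate proof, so it is meant to be read as the specialization $s=t$ of the preceding corollary, which is precisely what you carry out. Your observation that the printed ``$2s$'' in the denominator should be ``$2t$'' is also correct.
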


\blue{As mentioned in Section \ref{sec:intro} though we consider a two variable
enumerator, we can get a four variable version and hence a type B 
counterpart of Theorem \ref{thm:carlitz_scoville}.   }
\blue{
Define variables $s_0$, $t_0$, $s_1$ and $t_1$ 
to keep track of even ascents, even descents, 
odd ascents and odd descents respectively.  
Let  $m=\sqrt{(s_0-t_0)(s_1-t_1)}$.  
Define the five 
variable distribution 
$$B_n(s_0,s_1,t_0,t_1,q)=\sum_{w\in \BB_n} 
s_0^{\eascb(w)}s_1^{\oasc_{B}(w)}t_0^{\edesb(w)}
t_1^{\odesb(w)}q^{\invb(w)}.$$ 
Further, define the generating functions 
\begin{eqnarray}
\label{eqn:defn_q_egf-type-b-five-var-even}
H_0(s_0,s_1,t_0,t_1,q,u)& =& \sum_{k\geq 0}B_{2k}(s_0,s_1,t_0,t_1,q)\frac{u^{2k}}{B_{2k}(1,q)}, \\
\label{eqn:defn_q_egf-type-b-five-var-odd}
H_1(s_0,s_1,t_0,t_1,q,u)& =& \sum_{k\geq 0}B_{2k+1}(s_0,s_1,t_0,t_1,q)\frac{u^{2k+1}}{B_{2k+1}(1,q)}.
\end{eqnarray}
}

\blue{
\begin{theorem}
\label{q-egf-type-b-counterpart-pan-zeng}
We have the egfs
\begin{eqnarray*}
\frac{H_0(s_0,s_1,t_0,t_1,q,u)}{s_0-t_0} & = & \frac{\bigg(\big(s_1-t_1\cosh_q(mu)\big){\cosh_B(mu;q)}+t_1\sinh_q(mu){\sinh_B(mu;q)}\bigg)}{s_0s_1-(t_0s_1+s_0t_1)\cosh_q(mu)+t_0t_1\mathrm{e}_q(mu)\mathrm{e}_q(-mu)}, 
\\
H_1(s_0,s_1,t_0,t_1,q,u) & = & \frac{m\bigg(\big(s_0-t_0\cosh_q(mu)\big){\sinh_B(mu;q)}+t_0\sinh_q(mu){\cosh_B(mu;q)}\bigg)}{s_0s_1-(s_1t_0+t_1s_0)\cosh_q(mu)+t_0 t_1\mathrm{e}_q(mu)\mathrm{e}_q(-mu)}. 
\end{eqnarray*}
\end{theorem}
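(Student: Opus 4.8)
The plan is to mimic the two-variable argument used to prove Theorem \ref{thm:from_carlitz_scoville_typeb_eulerian_q-anal}, now carrying five variables. First I would establish the five-variable analogue of Lemma \ref{lem:passing_Gni_to_Gniminus1}: passing from $G_{n,i}$ to $G_{n,i-1}$, the position $i$ (which is the last non-trivial position of an element of $G_{n,i}\setminus G_{n,i-1}$) is \emph{either} a descent or an ascent, and its parity is fixed by $i$. So when $i$ is odd each $\pi'\in F_{n,i}=G_{n,i}\setminus G_{n,i-1}$ has, relative to its restriction, an extra odd ascent if $\pi'_i<\pi'_{i+1}$ and an extra odd descent otherwise; when $i$ is even the same happens with "even" in place of "odd". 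Combining this with Corollary \ref{cor:afterlemmacor2-B} (which carries over verbatim, since it only concerns $\inv_B$ and $f$), one gets, for $i$ odd,
\begin{eqnarray*}
\sum_{\pi'\in G_{n,i}} W(\pi') &=& (s_1+t_1)\,\frac{B_i(s_0,s_1,t_0,t_1,q)B_n(1,q)}{B_i(1,q)[n-i]_q!}\\
& & {}+ (1-s_1-t_1)\sum_{\pi'\in G_{n,i-1}} W(\pi'),
\end{eqnarray*}
where $W(\pi')=s_0^{\eascb}s_1^{\oascb}t_0^{\edesb}t_1^{\odesb}q^{\invb}$ evaluated at $\pi'$; but a cleaner bookkeeping is to track the pair $(\text{odd asc},\text{odd des})$ weighted by $(s_1,t_1)$ and accept the weight $s_1$ or $t_1$ according to whether an ascent or descent is created. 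This is the natural five-variable refinement of equations \eqref{eqn:passing_Gni_to_Gniminus1iodd-B} and \eqref{eqn:passing_Gni_to_Gniminus1ieven-B}, and I expect the bookkeeping here — keeping the asc/des alternative straight through the induction — to be the main technical obstacle; everything downstream is formal manipulation.

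Next I would iterate this recursion from $G_{n,n-1}=\BB_n$ down to $G_{n,-1}$, exactly as in the proof of Theorem \ref{thm:typeb_q_rec}, to obtain a five-variable recurrence for $B_n(s_0,s_1,t_0,t_1,q)/B_n(1,q)$ split into even-$n$ and odd-$n$ cases, with the roles of $(s_0,t_0)$ and $(s_1,t_1)$ alternating at even and odd steps. Multiplying by $u^n$, summing over all $n$, and grouping the even- and odd-indexed terms produces (as in \eqref{eqn:bsum1}, \eqref{eqn:bsum2}) two closed functional equations relating $H_0$ and $H_1$. Here the relevant scalar is $m=\sqrt{(s_0-t_0)(s_1-t_1)}$, since the product of the "$1-$(weight)" factors across an odd step and an even step is $(1-s_0-t_0+\cdots)$ type expressions whose geometric-like resummation is governed by $\cosh_q(mu)$, $\sinh_q(mu)$, $\mathrm{e}_q(\pm mu)$ and $\cosh_B(mu;q)$, $\sinh_B(mu;q)$. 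Concretely I expect to arrive at an identity of the shape
\begin{eqnarray*}
s_1\cosh_B(mu;q)+\frac{m}{\,}\sinh_B(mu;q)
&=& H_0\!\left(s_0 - t_0\cosh_q(mu) - \tfrac{t_0}{\ell}\sinh_q(mu)\right)\\
& & {}+ H_1\!\left(s_1 - t_1\cosh_q(mu) - t_1\ell\,\sinh_q(mu)\right),
\end{eqnarray*}
with $\ell=\sqrt{(s_0-t_0)/(s_1-t_1)}$, exactly paralleling \eqref{eqn:typeb-egf-solve1}; the precise constants $s_1$ on the left and the bracket contents are to be read off from the recurrence.

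Finally, replacing $u$ by $-u$ gives a companion equation (the $\cosh_B,\cosh_q,\mathrm{e}_q(\pm\cdot)$ terms are even, the $\sinh_B,\sinh_q$ terms odd), and solving the resulting $2\times2$ linear system for $H_0$ and $H_1$ yields the stated formulas; the common denominator $s_0s_1-(t_0s_1+s_0t_1)\cosh_q(mu)+t_0t_1\mathrm{e}_q(mu)\mathrm{e}_q(-mu)$ is precisely the determinant of that system. As a consistency check, setting $s_0=t_0=1$ (equivalently forgetting the parity split on ascents, i.e. specializing $s_1\mapsto s$, $t_1\mapsto t$ after the substitutions forced by the "two statistics determine the other two" principle from the introduction) should recover Theorem \ref{thm:from_carlitz_scoville_typeb_eulerian_q-anal}, and $q=1$ together with the appropriate substitution should recover the type B specialization of Theorem \ref{thm:carlitz_scoville}; I would include the former check explicitly since it validates the constants.
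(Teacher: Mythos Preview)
Your approach would work in principle, but it is far more laborious than what the paper does, and you have actually already noticed the key idea without exploiting it. You mention the ``two statistics determine the other two'' principle only as a final consistency check; the paper uses it as the \emph{entire} proof.

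Concretely: for $\pi\in\BB_n$, every position $i\in[0,n-1]$ is either an ascent or a descent, so $\eascb(\pi)+\edesb(\pi)$ equals the number of even positions in $[0,n-1]$ and $\oascb(\pi)+\odesb(\pi)$ equals the number of odd positions. Hence $B_{2k}(s_0,s_1,t_0,t_1,q)=s_0^{k}s_1^{k}\,B_{2k}(t_0/s_0,\,t_1/s_1,\,q)$ and $B_{2k+1}(s_0,s_1,t_0,t_1,q)=s_0^{k+1}s_1^{k}\,B_{2k+1}(t_0/s_0,\,t_1/s_1,\,q)$. Summing against $u^n/B_n(1,q)$ gives
\[
H_0(s_0,s_1,t_0,t_1,q,u)=H_0\!\Big(\tfrac{t_0}{s_0},\tfrac{t_1}{s_1},q,\sqrt{s_0s_1}\,u\Big),\qquad
H_1(s_0,s_1,t_0,t_1,q,u)=\tfrac{\sqrt{s_0}}{\sqrt{s_1}}\,H_1\!\Big(\tfrac{t_0}{s_0},\tfrac{t_1}{s_1},q,\sqrt{s_0s_1}\,u\Big),
\]
and plugging these substitutions (under which $M\mapsto m/\sqrt{s_0s_1}$, so $Mu\mapsto mu$) into Theorem~\ref{thm:from_carlitz_scoville_typeb_eulerian_q-anal} yields the stated formulas immediately. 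No new combinatorics, no new recursion, no $2\times 2$ system.

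So your route---re-proving Lemma~\ref{lem:passing_Gni_to_Gniminus1} and Theorem~\ref{thm:typeb_q_rec} with five variables and then solving functional equations---is a genuine alternative, and it would give an independent derivation not relying on Theorem~\ref{thm:from_carlitz_scoville_typeb_eulerian_q-anal}. But it buys nothing here: the extra ascent variables are redundant, and the paper simply substitutes them away. (Incidentally, your displayed five-variable version of Lemma~\ref{lem:passing_Gni_to_Gniminus1} is not quite right as written, since the ascents at positions $i{+}1,\ldots,n{-}1$ of $\pi'$ also carry $s_0,s_1$ weight that your $W(\pi)$ does not see; you acknowledge this bookkeeping issue, and it is fixable, but it is another reason to prefer the paper's shortcut.)
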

\begin{proof}
Recalling \eqref{eqn:type-b-defns-odd-even-biv-eulerian}, 
it is easy to see that  
\begin{eqnarray*}
H_0(s_0,s_1,t_0,t_1,q,u)& = & 
H_0\Bigg(\frac{t_0}{s_0},\frac{t_1}{s_1},q,\sqrt{s_0s_1}u\Bigg), \\
H_1(s_0,s_1,t_0,t_1,q,u)& = & 
\frac{\sqrt{s_0}}{\sqrt{s_1}}H_1\Bigg(\frac{t_0}{s_0},
\frac{t_1}{s_1},q,\sqrt{s_0s_1}u\Bigg).
\end{eqnarray*}
The proof is complete.
\end{proof}
}

\subsection{$q$-analogues of Hyatt's recurrences and symmetries}
\label{subsec:hyatt_type_b_q_anal}
Hyatt gives a proof of Theorem \ref{thm:hyatt_recurrences} for 
the polynomials $B_n^+(t)$ by considering a statistic 
$\maxdrop_B$.  We give an inclusion exclusion argument.

\begin{proof}[Proof of Theorem \ref{thm:typeb_plus}]
	We prove the two recurrences separately. 
	Let $\hat{A}_k$ be the set of signed permutations in $\BB_n$ such 
	that the last $k+1$ elements are positive and are arranged in 
	descending order.  Thus, the set $\hat{A}_{k-1}-\hat{A}_k$ is
	the set of signed permutations with no descent in the $(n-k)$-th 
	position and with their last $k$ elements being positive and descending. 
	Define 
$$A_k(s,t,q)=\sum_{w \in \hat{A}_k} t^{\odesb(w)}s^{\edesb(w)}q^{\invb(w)}.$$
We abbreviate $A_k(s,t,q)$ as $A_k$ for the rest of this proof for 
better readability.  When $n$ is even, we claim that
	\begin{eqnarray}
	\label{eqn:typeb_even_even}
	q^{\binom{2r}{2}}\qbinom{n}{2r} B_{n-2r}(s,t,q)s^rt^r & = & sA_{2r-1}-(s-1)A_{2r},\\
	\label{eqn:typeb_even_odd}
	q^{\binom{2r+1}{2}}\binom{n}{2r+1}_qB_{n-2r-1}(s,t,q)s^rt^{r+1}& = &tA_{2r}-(t-1)A_{2r+1}.
	\end{eqnarray}
	When $n$ is odd, we claim that
	\begin{eqnarray}
	\label{eqn:typeb_odd_even}
	q^{\binom{2r}{2}}\binom{n}{2r}_qB_{n-2r}(s,t,q)s^{r}t^r& =& tA_{2r-1}-(t-1)A_{2r},\\
	\label{eqn:typeb_odd_odd}
	q^{\binom{2r+1}{2}}\binom{n}{2r+1}_qB_{n-2r-1}(s,t,q)s^{r+1}t^{r}& =& sA_{2r}-(s-1)A_{2r+1}.
	\end{eqnarray}
	We only prove \eqref{eqn:typeb_even_even}. The proofs of 
	\eqref{eqn:typeb_even_odd}, \eqref{eqn:typeb_odd_even} and 
	\eqref{eqn:typeb_odd_odd} follow from a very similar argument.
	Recall that $\binom{[n]}{n-i}$ is the set of all $(n-i)$-sized subsets of 
	$[n]$. Given $A\in \binom{[n]}{n-i}$, we arrange its elements in 
	descending order and list them as $a_1,a_2,\cdots,a_{n-i}$ 
	with  $a_1>a_2>\cdots>a_{n-i}>0$.
	Define a new juxtaposition map 
	$f':\BB_{i}\times \binom{[n]}{n-i}\rightarrow \hat{A}_{n-i-1}$ that 
	takes $(\psi,A)$ to the signed permutation $\psi_{[n]-A},a_1,a_2,\cdots,a_{n-i}$, i.e. $$f'(\psi,A)=\psi_{[n]-A},a_1,a_2,\cdots,a_{n-i}.$$
	
	It is easy to see that $f'$ is a bijection from 
	$\BB_{i}\times \binom{[n]}{n-i}$ to $\hat{A}_{n-i-1}$.
	We define $\invb([X],[Y])$ to be the number of inversions 
	that occur between the $X$ and $Y$.
	The LHS of \eqref{eqn:typeb_even_even} is clearly obtained as follows
	\begin{eqnarray*}
		&&\sum_{((\psi,A)\in \BB_{n-2r})\times \binom{[n]}{2r}} t^{\odes_B(\psi)+\odes_B(A)}s^{\edes_B(\psi)+\edes_B(A)}q^{\invb(\psi)+\invb(A)+\invb([\psi],[A])}\\ 
		&=&q^{\binom{2r}{2}}s^{r-1}t^r \sum_{(\psi,A)\in \BB_{n-2r}\times \binom{[n]}{2r}} t^{\odes_B(\psi)}s^{\edes_B(\psi)}q^{\invb(\psi)+\invb([\psi],[A])}\\
		&=& q^{\binom{2r}{2}}s^{r-1}t^r \sum_{\psi \in \BB_{n-2r}}\sum_{A\in \binom{[n]}{2r}}t^{\odes_B(\psi)}s^{\edes_B(\psi)}q^{\invb(\psi)+\invb([\psi],[A])}\\
		&=&q^{\binom{2r}{2}}s^{r-1}t^r \qbinom{n}{2r} B_{n-2r}(s,t,q).
	\end{eqnarray*}
	
	The expression above does not account for the descent occurring 
	at the $(n-2r)$-th position.  Thus, it is off by a factor of 
	$\frac{1}{s}$ on the set $\hat{A}_{2r}$.  Further,  it counts 
	correctly on the set $\hat{A}_{2r-1}-\hat{A}_{2r}$.
	This gives us
	\begin{equation*}
	q^{\binom{2r}{2}}s^{r-1}t^r \qbinom{n}{2r} B_{n-2r}(s,t,q)=A_{2r-1}-A_{2r}+\frac{1}{s}A_{2r},
	\end{equation*}
	which is equivalent to \eqref{eqn:typeb_even_even}.
	Similarly for $2r+1$ , we get \eqref{eqn:typeb_even_odd}
	\begin{equation*}
	q^{\binom{2r+1}{2}} s^rt^{r+1} \qbinom{n}{2r+1} B_{n-2r-1}(s,t,q)=tA_{2r}-(t-1)A_{2r+1}.
	\end{equation*}
	
	Equations \eqref{eqn:typeb_even_even} and 
	\eqref{eqn:typeb_even_odd} gives 
	\begin{eqnarray}
	& &   q^{\binom{2r}{2}}\qbinom{n}{2r} B_{n-2r}(s,t,q)(s-1)^{r-1} (t-1)^r \nonumber \\
	& & =  \bigg(\frac{s-1}{s}\bigg)^{r-1}\bigg(\frac{t-1}{t}\bigg)^rA_{2r-1} -\bigg(\frac{s-1}{s}\bigg)^{r}\bigg(\frac{t-1}{t}\bigg)^{r}A_{2r}, 
	\label{eqn:typeb_even_wantedform} \\
	& &  q^{\binom{2r+1}{2}}\qbinom{n}{2r+1} B_{n-2r-1}(s,t,q)(s-1)^r (t-1)^r \nonumber \\
	\label{eqn:typeb_odd_wantedform}
	& & =  \bigg(\frac{s-1}{s}\bigg)^{r}\bigg(\frac{t-1}{t}\bigg)^rA_{2r} -\bigg(\frac{s-1}{s}\bigg)^{r}\bigg(\frac{t-1}{t}\bigg)^{r+1}A_{2r+1}.
	\end{eqnarray}	
	
	Summing \eqref{eqn:typeb_even_wantedform}, 
	over the indices  $1\leq r \leq \frac{n}{2}$
	and  \eqref{eqn:typeb_odd_wantedform} 
	over the indices $0\leq r\leq \frac{n-2}{2}$, we get 
	\begin{eqnarray*}
		A_0 & =& \sum_{r=0}^{\frac{n}{2}-1}q^{\binom{2r+1}{2}} \qbinom{n}{2r+1} B_{n-2r-1}(s,t,q)(s-1)^r (t-1)^r \\
		& & + \sum_{r=1}^{\frac{n}{2}} q^{\binom{2r}{2}} \qbinom{n}{2r} B_{n-2r}(s,t,q)(s-1)^{r-1} (t-1)^r.
	\end{eqnarray*}
As $\hat{A}_0$ 
is the set of signed permutations with elements having positive
	last element (ie $\BB_n^+$), this completes our proof.
\end{proof}

We recall the polynomials $B_n^+(s,t,q)$ and $B_n^-(s,t,q)$ 
from \eqref{defn:poly_pm_like_hyatt}.   We consider the 
map that flips the sign of all elements below and give
a few properties.

\begin{lemma}
	\label{lem:sgnflipmap} 
	Let $f: \BB_n \rightarrow \BB_n$ be the involution that sends 
	$w=w_1,\cdots,w_n$ to $\overline{w}=\overline{w_1},\cdots,
	\overline{w_n}$. Then, we have the following.
	\begin{enumerate}
		\item When $n=2k+1$, we have $\odesb(w)+\odesb(f(w))=k$ and 
		when $n=2k$, we have $\odesb(w)+\odes_B(f(w))=k$.
		\item When $n=2k+1$, we have $\edesb(w)+\edesb(f(w))=k+1$ and when 
		$n=2k$, we have $\edesb(w)+\edesb(f(w))=k$.  
		\item The sum $\invb(w)+\invb(f(w))=n^2.$
	\end{enumerate}
\end{lemma}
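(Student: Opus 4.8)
The plan is to analyze the sign-flipping involution $f$ position by position. For each $w = w_1, \dots, w_n \in \BB_n$ (with $w_0 = 0$), a descent at position $i \in [n-1]_0$ is the condition $w_i > w_{i+1}$. Applying $f$ replaces $w_i$ by $\overline{w_i} = -w_i$ for $i \geq 1$, but keeps $w_0 = 0$ fixed. So for $i \in [n-1]$ (i.e.\ $i \geq 1$, comparing two genuinely flipped entries), we have $w_i > w_{i+1}$ if and only if $-w_i < -w_{i+1}$, i.e.\ $f(w)_i < f(w)_{i+1}$; hence exactly one of $w$, $f(w)$ has a descent at position $i$. For position $i = 0$, the comparison is $0 > w_1$ in $w$ versus $0 > -w_1$ in $f(w)$, so again exactly one of the two has a descent at position $0$ (whichever has $w_1$ of the appropriate sign), unless $w_1$ could be $0$, which is impossible. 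Thus for \emph{every} position $i \in [n-1]_0$, exactly one of $w$ and $f(w)$ has a descent there. This is the single combinatorial fact from which parts (1) and (2) both follow by counting positions of each parity.

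For part (1), I count odd positions in $[n-1]_0$: these are $1, 3, 5, \dots$. When $n = 2k+1$, the odd positions up to $n-1 = 2k$ are $1, 3, \dots, 2k-1$, of which there are $k$; since each such position contributes a descent to exactly one of $w$, $f(w)$, summing gives $\odesb(w) + \odesb(f(w)) = k$. When $n = 2k$, the odd positions up to $n-1 = 2k-1$ are $1, 3, \dots, 2k-1$, again $k$ of them, giving the same total $k$. For part (2), I count even positions in $[n-1]_0$, namely $0, 2, 4, \dots$. When $n = 2k+1$, the even positions up to $2k$ are $0, 2, \dots, 2k$, which is $k+1$ positions, so $\edesb(w) + \edesb(f(w)) = k+1$; when $n = 2k$, the even positions up to $2k-1$ are $0, 2, \dots, 2k-2$, which is $k$ positions, giving total $k$. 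These match the claimed identities, so parts (1) and (2) are just careful bookkeeping once the position-by-position dichotomy is established.

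For part (3), I would work directly from the definition \eqref{eqn:typeb-inv-defn} of $\inv_B$. Write $\inv_B(w) = \mathrm{I}_1(w) + \mathrm{I}_2(w) + |\Negs(w)|$ where $\mathrm{I}_1(w) = |\{i < j : w_i > w_j\}|$ and $\mathrm{I}_2(w) = |\{i < j : -w_i > w_j\}|$. Under $f$, the pair $(i,j)$ with $i < j$ satisfies $w_i > w_j$ iff $-w_i < -w_j$ iff $\overline{w}_j > \overline{w}_i$, which (since $\overline{w}$ is the flipped word) relates $\mathrm{I}_1(f(w))$ to the condition $w_j < w_i$ on the flipped entries; the cleaner route is to observe that for the pair $(i,j)$, the two conditions "$w_i > w_j$" and "$-w_i > -w_j$" (i.e.\ "$w_j > w_i$") are complementary, and likewise the conditions "$-w_i > w_j$" and "$-(-w_i) > -w_j$" (i.e.\ "$w_i > -w_j$") are complementary; pairing up $\mathrm{I}_1(w)$ with $\mathrm{I}_1(f(w))$ and $\mathrm{I}_2(w)$ with $\mathrm{I}_2(f(w))$ suitably, one finds $\mathrm{I}_1(w) + \mathrm{I}_2(w) + \mathrm{I}_1(f(w)) + \mathrm{I}_2(f(w)) = 2\binom{n}{2} = n(n-1)$, since each of the $\binom{n}{2}$ pairs contributes exactly $2$ to the total (one from an $\mathrm{I}_1$-type and one from an $\mathrm{I}_2$-type count across the two words). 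Finally, for each $i > 0$ exactly one of $w_i$, $\overline{w_i}$ is negative, so $|\Negs(w)| + |\Negs(f(w))| = n$. Adding gives $\inv_B(w) + \inv_B(f(w)) = n(n-1) + n = n^2$, as claimed.

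The main obstacle I anticipate is getting the pairing in part (3) exactly right: the four conditions attached to an ordered pair $(i,j)$ — namely $w_i > w_j$, $w_j > w_i$, $-w_i > w_j$, $w_i > -w_j$ — do not split into two clean complementary pairs as stated, because $w_i > w_j$ and $w_j > w_i$ overlap only trivially (ties are impossible as entries have distinct absolute values... actually $w_i = -w_j$ is possible). I would need to handle the possibility $w_i = -w_j$ separately (then $-w_i > w_j$ and $w_i > -w_j$ are both false, but $-w_i = w_j$ contributes nothing, and exactly one of $w_i > w_j$, $w_j > w_i$ holds, plus in $f(w)$ the flipped entries again satisfy exactly one strict inequality) and verify that the per-pair contribution to $\mathrm{I}_1(w) + \mathrm{I}_2(w) + \mathrm{I}_1(f(w)) + \mathrm{I}_2(f(w))$ is still exactly $2$ in that degenerate case. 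Once that case-check is done, parts (1) and (2) are immediate and part (3) follows by summation.
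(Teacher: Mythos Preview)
Your argument for parts (1) and (2) is exactly the intended one (the paper simply calls these ``straightforward'' and omits them). For part (3), your approach is correct but differs from the paper's. The paper invokes the alternate expression $\invb(w)=\inv(w)+\negsum(w)$, where $\inv(w)=|\{i<j:w_i>w_j\}|$ and $\negsum(w)=\sum_{w_i<0}|w_i|$; then $\inv(w)+\inv(\overline{w})=\binom{n}{2}$ (flipping all signs reverses every comparison) and $\negsum(w)+\negsum(\overline{w})=1+2+\cdots+n=\binom{n+1}{2}$, summing to $n^2$. Your route works directly from the three-term definition \eqref{eqn:typeb-inv-defn} and pairs each of the four conditions on a pair $(i,j)$ into two complementary pairs. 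The paper's route is slightly slicker because it reduces to two very simple complements; your route is more self-contained since it avoids citing the equivalence of the two formulas for $\invb$.

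One correction: your worry about the ``degenerate'' case $w_i=-w_j$ is unfounded and should be removed. In a signed permutation the absolute values $|w_1|,\dots,|w_n|$ are distinct, so $w_i=-w_j$ with $i\neq j$ is impossible. Hence for every pair $i<j$, the conditions $w_i>w_j$ and $w_j>w_i$ are strictly complementary, and likewise $-w_i>w_j$ (i.e.\ $w_i+w_j<0$) and $w_i>-w_j$ (i.e.\ $w_i+w_j>0$) are strictly complementary; each pair contributes exactly $2$ to $\mathrm{I}_1(w)+\mathrm{I}_2(w)+\mathrm{I}_1(f(w))+\mathrm{I}_2(f(w))$ with no case analysis needed.
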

\begin{proof} The proof of the first two asserions are straightforward and hence omitted. For the third part, 
	we recall that  $\invb(w)=\inv(w)+\negsum(w)$ where 
	$\inv(w)= |\{(i,j): 1 \leq i< j \leq n: w_i>w_j\}|$ and $\negsum(w)=\sum_{i \in \Negs(w)}i$. 
	Thus, we have 
	\begin{eqnarray*}
		\invb(w)+\invb(f(w))    &=&\inv(w)+\negsum(w)+\inv(\overline{w})+\negsum(\overline{w})\\
		&=&\inv(w)+\inv(\overline{w})+\negsum(w)+\negsum(\overline{w}) \\
		& = &  \binom{n+1}{2}+\binom{n}{2}=n^2.
	\end{eqnarray*}
	The proof is complete.
\end{proof}

\subsection{Symmetry results}
\label{subsec:symm_type-b}

\begin{theorem}
	\label{thm:typeb_minus}
	For positive integers $n$, we have
	\begin{eqnarray*}
		B_n^-(s,t,q) &  = &  q^{n^2}s^{k+1}t^k B_n^+(s^{-1},t^{-1},q^{-1}) \mbox{ \hspace{6 mm} when $n = 2k+1$, } \\
		B_n^-(s,t,q) & = &q^{n^2}s^{k}t^k B_n^+(s^{-1},t^{-1},q^{-1}) \mbox{ \hspace{10 mm}  when $n=2k$.}
	\end{eqnarray*}
	Therefore, we have 
	\begin{eqnarray*}
		B_n(s,t,q) & = & B_n^+(s,t,q)+q^{n^2}s^{k+1}t^k B_n^+(s^{-1},t^{-1},q^{-1})
		\mbox{ \hspace{3 mm} when $n=2k+1$,}\\
		B_n(s,t,q) & =& B_n^+(s,t,q)+q^{n^2}s^{k}t^k B_n^+(s^{-1},t^{-1},q^{-1})
		\mbox{ \hspace{7 mm} when $n=2k$.}\\
	\end{eqnarray*}
\end{theorem}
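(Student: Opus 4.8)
The plan is to deduce the statement directly from the sign-flipping involution $f$ of Lemma~\ref{lem:sgnflipmap}. First I would note that $f$ restricts to a bijection between $\BB_n^+$ and $\BB_n^-$: writing $w = w_1, \ldots, w_n$, if $w_n > 0$ then $f(w)$ has last entry $\overline{w_n} < 0$, so $f$ maps $\BB_n^+$ into $\BB_n^-$, and since $f$ is an involution on all of $\BB_n$ it is a bijection between these two halves.

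Next I would reindex the defining sum for $B_n^-(s,t,q)$ along $f$, replacing $w \in \BB_n^-$ by $w = f(w')$ with $w'$ ranging over $\BB_n^+$, to get
\[
B_n^-(s,t,q) = \sum_{w' \in \BB_n^+} s^{\edesb(f(w'))}\, t^{\odesb(f(w'))}\, q^{\invb(f(w'))}.
\]
Now I substitute the three identities of Lemma~\ref{lem:sgnflipmap}. When $n = 2k$ these read $\edesb(f(w')) = k - \edesb(w')$, $\odesb(f(w')) = k - \odesb(w')$, and $\invb(f(w')) = n^2 - \invb(w')$; pulling out $q^{n^2} s^k t^k$ leaves precisely $\sum_{w' \in \BB_n^+} (s^{-1})^{\edesb(w')} (t^{-1})^{\odesb(w')} (q^{-1})^{\invb(w')} = B_n^+(s^{-1}, t^{-1}, q^{-1})$, which is the even-$n$ identity. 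When $n = 2k+1$ the only change is $\edesb(f(w')) = (k+1) - \edesb(w')$, so the prefactor becomes $q^{n^2} s^{k+1} t^k$, giving the odd-$n$ identity. Finally, the second pair of identities follows from the disjoint decomposition $\BB_n = \BB_n^+ \sqcup \BB_n^-$, so that $B_n(s,t,q) = B_n^+(s,t,q) + B_n^-(s,t,q)$, into which I plug the formula for $B_n^-$ just obtained.

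I do not expect a genuine obstacle here: all the combinatorial content is packaged in Lemma~\ref{lem:sgnflipmap}, which may be assumed. The only thing demanding a bit of care is the bookkeeping of the parity-dependent constants — the sum of even descents of $w$ and $f(w)$ is $k$ when $n = 2k$ but $k+1$ when $n = 2k+1$, whereas the sum of odd descents is $k$ in both cases and the sum of the type B inversion numbers is $n^2$ regardless of parity — so one must split into the cases $n$ even and $n$ odd \emph{before} extracting the monomial prefactor, and keep the exponents of $s$ and $t$ straight.
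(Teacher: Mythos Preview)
Your proposal is correct and follows essentially the same approach as the paper's own proof: both use the sign-flipping involution of Lemma~\ref{lem:sgnflipmap} restricted to a bijection $\BB_n^+ \to \BB_n^-$, reindex the defining sum for $B_n^-(s,t,q)$, and substitute the parity-dependent identities for $\edesb$, $\odesb$, and $\invb$ to extract the prefactor $q^{n^2}s^{k}t^k$ or $q^{n^2}s^{k+1}t^k$. Your write-up is in fact slightly more explicit than the paper's in justifying why $f$ restricts to such a bijection and in deriving the second pair of identities from the decomposition $\BB_n = \BB_n^+ \sqcup \BB_n^-$.
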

\begin{proof}
	Let $f:\BB_n^+ \rightarrow \BB_n^-$ be the map 
	that sends $w=w_1,\cdots, w_n$ to $\overline{w}=\overline{w_1},\cdots,
	\overline{w_n}$. By Lemma \ref{lem:sgnflipmap}, when $n=2k$, we have
	\begin{eqnarray*}
		\sum_{w\in \BB_n^-} t^{\odesb(w)}s^{\edesb(w)}q^{\invb(w)}&=&\sum_{w\in \BB_n^+} t^{\odesb(f(w))}s^{\edesb(f(w))}q^{\invb(f(w))}\\
		&=&\sum_{w\in \BB_n^+}t^{k-\odesb(w)}s^{k-\edesb(w)}q^{n^2-\invb(w)}\\
		&=&q^{n^2}s^kt^k\sum_{w\in \BB_n^+} t^{-\odesb(w)}s^{-\edesb(w)}q^{-\invb(w)}.
	\end{eqnarray*}
	When $n=2k+1$, we have
	\begin{eqnarray*}
		\sum_{w\in \BB_n^-} t^{\odesb(w)}s^{\edesb(w)}q^{\invb(w)}&=&\sum_{w\in \BB_n^+} t^{\odesb(f(w))}s^{\edesb(f(w))}q^{\invb(f(w))}\\
		&=&\sum_{w\in \BB_n^+} t^{k-\odesb(w)}s^{k+1-\edesb(w)}q^{n^2-\invb(w)}\\
		&=&q^{n^2}s^{k+1}t^k\sum_{w\in \BB_n^+} t^{-\odesb(w)}s^{-\edesb(w)}q^{-\invb(w)}.
	\end{eqnarray*}
	completing the proof.
\end{proof}

In a similar manner, the following result also follows.
\begin{lemma}
	\label{thm: type_b_reciprocal}
	We have 
	\begin{eqnarray*}
		B_{2k}(s,t,q) & = & q^{n^2}s^kt^k B_{2k}\bigg
		(\frac{1}{s},\frac{1}{t},\frac{1}{q}\bigg) \mbox{ \hspace{13 mm}  when $n=2k$, } \\ 
		B_{2k+1}(s,t,q) & = & q^{n^2}s^{k+1}t^k B_{2k+1}\bigg(\frac{1}{s},\frac{1}{t},\frac{1}{q}\bigg) \mbox{ \hspace{5 mm} when $n=2k+1$}.
	\end{eqnarray*}
\end{lemma}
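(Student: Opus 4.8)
The plan is to mimic the proof of Theorem~\ref{thm:typeb_minus}, but to apply the sign-flip involution $f\colon \BB_n \to \BB_n$, $w = w_1,\dots,w_n \mapsto \overline{w_1},\dots,\overline{w_n}$, to the \emph{entire} group $\BB_n$ rather than only to $\BB_n^+$. Since $f$ is a bijection of $\BB_n$ onto itself, we may reindex the defining sum of $B_n(s,t,q)$ from \eqref{eqn:defn_eulerian_bi_tri_variant} by replacing $w$ with $f(w)$, obtaining
\[
B_n(s,t,q) \;=\; \sum_{w\in\BB_n} s^{\edesb(w)}t^{\odesb(w)}q^{\invb(w)} \;=\; \sum_{w\in\BB_n} s^{\edesb(f(w))}t^{\odesb(f(w))}q^{\invb(f(w))}.
\]

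Next I would invoke Lemma~\ref{lem:sgnflipmap}: it gives $\invb(w)+\invb(f(w))=n^2$ in all cases, $\odesb(w)+\odesb(f(w))=k$ in both parities, $\edesb(w)+\edesb(f(w))=k$ when $n=2k$, and $\edesb(w)+\edesb(f(w))=k+1$ when $n=2k+1$. Substituting $\edesb(f(w))=k-\edesb(w)$ (respectively $k+1-\edesb(w)$), $\odesb(f(w))=k-\odesb(w)$, and $\invb(f(w))=n^2-\invb(w)$ into the sum above and pulling out the constant factor $q^{n^2}s^{k}t^{k}$ (respectively $q^{n^2}s^{k+1}t^{k}$) leaves exactly $\sum_{w\in\BB_n} s^{-\edesb(w)}t^{-\odesb(w)}q^{-\invb(w)} = B_n(1/s,1/t,1/q)$, which is the claimed identity in each parity.

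There is essentially no obstacle here, since all the combinatorial content has already been isolated in Lemma~\ref{lem:sgnflipmap}; the only point requiring care is keeping track of the even/odd discrepancy in the $s$-exponent, the extra $+1$ in the odd-length case coming from the position $0$. Alternatively, one can deduce the statement directly from Theorem~\ref{thm:typeb_minus} by noting that applying $f$ in the other direction also yields $B_n^+(s,t,q)=q^{n^2}s^{k+\varepsilon}t^k B_n^-(s^{-1},t^{-1},q^{-1})$, where $\varepsilon\in\{0,1\}$ according to the parity of $n$; adding this to the relation of Theorem~\ref{thm:typeb_minus} and using $B_n=B_n^++B_n^-$ gives the same conclusion.
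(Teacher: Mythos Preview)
Your proposal is correct and is exactly what the paper intends: the paper itself only writes ``In a similar manner, the following result also follows'' after Theorem~\ref{thm:typeb_minus}, and your argument---applying the sign-flip involution of Lemma~\ref{lem:sgnflipmap} to all of $\BB_n$ and reading off the exponents---is precisely that similar manner. The alternative derivation you sketch, combining Theorem~\ref{thm:typeb_minus} with its counterpart obtained by reversing the roles of $\BB_n^+$ and $\BB_n^-$, is also valid and amounts to the same computation.
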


\section{Type D analogues}

Let $H_{n,i}$ be the set of signed permutations $\pi \in \DD_n$ 
such that the last $n-i$ elements of $\pi$ are increasing, that is 
we have $\pi_{i+1} < \pi_{i+2} < \dots <\pi_{n-1} < \pi_n.$ 
Clearly, $|H_{n,i}|= 2^{n-1}\binom{n}{i}i!.$ 

Let $\sigma= \sigma_1, \cdots, \sigma_{n-i} \in \DD_{n-i}$ and 
$(A,\epsilon) \in \sgnbinom{[n]}{i} $ be a signed subset. Moreover, 
let $[n]-A=\{c_1, c_2, \dots, c_{n-i}\}$ be written in ascending order.
Thus, $c_1 < c_2 < \dots < c_{n-i}.$  We define two maps 
$h: \DD_{n-i} \to \DD_{ \{c_1, c_2, \dots, c_{n-i} \} }$ and 
$h_D:\DD_{n-i} \to \BB_{c_1, c_2, \dots, c_{n-i}}-
\DD_{ \{c_1, c_2, \dots, c_{n-i} \} }$ as follows:
$$h(\sigma)= \pi_1, \pi_2, \dots, \pi_{n-i} 
\hspace{2 mm} \mbox{ and } \hspace{2 mm}
h_D(\sigma)= \overline{\pi_{1}}, \pi_2, \dots, \pi_{n-i},$$
where for $1 \leq i \leq n-i$, if $|\sigma_i|=k$ then  
$|\pi_i|=c_k$ and $\pi_i$ has the same sign as $\sigma_i$. 
Both maps $h,h_D$ are clearly bijections and hence invertible. 

If $(A,\epsilon)$ has an even number of negative elements, then by 
inverting the map $h$ on the elements of $[0,n]-A$ and appending 
the elements of $(A,\epsilon)$ in ascending order, we get a 
signed permutation in $H_{n,n-i}$.
Similarly, if $(A,\epsilon)$ has odd number of negative elements, 
then by inverting the map $h_D$ on the elements of $[0,n]-A$ and 
appending the elements of $(A,\epsilon)$ in ascending order, we 
get a signed permutation in $H_{n,n-i}$.

These maps are also invertible, so we have a bijection 
$f_{\DD}: \DD_{n-i} \times \sgnbinom{[n]}{i} \mapsto H_{n,n-i}$ 
defined as follows.  Let $\sigma \in \BB_{n-i}$ and 
$(A,\epsilon) \in \sgnbinom{[n]}{i}$. 

Define
\[f_{\DD}(\sigma,(A,\epsilon))=
 \begin{cases} 
      h(\sigma)[(A,\epsilon)] & \mbox{if $(A,\epsilon)$ has even no. of negatives }\\
      h_D(\sigma)[(A,\epsilon)] & \mbox{if $(A,\epsilon)$ has odd no. of negatives}  
   \end{cases}
\]
where $(A,\epsilon)$ is juxtaposed at the end of the $h(\sigma)$ or $h_D(\sigma)$.

We start with the following type D counterpart of Lemma 
\ref{thm:combin-typeb-coeff}.
\begin{lemma}
	\label{thm:combin-typed-coeff-new}
	Let $(A,\epsilon)\in \sgnbinom{[n]}{r}$ be a signed subset  of $[n]$. Then,
	\begin{equation}
	\sum_{ (A,\epsilon) \in \sgnbinom{[n]}{r}} q^{\invd(f_{\DD}([[n]-A],[(A,\epsilon)]))}= \qbinom{n}{r} (1+q^{n-1})(1+q^{n-2})\cdots(1+q^{n-r}) .
	\end{equation}
\end{lemma}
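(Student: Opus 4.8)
The plan is to mimic the inductive argument used to prove Lemma \ref{thm:combin-typeb-coeff}, adapting it to the type D length function $\invd$ and the map $f_{\DD}$. I would proceed by induction on $n$, verifying a small base case (say $n=1$, where $\DD_1$ is trivial and the identity is easy). For the inductive step, writing $\eta_D(n,r) = (1+q^{n-1})\cdots(1+q^{n-r})$, I would partition $\sgnbinom{[n+1]}{r+1}$ into the same three classes as in the type B proof: signed subsets $(A,\epsilon)$ containing $n+1$ with a positive sign ($\mathcal{A}_1$), those containing $\overline{n+1}$ ($\mathcal{A}_2$), and those not containing $\pm(n+1)$, i.e.\ $n+1 \in [0,n+1]-A$ ($\mathcal{A}_3$). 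In each class I would count the extra $\invd$-contribution caused by the largest element and relate the sum to the $n$-case via the $q$-Pascal recurrence, exactly as in \eqref{eqn:lemma1B1}--\eqref{eqn:lemma1B3}.

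The key computational difference from type B is the absence of the $|\Negs(\pi)|$ term in $\invd$ (compare \eqref{eqn:typeb-inv-defn} with \eqref{eqn:typed-inv-defn}); this is what shifts the exponents from $(1+q^n)(1+q^{n-1})\cdots(1+q^{n-r+1})$ down to $(1+q^{n-1})(1+q^{n-2})\cdots(1+q^{n-r})$. Concretely: when $n+1 \in (A,\epsilon)$ with a positive sign, it sits at the rightmost position of $f_{\DD}([[n+1]-A],[(A,\epsilon)])$ and contributes no new inversions, so this class contributes $\eta_D(n,r)\qbinom{n}{r}$. When $\overline{n+1} \in (A,\epsilon)$, it occupies position $n-r$ (among the $n+1$ entries indexed appropriately for $\DD$), every element to its left contributing two inversions to the pair counts in \eqref{eqn:typed-inv-defn} and every element to its right contributing one — but now there is no further $+1$ from a NegSum term — giving $q^{2(n-r)-? }$; I would carefully recount to confirm the exponent is $2(n-r)$ rather than the type-B value $2n-r+1$, which is precisely the source of the index shift. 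When $n+1$ stays in $[0,n+1]-A$, it is the largest of those entries, and each of the $r+1$ elements of $(A,\epsilon)$ lying to its right contributes one inversion, giving the factor $q^{r+1}\eta_D(n,r+1)\qbinom{n}{r+1} = q^{r+1}(1+q^{n-r-1})\eta_D(n,r)\qbinom{n}{r+1}$.

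Summing the three contributions and applying the $q$-Pascal recurrence $\qbinom{n}{r} + q^{?}\qbinom{n}{r+1}$-type identity should collapse the bracket to $(1+q^{n})\qbinom{n+1}{r+1}$ times $\eta_D(n,r)$... but here I must be careful: I expect the correct collapse to yield $\eta_D(n+1,r+1)\qbinom{n+1}{r+1}$, so the leftover factor should be $(1+q^{n})$, consistent with $\eta_D(n+1,r+1) = (1+q^n)\eta_D(n,r)\cdot$(appropriate tail). The main obstacle will be bookkeeping the exact inversion exponents for the case $\overline{n+1}\in(A,\epsilon)$ under the type D convention — in particular tracking how the initial entry $\overline{\pi_1}$ adjustment in $h_D$ interacts with a flipped largest element, and making sure the parity-of-negatives condition that selects $h$ versus $h_D$ does not disturb the count (it should not, since flipping the sign of $n+1$ toggles between the $h$ and $h_D$ branches but the resulting strings differ only in the placement of $\pm(n+1)$, whose inversion contribution I am computing directly). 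Once the exponents are pinned down, the algebraic simplification is routine $q$-binomial manipulation, and the induction closes.
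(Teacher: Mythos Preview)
Your plan is essentially identical to the paper's proof: induction on $n$, the same three-way partition $\mathcal{A}_1,\mathcal{A}_2,\mathcal{A}_3$, and collapse via the $q$-Pascal recurrence. Two points to fix in your bookkeeping. First, in the $\mathcal{A}_2$ case the exponent is $2n-r$, not $2(n-r)$: the element $\overline{n+1}$ sits in position $n-r+1$, the $n-r$ elements to its left each contribute $2$ to $\invd$, and the $r$ elements to its right each contribute $1$ (via the $-\pi_i>\pi_j$ clause), for a total of $2(n-r)+r=2n-r$; this is exactly one less than the type B exponent $2n-r+1$, as you anticipated. Second, your explanation of why the $h$ versus $h_D$ branch does not affect the count is not quite right: toggling the parity of negatives in $(A,\epsilon)$ flips the sign of the \emph{first} element of the prefix (not the placement of $\pm(n+1)$), and the reason this is harmless is simply that $\invd$ is invariant under $\pi_1\mapsto -\pi_1$, since the two clauses in \eqref{eqn:typed-inv-defn} swap their $i=1$ contributions. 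With these corrections the sum over the three classes is $\eta_D(n,r)\bigl(\qbinom{n}{r}+q^{2n-r}\qbinom{n}{r}+q^{r+1}(1+q^{n-r-1})\qbinom{n}{r+1}\bigr)$, which collapses to $\eta_D(n+1,r+1)\qbinom{n+1}{r+1}$ exactly as in the paper.
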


\begin{proof}
We proceed by induction on $n$. The base case when $n=1$ is easy.
	We assume that our Lemma is true for $n$ and show that it holds for $n+1$.
Thus, we need to show the following:
\begin{equation}
\label{eqn:;in_lem_inv_D} 
\sum_{ (A,\epsilon) \in \sgnbinom{[n+1]}{r+1}} q^{\inv_D(f_{\mathcal{D}}([[n+1]-A],(A,\epsilon)))}= \qbinom{n+1}{r+1} (1+q^{n})(1+q^{n-1})\cdots(1+q^{n-r}).
\end{equation}
Let $\eta(n,r)=(1+q^{n-1})\cdots(1+q^{n-r})$.
We partition $\sgnbinom{[n+1]}{r+1}$ into the disjoint union of the 
following three subsets.
\begin{enumerate}
\item $\mathcal{A}_1= \lbrace (A,\epsilon) \in \sgnbinom{[n+1]}{r+1}: n+1 \in (A,\epsilon)\rbrace$,
		
\item $\mathcal{A}_2= \lbrace (A,\epsilon) \in \sgnbinom{[n+1]}{r+1}: \overline{n+1} \in (A,\epsilon)\rbrace$,
		
\item $\mathcal{A}_3= \lbrace (A,\epsilon) \in \sgnbinom{[n+1]}{r+1}: n+1 \notin (A,\epsilon)\rbrace$.
\end{enumerate}
	
We next determine the contribution to 
$ \sum_{ (A,\epsilon) \in \sgnbinom{[n+1]}{r+1}} 
q^{\inv_D(f_{\DD}([[n+1]-A],(A,\epsilon)))}$ from each of the above sets. 
If $n+1 \in (A,\epsilon)$, as $[(A,\epsilon)]$ is in ascending order, it will 
be the rightmost element of $f([[n+1]-A],[(A,\epsilon)])$ and thus 
it will contribute no extra inversions. Thus
	\begin{equation}
	\label{eqn:lemma1B1-D} 
	\sum_{  (A,\epsilon) \in \mathcal{A}_1  } q^{\inv_D(f_{\DD}([[n+1]-A],[(A,\epsilon)]))}= \eta(n,r) \qbinom{n}{r}. 
	\end{equation}
	
If $\overline{n+1} \in (A,\epsilon)$, then $\overline{n+1}$ has to be in 
the $(n-r+1)$-th position in $f([[n+1]-A],(A,\epsilon))$. 
Every element of $[[n+1]-A]$ will be to its left and will 
thus contribute $2$ inversions.  Further, every element to its right 
will contribute $1$ inversion. Thus, we get $2n-r$ new inversions. 
Therefore, 
	
	\begin{equation}
	\label{eqn:lemma1B2-D} 
	\sum_{  (A,\epsilon) \in \mathcal{A}_2  } q^{\inv_D(f_{\DD}([[n+1]-A],[(A,\epsilon)]))}
	=\eta(n,r) q^{2n-r} \qbinom{n}{r} .
	\end{equation}
	
	Lastly, when $n+1 \in [n+1]-A$, then it has to be the 
	rightmost element in $[n+1]-A$. Every element of $(A,\epsilon)$ will 
	contribute one inversion and thus we get `$r+1$' extra inversions. 
	Hence,
	
	\begin{equation}
	\label{eqn:lemma1B3-D} 
	\sum_{  (A,\epsilon) \in \mathcal{A}_3  } q^{\inv_D(f_{\DD}([[n+1]-A],(A,\epsilon)))}
	= q^{r+1} \eta(n,r+1)  \qbinom{n}{r+1} 
	= q^{r+1}(1+q^{n-r-1}) \eta(n,r) \qbinom{n}{r+1}. 
	\end{equation}

	Summing up \eqref{eqn:lemma1B1-D}, \eqref{eqn:lemma1B2-D} and 
	\eqref{eqn:lemma1B3-D}, we get 
	\begin{eqnarray*}
		&&\sum_{ (A,\epsilon) \in \sgnbinom{[n+1]}{r+1}} q^{\inv_D(f_{\mathcal{D}}([[n+1]-A],(A,\epsilon)))}\\
		&=& \eta(n,r) \bigg(\qbinom{n}{r} + q^{2n-r} \qbinom{n}{r} +q^{r+1}(1+q^{n-r-1}) \qbinom{n}{r+1}  \bigg)=
		\eta(n+1,r+1)\qbinom{n+1}{r+1}. 
	\end{eqnarray*}
	The last equation follows from the $q$-Pascal recurrence 
	for the Gaussian binomial coefficients. This completes the proof. 
\end{proof}

\begin{corollary}
	\label{thm:typeb-sst-combin-D}
	Let $\sigma \in \DD_{n-r}$ be a signed permutation, $(A,\epsilon)\in \sgnbinom{[n]}{r}$ be a signed subset. 
	\begin{equation}
	\sum_{ (A,\epsilon) \in \sgnbinom{[n]}{r}} q^{\invd(f_{\DD}(\sigma,[(A,\epsilon)]))}= q^{\invd(\sigma)}\qbinom{n}{r} (1+q^{n-1})(1+q^{n-2})\cdots(1+q^{n-r}).
	\end{equation} 
\end{corollary}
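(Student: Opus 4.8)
The plan is to follow the proof of Corollary~\ref{cor:afterlemma1typeB} almost verbatim, with Lemma~\ref{thm:combin-typed-coeff-new} playing the role that Lemma~\ref{thm:combin-typeb-coeff} played there. The key step is to establish, for every $\sigma \in \DD_{n-r}$ and every $(A,\epsilon) \in \sgnbinom{[n]}{r}$, the additivity
\begin{equation*}
\invd\big(f_{\DD}(\sigma,(A,\epsilon))\big) = \invd(\sigma) + \invd\big(f_{\DD}([[n]-A],(A,\epsilon))\big).
\end{equation*}
Once this is known, summing over all $(A,\epsilon)\in\sgnbinom{[n]}{r}$ and extracting the factor $q^{\invd(\sigma)}$ reduces the corollary precisely to Lemma~\ref{thm:combin-typed-coeff-new}.

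To prove the displayed identity I would split the inversions of the word $f_{\DD}(\sigma,(A,\epsilon))$, which is a concatenation of a length-$(n-r)$ block (namely $h(\sigma)$ or $h_D(\sigma)$) and the length-$r$ block $[(A,\epsilon)]$, into three parts: inversions within the first block, inversions within the second block, and inversions spanning the two blocks. The argument rests on the identity $[v>w]+[-v>w]=[|v|>w]+[|v|<-w]$ for integers $v,w$, which says that the contribution of a first-block entry $v$ paired against a second-block entry $w$ depends only on $|v|$. Since $f_{\DD}(\sigma,(A,\epsilon))$ and $f_{\DD}([[n]-A],(A,\epsilon))$ carry the same set of absolute values $\{c_1,\dots,c_{n-r}\}$ in the first block and the identical second block $[(A,\epsilon)]$, their between-block counts coincide, as do their within-second-block counts. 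Applying the same identity at position~$1$ shows that negating the first coordinate of a signed word does not change $\invd$; this is exactly what is needed to treat the two cases in the definition of $f_{\DD}$ uniformly (whether $(A,\epsilon)$ has an even or an odd number of negative parts, i.e.\ whether the first block is $h(\sigma)$ or $h_D(\sigma)$).

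It then remains to evaluate the within-first-block counts. For $f_{\DD}(\sigma,(A,\epsilon))$ this count is $\invd(h(\sigma))$ or $\invd(h_D(\sigma))$; both equal $\invd(\sigma)$, the first because $h$ is induced by the order-preserving bijection $[n-r]\to\{c_1<\cdots<c_{n-r}\}$ and preserves signs, while $\invd$ depends only on relative order and signs, and the second by the ``negate-the-first-coordinate'' observation above. For the reference word $f_{\DD}([[n]-A],(A,\epsilon))$ the first block is $c_1,c_2,\dots,c_{n-r}$ (or $\overline{c_1},c_2,\dots,c_{n-r}$), whose within-block inversion count is $0$. Subtracting yields the displayed identity, and summing over $(A,\epsilon)$ and invoking Lemma~\ref{thm:combin-typed-coeff-new} finishes the proof, just as in Corollary~\ref{cor:afterlemma1typeB}. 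The only place that genuinely needs care is this case split between the $h$- and $h_D$-branches of $f_{\DD}$ together with checking that the reference word chosen in the odd branch still contributes $0$ within its first block; everything else is a routine transcription of the type B argument.
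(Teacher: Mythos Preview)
Your proposal is correct and follows essentially the same approach as the paper. The paper's own proof is extremely terse (``The proof follows exactly as the proof of Corollary~\ref{cor:afterlemma1typeB}. The result follows by noting that changing the sign of the first element does not affect the type D inversion statistic''), and what you have written is precisely a careful unpacking of that sentence: the additivity argument mirrors Corollary~\ref{cor:afterlemma1typeB}, and your identity $[v>w]+[-v>w]=[|v|>w]+[|v|<-w]$ is exactly what justifies both the between-block comparison and the fact that negating the first coordinate leaves $\invd$ invariant, which is the one extra ingredient needed to handle the $h$ versus $h_D$ branches of $f_{\DD}$.
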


\begin{proof}
The proof follows exactly as the proof of Corollary \ref{cor:afterlemma1typeB}. 
The result follows by noting that changing the sign of the 
first element does not affect the type D inversion statistic.
\end{proof}

\begin{corollary}
	\label{thm:cor-typed-coeff}
	We have
	\begin{eqnarray*}
		& &    \sum_{\sigma \in  \DD_{n-r}}\sum_{ (A,\epsilon) \in \sgnbinom{[n]}{r}} t^{\odesd(\sigma)}s^{\edesd(\sigma)} q^{\invd(f_{\DD}(\sigma,[(A,\epsilon)]))} \\
		& & = D_{n-r}(s,t,q) \qbinom{n}{r} (1+q^{n-1})(1+q^{n-2})\cdots(1+q^{n-r}).
	\end{eqnarray*}
\end{corollary}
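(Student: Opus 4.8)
The plan is to obtain this statement as an immediate consequence of Corollary~\ref{thm:typeb-sst-combin-D}, in exactly the same way that Corollary~\ref{cor:afterlemmacor2-B} was deduced from Corollary~\ref{cor:afterlemma1typeB} in the type B setting. The combinatorial work has already been carried out in Lemma~\ref{thm:combin-typed-coeff-new}, whose induction on $n$ via the $q$-Pascal recurrence produces the product $(1+q^{n-1})(1+q^{n-2})\cdots(1+q^{n-r})$, and in Corollary~\ref{thm:typeb-sst-combin-D}, which records that replacing $[[n]-A]$ by the bijective image $h(\sigma)$ (or $h_D(\sigma)$) contributes exactly $\invd(\sigma)$ additional inversions; so only a bookkeeping step remains.

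First I would fix a signed permutation $\sigma \in \DD_{n-r}$ and apply Corollary~\ref{thm:typeb-sst-combin-D}, which gives
$$\sum_{(A,\epsilon) \in \sgnbinom{[n]}{r}} q^{\invd(f_{\DD}(\sigma,[(A,\epsilon)]))} = q^{\invd(\sigma)} \qbinom{n}{r} (1+q^{n-1})(1+q^{n-2})\cdots(1+q^{n-r}).$$
Since $\odesd(\sigma)$ and $\edesd(\sigma)$ depend only on $\sigma$ and not on the signed subset $(A,\epsilon)$, the factor $t^{\odesd(\sigma)} s^{\edesd(\sigma)}$ is constant with respect to the inner sum and may be pulled outside it. Multiplying the displayed identity through by this factor yields
$$\sum_{(A,\epsilon) \in \sgnbinom{[n]}{r}} t^{\odesd(\sigma)} s^{\edesd(\sigma)} q^{\invd(f_{\DD}(\sigma,[(A,\epsilon)]))} = t^{\odesd(\sigma)} s^{\edesd(\sigma)} q^{\invd(\sigma)} \qbinom{n}{r}(1+q^{n-1})\cdots(1+q^{n-r}).$$

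Next I would sum over all $\sigma \in \DD_{n-r}$. The right-hand side becomes
$$\left( \sum_{\sigma \in \DD_{n-r}} s^{\edesd(\sigma)} t^{\odesd(\sigma)} q^{\invd(\sigma)} \right) \qbinom{n}{r}(1+q^{n-1})\cdots(1+q^{n-r}),$$
and recognising the parenthesised sum as $D_{n-r}(s,t,q)$ by definition completes the proof. There is essentially no obstacle here; the only point to keep in mind is that $f_{\DD}$ is a bijection from $\DD_{n-r} \times \sgnbinom{[n]}{r}$ onto $H_{n,n-r}$, and that, as noted in the proof of Corollary~\ref{thm:typeb-sst-combin-D}, flipping the sign of the first coordinate in passing from $h$ to $h_D$ does not affect the type D inversion statistic, so the shift by $\invd(\sigma)$ is uniform across both cases.
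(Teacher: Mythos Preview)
Your proposal is correct and follows essentially the same route as the paper: fix $\sigma\in\DD_{n-r}$, apply Corollary~\ref{thm:typeb-sst-combin-D} to evaluate the inner sum, multiply by $t^{\odesd(\sigma)}s^{\edesd(\sigma)}$, and then sum over $\sigma$ to recognise $D_{n-r}(s,t,q)$. Your write-up is in fact slightly more detailed than the paper's, but the argument is identical.
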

\begin{proof}
\blue{
    For a particular $\sigma\in \DD_{n-r}$, we have 
 \begin{eqnarray*}
 &&t^{\odesd(\sigma)}s^{\edesd(\sigma)}\sum_{ (A,\epsilon) \in \sgnbinom{[n]}{r}}q^{\invd(f_{\DD}(\sigma,[(A,\epsilon)]))}\\&=& t^{\odesd(\sigma)}s^{\edesd(\sigma)}q^{\invd(\sigma)}\qbinom{n}{r} (1+q^{n-1})(1+q^{n-2})\cdots(1+q^{n-r})
 \end{eqnarray*}
 Summing over all possible $\sigma \in \DD_{n-r}$ finishes the proof.
}

\end{proof}

\begin{lemma}
	Let $X_{\{1,\overline{1}\}}$ be the set of signed permutations in $\DD_n$ such that the descent set is a subset of $\lbrace 1,\overline{1} \rbrace$. 
	Then,
	\begin{equation}
	\sum_{w\in X_{\{1,\overline{1}\}}} t^{\odesd(w)} s^{\edesd(w)} q^{\invd(w)}=t^2\frac{D_n(1,q)}{[n-1]_q!}+t(1-t)\bigg(\frac{2D_n(1,q)}{[n]_q!}-1\bigg)+(1-t).
	\end{equation}
\end{lemma}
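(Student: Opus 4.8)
The plan is to recognize the index set and then pin down a quadratic in $t$. Since $\DES_D(w)\subseteq\{1,\overline{1}\}$ means exactly that $w$ has no descent at any of the positions $2,3,\dots,n-1$, i.e. $w_2<w_3<\cdots<w_n$, we have $X_{\{1,\overline{1}\}}=H_{n,1}$. On $H_{n,1}$ the only descents that can occur are at $\overline{1}$ and $1$, both odd, so $\edesd(w)=0$ and $\odesd(w)\in\{0,1,2\}$ for every $w\in H_{n,1}$ (this is why $s$ does not appear on the right). Hence the left-hand side equals $a_0+a_1t+a_2t^2$ with $a_j=\sum_{w\in H_{n,1},\ \odesd(w)=j}q^{\invd(w)}$, and the task is to evaluate $a_0,a_1,a_2$.

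First I would note $a_0=1$: the identity is the unique element of $\DD_n$ with empty descent set (an increasing element of $\DD_n$ with $-w_1<w_2$ has no negative letters, so it is $\id$), and $\invd(\id)=0$. For $a_1$, an element with $\odesd(w)=1$ has a single descent, located either at $1$ or at $\overline{1}$, and so lies in $X_{\{1\}}\setminus\{\id\}$ or $X_{\{\overline{1}\}}\setminus\{\id\}$; these two sets are disjoint (their intersection is $X_\emptyset=\{\id\}$) and their union is $\{w\in H_{n,1}:\odesd(w)=1\}$, giving $a_1=\big(\sum_{w\in X_{\{1\}}}q^{\invd(w)}-1\big)+\big(\sum_{w\in X_{\{\overline{1}\}}}q^{\invd(w)}-1\big)$. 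To finish I would evaluate these two sums. The set $X_{\{\overline{1}\}}$ is the set of increasing signed permutations in $\DD_n$; encoding such a $w$ by the set $S\subseteq[n]$ of absolute values of its negative letters (with $|S|$ even), a direct evaluation of the two quantities in the definition of $\invd$ yields $\invd(w)=\sum_{a\in S}(a-1)$, so
\[ \sum_{w\in X_{\{\overline{1}\}}}q^{\invd(w)}=\sum_{\substack{S\subseteq[n]\\ |S|\text{ even}}}q^{\sum_{a\in S}(a-1)}=\tfrac12\Big(\prod_{j=0}^{n-1}(1+q^j)+\prod_{j=0}^{n-1}(1-q^j)\Big)=\tfrac12\prod_{j=0}^{n-1}(1+q^j)=\frac{D_n(1,q)}{[n]_q!}, \]
the second product dropping out because of its factor $1-q^0$ and the last step being the product formula for $D_n(1,q)$ (the $q$-analogue of $|\DD_n|=2^{n-1}n!$). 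For $X_{\{1\}}=\{w\in\DD_n:-w_1<w_2<\cdots<w_n\}$, the map $w\mapsto(-w_1,w_2,\dots,w_n)$ is an $\invd$-preserving bijection onto the increasing signed permutations of $\BB_n$ that are not in $\DD_n$ (flipping the first sign does not affect type D length, as used in the proof of Corollary~\ref{thm:typeb-sst-combin-D}), so running the same computation over odd $|S|$ again gives $\sum_{w\in X_{\{1\}}}q^{\invd(w)}=\tfrac12\prod_{j=0}^{n-1}(1+q^j)=D_n(1,q)/[n]_q!$. Thus $a_1=2\big(D_n(1,q)/[n]_q!-1\big)$.

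Finally, $a_0+a_1+a_2$ is the value at $t=s=1$, i.e. $\sum_{w\in H_{n,1}}q^{\invd(w)}$, which by Corollary~\ref{thm:cor-typed-coeff} with $r=n-1$ (and $D_1(1,q)=1$, $\qbinom{n}{n-1}=[n]_q$) equals $[n]_q\prod_{j=1}^{n-1}(1+q^j)=D_n(1,q)/[n-1]_q!$; hence $a_2=D_n(1,q)/[n-1]_q!-2D_n(1,q)/[n]_q!+1$. Plugging $a_0,a_1,a_2$ into $a_0+a_1t+a_2t^2$ and regrouping gives precisely $t^2\frac{D_n(1,q)}{[n-1]_q!}+t(1-t)\big(\frac{2D_n(1,q)}{[n]_q!}-1\big)+(1-t)$. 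The step I expect to take the most care is getting the descriptions of $X_{\{1\}}$ and $X_{\{\overline{1}\}}$ right and establishing $\invd(w)=\sum_{a\in S}(a-1)$ on increasing signed permutations, together with the even/odd split of the resulting sum; everything else is bookkeeping plus the already-proved Corollary~\ref{thm:cor-typed-coeff}.
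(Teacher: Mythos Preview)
Your proof is correct and follows essentially the same approach as the paper: both recognize $X_{\{1,\overline{1}\}}=H_{n,1}$, decompose the sum according to $\odesd\in\{0,1,2\}$, compute the contribution from increasing signed permutations via $\invd(w)=\sum_{a\in S}(a-1)$ together with the sign-flip on the first coordinate, and invoke the earlier lemma/corollary for the total $\sum_{H_{n,1}}q^{\invd}$. The only difference is organizational---you compute $a_0,a_1,a_2$ individually while the paper packages the same information as the partial sums $\sum_{Y_0}=a_0+a_1$ and $\sum_{Y_1}=a_0+a_1+a_2$ in its inclusion--exclusion formulation---and your even/odd subset split making $\prod_{j=0}^{n-1}(1-q^j)$ vanish is a slightly more explicit version of the paper's bijective argument for $\sum_{Y_0}$.
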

\begin{proof}
	Let $Y_1=X_{\{1,\overline{1}\}}$ be the set of signed permutations of $\DD_n$ with the last $n-1$ elements in ascending order and $Y_0$ be the set of signed permutations of $\DD_n$ such that the descent set is a subset of $\lbrace 1 \rbrace$ or $\lbrace \overline{1} \rbrace$. Then, by inclusion-exclusion, we can say that 
	\begin{multline}
	\sum_{w\in X_{\{1,\overline{1}\}}} t^{\odesd(w)} s^{\edesd(w)} q^{\invd(w)}=t^2\bigg(\sum_{w\in Y_1} q^{\invd(w)}\bigg) +t(1-t)\bigg(\sum_{w\in Y_0} q^{\invd(w)}\bigg)+(1-t).
	\end{multline}
	The equation
	\begin{equation}
	\sum_{w\in Y_1} q^{\invd(w)}=\frac{D_n(1,q)}{[n-1]_q!}
	\end{equation}
	comes from \eqref{thm:combin-typed-coeff-new}. 
	
	We just need to show that
	\begin{equation}
	\sum_{w\in Y_0} q^{\invd(w)}=\frac{2D_n(1,q)}{[n]_q!}-1
	\end{equation}
	If we want a descent at $\lbrace 1 \rbrace$ but not at $\lbrace \overline{1} \rbrace$ or vice versa, then we need $|\pi(1)|>|\pi(2)|$. 
	This can be done in the following way. We assign signs to the 
elements of $[n]$ and arrange them in ascending order.   Then, 
choose the sign of the first element accordingly to make it an 
element of $\DD_n$ (i.e. to make the total number of negative signs even).
An element $i$ will either contribute $1$ if it is positive or 
$q^{i-1}$ if it is negative, giving the term $(1+q^{i-1})$. Therefore, 
the total contribution would be $(1+q^0)(1+q)\cdots(1+q^{n-1})$. However, 
this procedure also produces $1,2,\dots,n$ and 
$\overline{1},2,\cdots,n$, out of which we only need the former. 
The latter has a length of $1$ which we subtract to complete 
the proof. 
\end{proof}

\begin{lemma}
	\label{lem:passing_Hni_to_Hniminus1}
	With the above notations, when $i$ is odd, we have
	\begin{eqnarray}
	\sum_{\pi'\in H_{n,i}} t^{\odesd(\pi')}s^{\edesd(\pi')}q^{\invd(\pi')} =&&
	t\frac{D_{i}(s,t,q)D_n(1,q)}{D_{i}(1,q)[n-i]_q!}\nonumber\\
	&+& (1-t) \bigg\{ \sum_{\pi' \in H_{n,i-1}} t^{\odesd(\pi')}
	s^{\edesd(\pi')}q^{\invd(\pi')}\bigg\} \label{eqn:passing_Hni_to_Hniminus1iodd} 
	\end{eqnarray} 
	When $i$ is even, we have
	\begin{eqnarray}
	\sum_{\pi'\in H_{n,i}} t^{\odesd(\pi')}s^{\edesd(\pi')}q^{\invd(\pi')} &=&
	s\frac{D_{i}(s,t,q)D_n(1,q)}{D_{i}(1,q)[n-i]_q!}\nonumber\\&+& (1-s) \bigg\{ \sum_{\pi' \in H_{n,i-1}} t^{\odesd(\pi')}
	s^{\edesd(\pi')}q^{\invd(\pi')}\bigg\}. \label{eqn:passing_Hni_to_Hniminus1ieven} 
	\end{eqnarray} 
\end{lemma}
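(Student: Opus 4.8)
The plan is to mirror the argument used in the proof of Lemma \ref{lem:passing_Gni_to_Gniminus1} (the type B case), replacing $\BB$ by $\DD$, the map $f$ by $f_{\DD}$, and Corollary \ref{cor:afterlemmacor2-B} by Corollary \ref{thm:cor-typed-coeff}. First I would set $F_{n,i} = H_{n,i} - H_{n,i-1}$, the set of signed permutations in $\DD_n$ whose last $n-i$ entries are increasing but which \emph{do} have a descent at position $i$. Since $f_{\DD}$ is a bijection from $\DD_i \times \sgnbinom{[n]}{n-i}$ onto $H_{n,i}$, I would start from
\[
\sum_{(\pi,(A,\epsilon)) \in \DD_i \times \sgnbinom{[n]}{n-i}} t^{\odesd(\pi)} s^{\edesd(\pi)} q^{\invd(f_{\DD}(\pi,(A,\epsilon)))}
\]
and split the index set along $f_{\DD}^{-1}(H_{n,i-1})$ and $f_{\DD}^{-1}(F_{n,i})$. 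The key observation, exactly as in the type B proof, is that for $\pi' \in H_{n,i}$ the descent statistics $\odesd$ and $\edesd$ of $f_{\DD}(\pi,(A,\epsilon))$ agree with those of $\pi$ except possibly at position $i$: on $F_{n,i}$ there is an extra descent at position $i$, which contributes a factor $t$ when $i$ is odd (and $s$ when $i$ is even), while on $H_{n,i-1}$ there is no such extra descent. This is where one uses the parity of $i$.

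Carrying this through, when $i$ is odd I would obtain the chain of equalities
\[
D_i(s,t,q)\, \qbinom{n}{n-i}(1+q^{n-1})\cdots(1+q^{i})
= (t-1)\!\!\sum_{\pi' \in H_{n,i-1}}\!\!\! t^{\odesd(\pi')} s^{\edesd(\pi')} q^{\invd(\pi')} + \sum_{\pi' \in H_{n,i}}\!\!\! t^{\odesd(\pi')} s^{\edesd(\pi')} q^{\invd(\pi')},
\]
using Corollary \ref{thm:cor-typed-coeff} with $r = n-i$ on the left. Then I would invoke the type D analogue of \eqref{eqn:easy_relation_typeb}, namely
\[
\qbinom{n}{n-i}(1+q^{n-1})\cdots(1+q^{i}) = \frac{D_n(1,q)}{D_i(1,q)\,[n-i]_q!},
\]
which follows from $|H_{n,i}| = 2^{n-1}\binom{n}{i} i!$ and the fact that $D_n(1,q) = [n]_q! \cdot (1+q)(1+q^2)\cdots(1+q^{n-1})$ (equivalently from Lemma \ref{thm:combin-typed-coeff-new} with $r = n-i$ after specializing appropriately). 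Rearranging gives \eqref{eqn:passing_Hni_to_Hniminus1iodd}. The case $i$ even is identical except that the extra descent at the even position $i$ is weighted by $s$ rather than $t$, so the factor $(t-1)$ becomes $(1-s)$ and $t$ becomes $s$ on the right-hand side, yielding \eqref{eqn:passing_Hni_to_Hniminus1ieven}.

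The main thing to be careful about — and the only genuine departure from the type B argument — is the behaviour of $f_{\DD}$ with respect to signs: $f_{\DD}$ uses $h$ when $(A,\epsilon)$ has an even number of negatives and $h_D$ (which flips the sign of the first entry) when it has an odd number, precisely so that the image lands in $\DD_n$. I would check that this sign-flip on the first coordinate changes neither $\invd$ (by Corollary \ref{thm:typeb-sst-combin-D}, changing the sign of the first element does not affect the type D inversion statistic) nor the descent statistics at positions $\geq 1$, and that the descent/ascent at position $i$ (for $i \geq 1$) depends only on the relative order of the entries placed there, not on the $h$ versus $h_D$ choice. Granting this, the inclusion–exclusion bookkeeping goes through verbatim, and the two displayed recurrences follow. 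I therefore expect no real obstacle beyond this sign-compatibility check; the computation itself is a routine transcription of the type B proof.
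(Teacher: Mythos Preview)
Your proposal is correct and follows essentially the same approach as the paper's own proof: start from the sum over $\DD_i\times\sgnbinom{[n]}{n-i}$, split $H_{n,i}$ into $H_{n,i-1}$ and $H'_{n,i}=H_{n,i}-H_{n,i-1}$, pick up a factor $t$ (or $s$) on the latter piece according to the parity of $i$, and then combine with Corollary~\ref{thm:cor-typed-coeff} and the identity $\qbinom{n}{n-i}(1+q^{n-1})\cdots(1+q^{i})=D_n(1,q)/(D_i(1,q)[n-i]_q!)$.

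One small refinement to your sign-compatibility check: the claim that the sign flip on the first coordinate ``changes \dots\ [not] the descent statistics at positions $\ge 1$'' is not literally true, since flipping the sign of the first entry can toggle the descent condition at position~$1$. What actually happens is that it \emph{swaps} the descent conditions at positions $-1$ and $1$ (because $\pi_{-1}=-\pi_1$), and since both positions are odd this swap preserves $\odesd$; positions $\ge 2$ are genuinely unaffected, so $\edesd$ is preserved as well. With this correction your bookkeeping goes through exactly as in the type~B argument.
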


\begin{proof} 
We at first prove \eqref{eqn:passing_Hni_to_Hniminus1iodd} and therefore 
take $i$ to be odd.  We evaluate 

$ \displaystyle \sum_{(\pi,(A,\epsilon))\in \DD_i\times \sgnbinom{[n]}{n-i}} 
	t^{\odesd(\pi)}s^{\edesd(\pi)}q^{\invd(f_{\DD}(\pi,(A,\epsilon)))}$ 
in a different way as compared to \eqref{thm:cor-typed-coeff}.  
	\begin{eqnarray}
	& & \sum_{(\pi,(A,\epsilon))\in \DD_i\times \sgnbinom{[n]}{n-i}} t^{\odesd(\pi)}s^{\edesd(\pi)}q^{\invd(f_{\DD}(\pi,(A,\epsilon)))} \nonumber  \\
	& = & \sum_{(\pi,(A,\epsilon))\in f_{\DD}^{-1}(H_{n,i})} t^{\odesd(\pi)}s^{\edesd(\pi)}q^{\invd(f_{\DD}(\pi,(A,\epsilon)))} \nonumber \\
	& = & \sum_{(\pi,(A,\epsilon))\in f_{\DD}^{-1}(H_{n,i-1})} t^{\odesd(\pi)}s^{\edesd(\pi)}q^{\invd(f_{\DD}(\pi,(A,\epsilon)))}  \nonumber \\
	& & + \sum_{(\pi,(A,\epsilon))\in f_{\DD}^{-1}(H'_{n,i})} t^{\odesd(\pi)}s^{\edesd(\pi)}q^{\invd(f_{\DD}(\pi,(A,\epsilon)))} \nonumber\\
	& = & \sum_{f_{\DD}(\pi,(A,\epsilon))\in H_{n,i-1}} t^{\odesd(f_{\DD}(\pi,(A,\epsilon)))}s^{\edesd(f_{\DD}(\pi,(A,\epsilon)))}q^{\invd(f_{\DD}(\pi,(A,\epsilon)))}\nonumber\\
	&& \hspace{4 mm} + \frac{1}{t}\bigg\{ \sum_{f_{\DD}(\pi,(A,\epsilon))\in H'_{n,i}} t^{\odesd(f_{\DD}(\pi,(A,\epsilon)))}s^{\edesd(f_{\DD}(\pi,(A,\epsilon)))} q^{\invd(f_{\DD}(\pi,(A,\epsilon)))} \bigg\} \nonumber \\
	& =& \sum_{\pi'\in H_{n,i-1}} t^{\odesd(\pi')}s^{\edesd(\pi')}q^{\invd(\pi')}\nonumber\\&& \hspace{3mm} + \frac{1}{t}\bigg\{ \sum_{\pi'\in H_{n,i}} t^{\odesd(\pi')}s^{\edesd(\pi')}q^{\invd(\pi')} - \sum_{\pi' \in H_{n,i-1}} t^{\odesd(\pi')}s^{\edesd(\pi')}q^{\invd(\pi')} \bigg\} \label{eqn:laststep-typed-rec}.
	\end{eqnarray}
	The second equality follows because $f_{\DD}$ is a bijection between 
	$\DD_i \times \sgnbinom{[n]}{n-i}$ to $H_{n,i}$. For the fourth equality, we have used that $i$ is odd. For the fifth equality, we are again using that $f_{\DD}$ is a bijection and $H'_{n,i}=H_{n,i}-H_{n,i-1}.$ We determine the contribution of each of these three sets. 
	From \eqref{thm:cor-typed-coeff} and \eqref{eqn:laststep-typed-rec}, we have 
	\begin{eqnarray*}
		t\frac{D_{i}(s,t,q)D_n(1,q)}{D_{i}(1,q)[n-i]_q!}&=&(t-1)\Bigg\{ \sum_{\pi' \in H_{n,i-1}} t^{\odesd(\pi')}
		s^{\edesd(\pi')}q^{\invd(\pi')}\Bigg\} \\&&+ \sum_{\pi'\in H_{n,i}} t^{\odesd(\pi')}s^{\edesd(\pi')}q^{\invd(\pi')}
	\end{eqnarray*}
	This completes the proof of \eqref{eqn:passing_Hni_to_Hniminus1iodd}. 
	The proof when $i$ is even is similar and hence is omitted.
\end{proof} 

Our next result is a type D counterpart of the recurrence given
in Theorem 	\ref{thm:typeb_q_rec}.

\begin{theorem}
	\label{thm:typed_q_rec}
\blue{Define $D_2(s,t,q)=(1+tq)^2$.}
When $n \geq 3$, the polynomials $D_n(s,t,q)$ 
satisfy the following recurrence.
	\begin{eqnarray}
	\frac{D_{n}(s,t,q)}{D_{n}(1,q)}& =& \frac{(1-t)^{k+1}(1-s)^{k}}{D_{n}(1,q)}+\frac{2t(1-t)^{k}(1-s)^{k}}{[n]_q!}+\frac{t^2(1-t)^{k-1}(1-s)^{k}}{[n-1]_q!} \nonumber\\
	& & +\sum_{r=0}^{ k-1}  t(1-t)^r(1-s)^{r+1} \frac{D_{n-2r-1}(s,t,q)}{D_{n-2r-1}(1,q)[2r+1]_q!}  \nonumber \\
	& & + \sum_{r=0}^{k-1} s(1-t)^{r}(1-s)^{r}\frac{D_{n-2r}(s,t,q)}{D_{n-2r}(1,q)[2r]_q!} \hspace{1 cm}\mbox{if $n=2k$ is even,}  
	\label{eqn:typed_rec_even} \\
	\frac{D_{n}(s,t,q)}{D_n(1,q)} & =& \frac{(1-t)^{k+2}(1-s)^{k}}{D_n(1,q)} +\frac{2t(1-t)^{k+1}(1-s)^{k}}{[n]_q!}+\frac{t^2(1-t)^{k}(1-s)^{k}}{[n-1]_q!} \nonumber \\ 
	& & +\sum_{r=0}^{k-1} s(1-t)^{r+1}(1-s)^{r} \frac{D_{n-2r-1}(s,t,q)}{D_{n-2r-1}(1,q)[2r+1]_q!} \nonumber \\
	& &+ \sum_{r=0}^{k-1} t(1-t)^{r}(1-s)^r\frac{D_{n-2r}(s,t,q)}{D_{n-2r}(1,q)[2r]_q!} \hspace{1 cm} \mbox{if $n=2k+1$ is odd}. 
	\label{eqn:typed_rec_odd}
	\end{eqnarray}
\end{theorem}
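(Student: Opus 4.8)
The plan is to run, in type D, the same iterative argument that proves Theorem~\ref{thm:typeb_q_rec}, with Lemma~\ref{lem:passing_Hni_to_Hniminus1} playing the role that Lemma~\ref{lem:passing_Gni_to_Gniminus1} played there, and with a different base case. Since $H_{n,n-1}=\DD_n$, we have $D_n(s,t,q)=\sum_{\pi\in H_{n,n-1}}t^{\odesd(\pi)}s^{\edesd(\pi)}q^{\invd(\pi)}$; starting from this I would apply Lemma~\ref{lem:passing_Hni_to_Hniminus1} repeatedly, alternating between its odd-$i$ case (which splits off $t\,\frac{D_i(s,t,q)D_n(1,q)}{D_i(1,q)[n-i]_q!}$ and multiplies the surviving sum over $H_{n,i-1}$ by $1-t$) and its even-$i$ case (which splits off $s\,\frac{D_i(s,t,q)D_n(1,q)}{D_i(1,q)[n-i]_q!}$ and multiplies by $1-s$), descending through $i=n-1,n-2,\dots$ down to $i=1$. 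Because of the extra generator at position $\overline 1$ the passing lemma degenerates at $i=1$, so --- unlike in type B, where the recursion terminates at the trivial permutation $G_{n,-1}$ --- I would stop at $H_{n,1}$ and instead substitute the closed form for $\sum_{w\in H_{n,1}}t^{\odesd(w)}s^{\edesd(w)}q^{\invd(w)}$ furnished by the lemma immediately preceding Lemma~\ref{lem:passing_Hni_to_Hniminus1} (note that $X_{\{1,\overline 1\}}$ is precisely $H_{n,1}$), namely $t^2\frac{D_n(1,q)}{[n-1]_q!}+t(1-t)\big(\frac{2D_n(1,q)}{[n]_q!}-1\big)+(1-t)$; that lemma is itself proved by a short sign-assignment and inclusion-exclusion computation, which I would just cite.

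Unrolling the iteration, the term split off at index $i$ acquires the product of all the alternating factors $1-t$, $1-s$ collected at indices $n-1,\dots,i+1$; reading off the parities of those indices produces the coefficients $(1-t)^r(1-s)^r$ multiplying $\frac{D_{n-2r}(s,t,q)}{D_{n-2r}(1,q)[2r]_q!}$ and $(1-t)^r(1-s)^{r+1}$ multiplying $\frac{D_{n-2r-1}(s,t,q)}{D_{n-2r-1}(1,q)[2r+1]_q!}$ (with $s$ and $t$ interchanged according to the parity of $n$). Multiplying the base value above by the total accumulated tail --- $(1-t)^{k-1}(1-s)^k$ when $n=2k$, and $(1-t)^k(1-s)^k$ when $n=2k+1$ --- and using $(1-t)^a-t(1-t)^a=(1-t)^{a+1}$, one obtains exactly the three non-summation terms $\frac{(1-t)^{k+1}(1-s)^k}{D_n(1,q)}$, $\frac{2t(1-t)^k(1-s)^k}{[n]_q!}$, $\frac{t^2(1-t)^{k-1}(1-s)^k}{[n-1]_q!}$ appearing in \eqref{eqn:typed_rec_even}, and the matching ones in \eqref{eqn:typed_rec_odd}. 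Dividing through by $D_n(1,q)$ yields the stated recurrences. The even and odd cases must be handled separately because the parity of $n-1$ decides whether the first application of Lemma~\ref{lem:passing_Hni_to_Hniminus1} is its odd- or its even-$i$ branch; in the odd-$n$ case the recursion bottoms out at $D_2$, which is why $D_2(s,t,q)=(1+tq)^2$ is recorded as a boundary value and the recurrence is asserted only for $n\ge 3$.

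I expect the main obstacle to be the parity bookkeeping --- checking, for each parity of $n$, exactly which of $1-s$ and $1-t$ is collected at each index so that the exponents in the two sums and in the three base terms all come out as claimed --- together with confirming that Lemma~\ref{lem:passing_Hni_to_Hniminus1} really is applicable for every $2\le i\le n-1$, so that $H_{n,1}$ is the correct place to splice in the base case. No genuinely new idea beyond those already used for type B and in the preparatory type D lemmas should be required.
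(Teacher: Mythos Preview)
Your proposal is correct and follows the paper's own proof essentially step for step: start from $H_{n,n-1}=\DD_n$, iterate Lemma~\ref{lem:passing_Hni_to_Hniminus1} downward (alternating its odd-$i$ and even-$i$ branches) until reaching $H_{n,1}=X_{\{1,\overline 1\}}$, substitute the closed form for $\sum_{w\in X_{\{1,\overline 1\}}}t^{\odesd(w)}s^{\edesd(w)}q^{\invd(w)}$ from the lemma immediately preceding Lemma~\ref{lem:passing_Hni_to_Hniminus1}, and collect terms by parity. The paper carries this out explicitly for even $n$ and then says the odd case is handled the same way, exactly as you anticipate.
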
 

\begin{proof}
As $H_{n,i}$ is the set of signed permutations in $\DD_n$ 
whose rightmost $(n-i)$ entries form an increasing run, we see 
that $H_{n,n-1}$ must be the whole of $\DD_n$. We first consider 
the case when $n$ is even. By repeatedly applying 
\eqref{eqn:passing_Hni_to_Hniminus1iodd} and 
\eqref{eqn:passing_Hni_to_Hniminus1ieven}, we have 
	\begin{eqnarray*}
		D_{n}(s,t,q)&=& \sum_{\pi \in H_{n,n-1}} t^{\odesd(\pi)}s^{\edesd(\pi)}q^{\invd(\pi)} \\
		&=&t\frac{D_{n-1}(s,t,q)D_n(1,q)}{D_{n-1}(1,q)[1]_q!}+(1-t)\Bigg(\sum_{\pi \in H_{n,n-2}}  t^{\odesd(\pi)}s^{\edesd(\pi)}q^{\invd(\pi)}\Bigg) \\
		&= & t\frac{D_{n-1}(s,t,q)D_n(1,q)}{D_{n-1}(1,q)[1]_q!}+s(1-t)\frac{D_{n-2}(s,t,q)D_n(1,q)}{D_{n-2}(1,q)[2]_q!} \\
		& & + (1-s)(1-t)\sum_{\pi \in H_{n,n-3}} t^{\odesd(\pi)}s^{\edesd(\pi)}q^{\invd(\pi)} \\
		& = & t\frac{D_{n-1}(s,t,q)D_n(1,q)}{D_{n-1}(1,q)[1]_q!}+ s(1-t)\frac{D_{n-2}(s,t,q)D_n(1,q)}{D_{n-2}(1,q)[2]_q!}
		+ \cdots \\
		& & + s(1-t)^{\frac{n}{2}-1}(1-s)^{\frac{n}{2}-2}\frac{D_{2}(s,t,q)D_n(1,q)}{D_{2}(1,q)[n-2]_q!}\\
		&& +(1-t)^{\frac{n}{2}-1}(1-s)^{\frac{n}{2}-1}\Bigg(\sum_{\pi \in X_{\lbrace 1,\overline{1}\rbrace}} t^{\odesd(\pi)}s^{\edesd(\pi)}q^{\invd(\pi)}\Bigg).
	\end{eqnarray*}
	
	This completes the proof of \eqref{eqn:typed_rec_even}. We now consider the case when $n$ is odd. Here, we have
	
	\begin{eqnarray*}
		D_n(s,t,q)&=& \sum_{\pi \in H_{n,n-1}} t^{\odesd(\pi)}s^{\edesd(\pi)}q^{\invd(\pi)} \\
		&=&s\frac{D_{n-1}(s,t,q)D_n(1,q)}{D_{n-1}(1,q)[1]_q!} 
		+(1-s)\Bigg(\sum_{\pi \in H_{n,n-2}}  t^{\odesd(\pi)}s^{\edesd(\pi)}q^{\invd(\pi)}\Bigg) \\
		&=&s\frac{D_{n-1}(s,t,q)D_n(1,q)}{D_{n-1}(1,q)[1]_q!}\\
		& & +(1-s)\left (t\frac{D_{n-2}(s,t,q)D_n(1,q)}{D_{n-2}(1,q)[2]_q!}+(1-t)\Big(\sum_{\pi \in H_{n,n-3}} t^{\odesd(\pi)}s^{\edesd(\pi)}q^{\invd(\pi)}\Big) \hspace{-1.4 mm} \right )
	\end{eqnarray*}	
and we can continue as in the case for $n$ being even, 
to complete the proof of \eqref{eqn:typed_rec_odd}. This 
completes the proof. 	
\end{proof}

\subsection{Type D generating functions}
\label{subsec:type_d_gen_fns}
We again cast the recurrences in egf language to get generating functions.
We begin with our proof of Theorem 
\ref{thm:carlitz_scoville_typed_eulerian_q-anal}.

\begin{proof}[Proof of Theorem \ref{thm:carlitz_scoville_typed_eulerian_q-anal}]
Recurrences \eqref{eqn:typed_rec_even} and \eqref{eqn:typed_rec_odd} 
give rise to this following.
	\begin{eqnarray}
	\label{eqn:thm9eqn1} 
	& &  
	\hspace{-5 mm} \mathcal{D}_0\bigg(1-s\cosh_q(Mu)-\frac{s(1-t)\sinh_q(Mu)}{M}\bigg)+\mathcal{D}_1\bigg(1-t\cosh_q(Mu)-\frac{t(1-s)\sinh_q(Mu)}{M}\bigg)\nonumber\\
	& =& \mathrm{OD} + \mathrm{ED}.
	\end{eqnarray}
	\noindent 
	Changing $u$ to $-u$ gives us
	\begin{eqnarray}
	& &   \hspace{-5 mm}
	\mathcal{D}_0\bigg(1-s\cosh_q(Mu)+\frac{s(1-t)\sinh_q(Mu)}{M}\bigg)-\mathcal{D}_1\bigg(1-t\cosh_q(Mu)+\frac{t(1-s)\sinh_q(Mu)}{M}\bigg)\nonumber\\
	& = & -\mathrm{OD} + \mathrm{ED}
	\label{eqn:thm9eqn2}.
	\end{eqnarray}
	
	
Solving the above two equations for $\cD_0$ and $\cD_1$ completes the 
proof.
\end{proof} 

We can now prove Theorem 
\ref{thm:type-d-alt-eul}.

\blue{
\begin{proof} (Of Theorem \ref{thm:type-d-alt-eul})
As done in the proof of Theorem 
\ref{thm:from_carlitz_scoville_typeb_alt_eulerian_q-anal},
one can check when $n=2k$ is even, that
	$\displaystyle \hat{D}_{2k}(s,t,q)  = 
	s^{k} D_{2k}(1/s,t,q)$ and when $n=2k+1$  is odd, that
$\hat{D}_{2k+1}(s,t,q)
	=  s^{k+1}D_{2k+1}\bigg(\frac{1}{s},t,q \bigg).$  
The other details follow as in the proof of Theorem 
\ref{thm:from_carlitz_scoville_typeb_alt_eulerian_q-anal},
completing the proof.
\end{proof}
}

Using Theorem \ref{thm:carlitz_scoville_typed_eulerian_q-anal}
we get a type D counterpart of Theorem  
\ref{thm:carlitz_scoville}. Define 
$$D_n(s_0,s_1,t_0,t_1,q)=\sum_{w\in \DD_n} 
s_0^{\eascd(w)}s_1^{\oascd(w)}t_0^{\edesd(w)}
t_1^{\odesd(w)}q^{\invd(w)}.$$ 
Further, define the generating functions 
\begin{eqnarray}
\label{eqn:defn_q_egf-type-d-five-var-even}
\cD_0(s_0,s_1,t_0,t_1,q,u) & = & \sum_{k\geq 1}D_{2k}(s_0,s_1,t_0,t_1,q)\frac{u^{2k}}{D_{2k}(1,q)},\\
\label{eqn:defn_q_egf-type-d-five-var-odd}
\cD_1(s_0,s_1,t_0,t_1,q,u) & = & \sum_{k\geq 1}D_{2k+1}(s_0,s_1,t_0,t_1,q)\frac{u^{2k+1}}{D_{2k+1}(1,q)}.
\end{eqnarray}

\magenta{
We move to our type D counterpart of Theorem
\ref{thm:carlitz_scoville}.  
Recall $\cD_0(s,t,q,u)$ and $\cD_1(s,t,q,u)$ from 
\eqref{eqn:defn_four_var_egf_type-D}.   
\begin{theorem}
\label{thm:pan_zeng_four_var_type-d-version}
We have the egf
\begin{eqnarray}
\cD_0(s_0,s_1,t_0,t_1,q,u) = 
\frac{ \frac{1}{s_0}[T(\mathrm{ED})(s_1-t_1\cosh_q(mu))]+T(\mathrm{OD})
(\frac{t_1(s_0-t_0)\sqrt{s_0s_1}}{ms_0}\sinh_q(mu))}
{s_0s_1-(s_0t_1+s_1t_0)\cosh_q(mu)+t_0t_1\mathrm{e}_q(mu)\mathrm{e}_q(-mu)},\\ 
\cD_1(s_0,s_1,t_0,t_1,q,u)=\frac{\frac{s_1\sqrt{s_1}T(\mathrm{OD})}{\sqrt{s_0}}(s_0-t_0\cosh_q(mu))+T(\mathrm{ED})(\frac{s_1t_0(s_1-t_1)}{m}\sinh_q(mu))}{s_0s_1-(s_0t_1+s_1t_0)\cosh_q(mu)+t_0t_1\mathrm{e}_q(mu)\mathrm{e}_q(-mu)}. 
\end{eqnarray}
where 
\begin{eqnarray*}
T(\mathrm{OD}) & =& \frac{\sqrt{s_0s_1}ut_1^2}{s_1^2}(\cosh_q(mu)-1)+
\frac{(s_1-t_1)m}{(s_0-t_0)\sqrt{s_0s_1}}(\sinh_{D}(mu;q)-mu) \\
 & & +\frac{2t_1(s_1-t_1)\sqrt{s_0s_1}}{s_1^2m}(\sinh_q(mu)-mu), \\
T(\mathrm{ED}) & = & \frac{2t_1}{s_1}(\cosh_q(mu)-1)+\frac{ut_1^2(s_0-t_0)
\sqrt{s_0s_1}}{s_1^2s_0m}\sinh_q(mu)+\frac{(s_1-t_1)}{t_1}(\cosh_{D}(mu;q)-1).
\end{eqnarray*}
\end{theorem}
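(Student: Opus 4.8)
The plan is to deduce Theorem~\ref{thm:pan_zeng_four_var_type-d-version} from the three-variable result Theorem~\ref{thm:carlitz_scoville_typed_eulerian_q-anal} by exactly the substitution device already used to pass from Theorem~\ref{thm:carlitz_scoville_typed_eulerian_q-anal} to Theorem~\ref{thm:type-d-alt-eul}, and in type B from Theorem~\ref{thm:from_carlitz_scoville_typeb_eulerian_q-anal} to Theorem~\ref{q-egf-type-b-counterpart-pan-zeng}. The structural input is simple: for $w\in\DD_n$ every index in $\{-1,1,2,\dots,n-1\}$ is either a descent position or an ascent position of $w$ (there are no ties), so the two quantities $\eascd(w)+\edesd(w)$ and $\oascd(w)+\odesd(w)$ are constants $a_n$ and $b_n$ depending only on the parity of $n$; one reads off $a_n$ and $b_n$ by counting the even and the odd elements of $\{-1,1,2,\dots,n-1\}$. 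As a sanity check this forces $a_2=0$, consistent with $D_2(s,t,q)=(1+tq)^2$ being free of $s$.

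First I would use this to eliminate the ascent variables: writing $\eascd(w)=a_n-\edesd(w)$ and $\oascd(w)=b_n-\odesd(w)$ gives
\[
D_n(s_0,s_1,t_0,t_1,q)\;=\;s_0^{a_n}s_1^{b_n}\,D_n\!\left(\frac{t_0}{s_0},\,\frac{t_1}{s_1},\,q\right),
\]
where $D_n(s,t,q)=\sum_{w\in\DD_n}s^{\edesd(w)}t^{\odesd(w)}q^{\invd(w)}$ is the polynomial appearing in Theorem~\ref{thm:carlitz_scoville_typed_eulerian_q-anal}. Next I would multiply by $u^n/D_n(1,q)$ and sum over even $n$ and over odd $n$ separately; the accumulated monomials $s_0^{a_n}s_1^{b_n}u^n$ collapse into powers of $\sqrt{s_0s_1}\,u$, so that
\[
\cD_0(s_0,s_1,t_0,t_1,q,u)=c_0\,\cD_0\!\left(\frac{t_0}{s_0},\frac{t_1}{s_1},q,\sqrt{s_0s_1}\,u\right),\qquad
\cD_1(s_0,s_1,t_0,t_1,q,u)=c_1\,\cD_1\!\left(\frac{t_0}{s_0},\frac{t_1}{s_1},q,\sqrt{s_0s_1}\,u\right)
\]
for explicit monomial prefactors $c_0,c_1$ in $s_0,s_1$ (possibly carrying half-integer exponents), where $\cD_0,\cD_1$ on the right are those of Theorem~\ref{thm:carlitz_scoville_typed_eulerian_q-anal}.

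Finally I would substitute the closed forms of Theorem~\ref{thm:carlitz_scoville_typed_eulerian_q-anal} into these two relations and simplify. Under $s\mapsto t_0/s_0$, $t\mapsto t_1/s_1$, $u\mapsto\sqrt{s_0s_1}\,u$ one has $M=\sqrt{(1-s)(1-t)}\mapsto m/\sqrt{s_0s_1}$ and hence $Mu\mapsto mu$, so $\cosh_q(Mu)$, $\sinh_q(Mu)$, $\cosh_D(Mu;q)$ and $\sinh_D(Mu;q)$ all become the corresponding functions of $mu$; multiplying numerator and denominator through by $s_0s_1$ turns the common denominator $1-(s+t)\cosh_q(Mu)+st\,\mathrm{e}_q(Mu)\mathrm{e}_q(-Mu)$ into $s_0s_1-(s_0t_1+s_1t_0)\cosh_q(mu)+t_0t_1\,\mathrm{e}_q(mu)\mathrm{e}_q(-mu)$, which is the denominator in the statement. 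Pushing $\mathrm{OD}$ and $\mathrm{ED}$ through the same substitution and absorbing $c_0,c_1$ together with the leftover factors of $\sqrt{s_0s_1}$ yields the two numerators, with $T(\mathrm{OD})$ and $T(\mathrm{ED})$ being precisely the resulting rescaled images of $\mathrm{OD}$ and $\mathrm{ED}$. The one genuinely fiddly point is this last step: the substitution repeatedly multiplies and divides by half-integer powers of $s_0s_1$, and one must verify that they recombine so that $T(\mathrm{OD})$ and $T(\mathrm{ED})$ come out in the stated closed form; this is the same bookkeeping already carried out for types B and for the type D alternating case, so it needs care but introduces no new idea.
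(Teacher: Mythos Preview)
Your approach is correct and is exactly the route the paper takes: reduce to Theorem~\ref{thm:carlitz_scoville_typed_eulerian_q-anal} via the substitution $s\mapsto t_0/s_0$, $t\mapsto t_1/s_1$, $u\mapsto\sqrt{s_0s_1}\,u$, then read off the images of $\mathrm{OD}$, $\mathrm{ED}$ and the denominator. The only thing the paper makes explicit that you leave abstract is the prefactors: from your counts $a_{2k}=k-1$, $b_{2k}=k+1$, $a_{2k+1}=k$, $b_{2k+1}=k+1$ one gets $c_0=s_1/s_0$ and $c_1=\sqrt{s_1}/\sqrt{s_0}$, and with those in hand the remaining simplification is indeed just the bookkeeping you describe.
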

\begin{proof}
We proceed as we did in the proof of Theorem 
\ref{q-egf-type-b-counterpart-pan-zeng}.
It is easy to see that
\begin{eqnarray*}
\cD_0(s_0,s_1,t_0,t_1,q,u) & = &\frac{s_1}{s_0}\mathcal{D}_0(\frac{t_0}{s_0},\frac{t_1}{s_1},q,\sqrt{s_0s_1}u), \\
\cD_1(s_0, s_1, t_0, t_1, q,u) & = & \frac{\sqrt{s_1}}{\sqrt{s_0}}\mathcal{D}_1(\frac{t_0}{s_0},\frac{t_1}{s_1},q,\sqrt{s_0s_1}u).
\end{eqnarray*}
We denote by $T$ the transformation that sends $s$ to 
$\displaystyle \frac{t_0}{s_0}$, $t$ to $\displaystyle 
\frac{t_1}{s_1}$ and $u$ to $\sqrt{s_0s_1}u$.  It is easy to see that  
$T(OD)$ and $T(ED)$ are as given above, completing the proof.
\end{proof}
}

\subsection{Type D $q$-analogue of Hyatt's recurrences}
\label{subsec:hyatt-type-d}

We give our $q$-analogue of Hyatt-type recurrences in 
this subsection.

\begin{theorem}
	\label{thm:typed_plus}
	For even n,
	\begin{eqnarray*}
		\label{eqn:typed_plus_biv_even}
		\sum_{w \in \DD_n^+} t^{\odesd(w)} s^{\edesd(w)}q^{\invd(w)}=
		\sum_{r=0}^{\frac{n}{2}-1}
		q^{\binom{2r+1}{2}}\qbinom{n}{2r+1}D_{n-2r-1}(s,t,q)(s-1)^r (t-1)^r \\
		+ \sum_{r=1}^{\frac{n}{2}}
		q^{\binom{2r}{2}}\qbinom{n}{2r}D_{n-2r}(s,t,q)(s-1)^{r-1} (t-1)^r.
	\end{eqnarray*}
	For odd n,
	\begin{eqnarray*}
		\label{eqn:typed_plus_biv_odd}
		\sum_{w \in \DD_n^+} t^{\odesd(w)} s^{\edesd(w)}q^{\invd(w)}=
		\sum_{r=0}^{\lfloor \frac{n}{2}\rfloor}
		q^{\binom{2r+1}{2}}\qbinom{n}{2r+1}D_{n-2r-1}(s,t,q)(s-1)^r (t-1)^r\\ +
		\sum_{r=1}^{\lfloor \frac{n}{2} \rfloor}
		q^{\binom{2r}{2}}\qbinom{n}{2r}D_{n-2r}(s,t,q)(s-1)^{r} (t-1)^{r-1}.
	\end{eqnarray*}
\end{theorem}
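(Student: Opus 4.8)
The plan is to adapt the inclusion--exclusion argument behind Theorem~\ref{thm:typeb_plus} to $\DD_n$. Write $\DD_n^+ = \{w \in \DD_n : w_n > 0\}$, and for $0 \le k \le n-1$ let ${}_{\DD}\hat{A}_k$ denote the set of $w \in \DD_n^+$ whose last $k+1$ entries are positive and strictly decreasing; put
$$
{}_{\DD}A_k \;=\; \sum_{w \in {}_{\DD}\hat{A}_k} t^{\odesd(w)}\, s^{\edesd(w)}\, q^{\invd(w)},
$$
so that ${}_{\DD}\hat{A}_0 = \DD_n^+$ and the goal is to write ${}_{\DD}A_0$ in the stated form. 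First I would introduce, for each $0 \le i \le n-1$, the juxtaposition map
$$
f'' : \DD_i \times \binom{[n]}{n-i} \longrightarrow {}_{\DD}\hat{A}_{n-i-1},
\qquad
f''(\psi, A) \;=\; \psi_{[n]-A},\, a_1,\, a_2,\, \ldots,\, a_{n-i},
$$
where $a_1 > a_2 > \cdots > a_{n-i} > 0$ lists $A$ in decreasing order and $\psi_{[n]-A}$ is the order- and sign-preserving relabelling $h$ of $\psi \in \DD_i$ onto the value set $[n]-A$. Since the appended block is entirely positive, every negative entry of $f''(\psi,A)$ comes from $\psi$, so their number is even; this is exactly why the image lies in $\DD_n$ (and its last entry is positive, hence in $\DD_n^+$), and one verifies directly that $f''$ is a bijection. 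In particular the second relabelling $h_D$ is never invoked here, which is what keeps the type D argument as clean as the type B one.

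The second step is the weight bookkeeping. Appending a strictly decreasing block of $m = n-i$ positive values in the last $m$ positions produces $\binom{m}{2}$ internal type D inversions, and summing $q$ raised to the number of inversions between the $\psi$-block and the appended block over all $A \in \binom{[n]}{m}$ yields the factor $\qbinom{n}{m}$ (this is the unsigned specialisation of Lemma~\ref{thm:combin-typed-coeff-new}, and one checks as in Corollary~\ref{thm:cor-typed-coeff} that these cross-inversions are insensitive to the signs carried by $\psi$, that $h$ contributes $q^{\invd(\psi)}$, and that $h$ preserves $\odesd$ and $\edesd$ of the $\psi$-part). The descents internal to the appended block occupy a fixed set of positions whose parities depend only on those of $n$ and $m$, contributing a fixed power of $s$ times a fixed power of $t$; the one position whose descent status is not forced is the boundary position $n-m$, and it is a descent on all of ${}_{\DD}\hat{A}_{n-m}$ and on none of ${}_{\DD}\hat{A}_{n-m-1} \setminus {}_{\DD}\hat{A}_{n-m}$. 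Collecting this, exactly as in the type B case, gives the identities
\begin{align*}
q^{\binom{2r}{2}} \qbinom{n}{2r}\, D_{n-2r}(s,t,q)\, s^{r} t^{r}
&= s\, {}_{\DD}A_{2r-1} - (s-1)\, {}_{\DD}A_{2r}, \\
q^{\binom{2r+1}{2}} \qbinom{n}{2r+1}\, D_{n-2r-1}(s,t,q)\, s^{r} t^{r+1}
&= t\, {}_{\DD}A_{2r} - (t-1)\, {}_{\DD}A_{2r+1}
\end{align*}
when $n$ is even; when $n$ is odd the analogous pair holds with $s$ and $t$ interchanged on the right-hand sides and with the monomial $s^{r} t^{r+1}$ replaced by $s^{r+1} t^{r}$.

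The last step is purely formal. Dividing the first identity by $s^{r} t^{r}$ and multiplying by $(s-1)^{r-1}(t-1)^{r}$, and the second by $s^{r} t^{r}$ and multiplying by $(s-1)^{r}(t-1)^{r}$, puts both right-hand sides into the telescoping shape $\bigl(\tfrac{s-1}{s}\bigr)^{a}\bigl(\tfrac{t-1}{t}\bigr)^{b}\, {}_{\DD}A_j$; summing over $1 \le r \le n/2$ and $0 \le r \le n/2-1$ for even $n$ (and over $0 \le r \le \lfloor n/2 \rfloor$, $1 \le r \le \lfloor n/2 \rfloor$ for odd $n$) cancels every intermediate ${}_{\DD}A_j$ and leaves ${}_{\DD}A_0 = \sum_{w \in \DD_n^+} t^{\odesd(w)} s^{\edesd(w)} q^{\invd(w)}$ on the left and the claimed sum on the right, after using $D_0(s,t,q) = D_1(s,t,q) = 1$, $\qbinom{n}{n} = 1$, and the convention ${}_{\DD}A_n = 0$ (as ${}_{\DD}\hat{A}_n$ is vacuous) to resolve the two ends of the telescope.

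I expect the main obstacle to be the extreme case $i = 1$, i.e.\ the last term of the telescope where $\psi \in \DD_1$: there the boundary position $n-m$ is the left end, at which the type D descent set also involves the index $\overline{1}$ and the value $-w_1$, so the block-decomposition of $\odesd$ and $\edesd$ must be checked by hand. The point is that the parity constraint forces $w_1 > 0$ (because $\psi \in \DD_1$ contributes no negatives), hence $w_1 + w_2 > 0$ and there is no $\overline{1}$-descent; this is the only place where the type D notion of descent genuinely departs from type B, and once it is dealt with the remaining verifications are a line-by-line transcription of the proof of Theorem~\ref{thm:typeb_plus}.
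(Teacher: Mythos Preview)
Your proposal is correct and follows essentially the same approach as the paper: both define the filtration ${}_{\DD}\hat{A}_k$, introduce the juxtaposition bijection $f'':\DD_i\times\binom{[n]}{n-i}\to{}_{\DD}\hat{A}_{n-i-1}$, establish the four two-term recurrences in ${}_{\DD}A_k$, and telescope; and the edge case you flag at $i=1$ is precisely the paper's ``ambiguity at $r=\lfloor n/2\rfloor-1$'', resolved in the same way via the parity constraint forcing $w_1>0$ and hence $w_1+w_2>0$. Your write-up is in fact more detailed than the paper's (which defers the telescoping and the weight bookkeeping to the type~B proof), but the argument is the same.
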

\begin{proof}
	Define $_\DD\hat{A}_k$ to be the signed permutations in $\DD_n^+$
	that have their rightmost $k+1$ elements being positive and 
arranged in descending order. Thus, the first
	$n-k-1$ elements must have an even number of negative signs. With this
	observation note that the map $f'':\DD_{k} \times \binom{[n]}{n-k}
	\rightarrow$ $_{\DD}\hat{A}_{n-k-1}$ that carries $(\psi,A)$ to the signed
	permutation $\psi_{[n]-A}a_1a_2\cdots a_{n-k}$ ($a_1>a_2>\cdots
	a_{n-k}>0$),  that is,
$$f''(\psi,A)=\psi_{[n]-A},a_1,a_2,\cdots,a_{n-k}.$$
	is a bijection from $\DD_{k}\times \binom{[n]}{n-k}$ onto
	$_\DD\hat{A}_{n-k-1}$.
	
Write ${_\DD A}_k(s,t,q)=\sum_{w\in _\DD \hat{A}_k} 
t^{\odesd(w)} s^{\edesd(w)} q^{\invd(w)}$.  We will abbreviate the 
LHS as ${_\DD A}_k$ for brevity.
	The following recurrences are then easy to prove.
	For even $n$, we have
	\begin{eqnarray}
	\label{eqn:typed_even_even}
	q^{\binom{2r}{2}}\qbinom{n}{2r} D_{n-2r}(s,t,q)s^rt^r & = & s{_\DD
		A}_{2r-1}-(s-1){_\DD A}_{2r}. \\
	\label{eqn:typed_even_odd}
	q^{\binom{2r+1}{2}}\qbinom{n}{2r+1} D_{n-2r-1}(s,t)s^rt^{r+1} & = &
	t {_\DD A}_{2r}-(t-1){_\DD A}_{2r+1}.
	\end{eqnarray}
	For odd $n$, we have
	\begin{eqnarray}
	\label{eqn:typed_odd_even}
	q^{\binom{2r}{2}}\qbinom{n}{2r} D_{n-2r}(s,t)s^{r}t^r & = & t{_\DD
		A}_{2r-1}-(t-1){_\DD A}_{2r}. \\
	\label{eqn:typed_odd_odd}
	q^{\binom{2r+1}{2}}\qbinom{n}{2r+1}
	D_{n-2r-1}(s,t)s^{r+1}t^{r} & = & s{_\DD A}_{2r}-(s-1){_\DD A}_{2r+1}.
	\end{eqnarray}
	The proofs of these recurrences are along the same lines as the proofs of \eqref{eqn:typeb_even_even},\eqref{eqn:typeb_even_odd},\eqref{eqn:typeb_odd_even} and \eqref{eqn:typeb_odd_odd}. The only ambiguity might be when $r=\nhalf-1$, but this is easily
	resolved as in $_\DD \hat{A}_{2r}$ the rightmost $n-1$ elements 
	are positive and descending for even $n$ or $_\DD \hat{A}_{2r+1}$ when
	$n$ is odd, the first element has to be positive due to the constraint
	that there are an even number of negative signs. Therefore, there is no possibility of $w_1+w_2$ being lesser than $0$.
\end{proof}

To preserve elements being in $\DD_n$, we consider the map that
flips the sign of all elements when $n$ is even and the map
that flips the sign of all elements except the first when 
$n$ is odd.

\begin{lemma}
	\label{lem:sgnflipmaptyped}
	Let $f_D: \DD_n \rightarrow \DD_n$ be the involution that sends
	$w=w_1,\cdots,w_n$ to $\overline{w}=\overline{w_1},\dots,
	\overline{w_n}$ if $n$ is even and $w=w_1,\cdots,w_n$ to
	$\overline{w}=w_1,\overline{w_2}\dots,
	\overline{w_n}$ if $n$ is odd. Then, we have the following:
	\begin{enumerate}
		\item When $n=2k+1$, we have $\odesd(w)+\odesd(f_D(w))=k+1$ and
		when $n=2k$, we have $\odesd(w)+\odesd(f_D(w))=k+1$.
		\item When $n=2k+1$, we have $\edesd(w)+\edesd(f_D(w))=k$ and when
		$n=2k$, we have $\edesd(w)+\edesd(f_D(w))=k-1$.
		\item $\invd(w)+\invd(f_D(w))=n(n-1)$.
	\end{enumerate}
\end{lemma}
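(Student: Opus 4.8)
The plan is to prove all three identities by a position-by-position analysis of how $f_D$ acts on type D descents, and, for the length statement, by recognising $f_D$ as multiplication by the longest element of $\DD_n$. For parts (1) and (2), recall that $\DES_D(w)$ records positions $i\in\{-1,1,2,\ldots,n-1\}$, where for $i\ge 2$ a descent is governed by the pair $(w_i,w_{i+1})$ and for $i\in\{-1,1\}$ by the pair $(w_1,w_2)$ (using $w_{\overline 1}=-w_1$). For every position $i$ with $2\le i\le n-1$ the map $f_D$ negates both $w_i$ and $w_{i+1}$ --- this holds in both parities since these coordinates avoid position $1$ --- so $w$ has a descent at $i$ if and only if $f_D(w)$ does not; hence exactly one of $w,f_D(w)$ has a descent there. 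Because $|w_i|\neq|w_j|$ in $\DD_n$, no ties ever occur, so this ``exactly one'' phenomenon is clean.

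The only positions needing individual attention are $-1$ and $1$. When $n=2k$ is even, $f_D(w)=\overline w$, and a direct check shows that at each of positions $-1$ and $1$ exactly one of $w,f_D(w)$ has a descent. When $n=2k+1$ is odd, $f_D$ fixes $w_1$ and negates $w_2$, and I would check that among the four statements ``$w$ has a descent at $-1$'', ``$w$ has a descent at $1$'', ``$f_D(w)$ has a descent at $-1$'', ``$f_D(w)$ has a descent at $1$'' exactly two hold: they split into the complementary pair $\{w_1>w_2,\ w_1<w_2\}$ and the complementary pair $\{w_2<-w_1,\ w_2>-w_1\}$. Since $-1$ and $1$ are both odd positions, in either parity these two positions jointly contribute $2$ to the odd-descent sum and $0$ to the even-descent sum. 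It then remains to count positions by parity: for $n=2k$ the odd positions in $\{-1,1,\ldots,2k-1\}$ are $-1,1,3,\ldots,2k-1$ ($k+1$ of them) and the even ones are $2,4,\ldots,2k-2$ ($k-1$ of them); for $n=2k+1$ the odd positions are $-1,1,3,\ldots,2k-1$ ($k+1$) and the even ones $2,4,\ldots,2k$ ($k$). Combining the counts with the ``exactly one per position'' fact (and the joint contribution $2$ from $\{-1,1\}$ in the odd case) yields $\odesd(w)+\odesd(f_D(w))=k+1$ in both parities, $\edesd(w)+\edesd(f_D(w))=k-1$ when $n=2k$, and $\edesd(w)+\edesd(f_D(w))=k$ when $n=2k+1$.

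For part (3), the cleanest argument is to note that $f_D$ is right multiplication by the longest element $w_0\in\DD_n$: for $n$ even, $w_0=-\mathrm{id}=[\overline 1,\ldots,\overline n]$, and for $n$ odd, $w_0=[1,\overline 2,\ldots,\overline n]$ (both lie in $\DD_n$, the latter having $n-1$, an even number of, negatives). One checks from the group law in $\DD_n$ that $f_D(w)=w\,w_0$, and that $\invd(w_0)=n(n-1)$ (the number of positive roots in type $D_n$); the standard identity $\ell(w w_0)=\ell(w_0)-\ell(w)$ then gives $\invd(w)+\invd(f_D(w))=n(n-1)$. A self-contained alternative is to write $\invd(\pi)=\inv(\pi)+N(\pi)$ with $N(\pi)=|\{1\le i<j\le n:\pi_i+\pi_j<0\}|$: for $n$ even, $\inv(w)+\inv(\overline w)=\binom{n}{2}$ and $N(w)+N(\overline w)=\binom{n}{2}$ (no ties, since $\pi_i+\pi_j\neq 0$), summing to $n(n-1)$; for $n$ odd, one separates the pairs through position $1$ from the pairs among positions $\ge 2$ and checks that each resulting class of inequalities is exhausted between $w$ and $f_D(w)$, giving $(n-1)(n-2)+2(n-1)=n(n-1)$.

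The hard part will be the bookkeeping in the odd case: verifying that the four descent conditions at positions $-1,1$ pair up complementarily as claimed, and, in the direct computation for part (3), carefully splitting off the pairs that involve position $1$, where $f_D$ behaves differently from the rest. One should also check the smallest cases ($n=2,3$) so that counts such as ``$k-1$ even positions'' are never used out of range.
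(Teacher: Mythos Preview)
Your argument is correct. For parts (1) and (2) the paper simply declares them ``straightforward'' and omits the details; your position-by-position analysis is exactly the natural way to fill this in, and the care you take with positions $-1$ and $1$ in the odd case (pairing the four descent conditions into two complementary pairs) is warranted.

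For part (3) your route is genuinely different from the paper's. The paper writes $\invd(w)=\invb(w)-|\Negs(w)|$ and invokes the already-established type~B identity $\invb(w)+\invb(\overline{w})=n^2$ (Lemma~\ref{lem:sgnflipmap}) together with $|\Negs(w)|+|\Negs(\overline{w})|=n$ to get $n^2-n$; the odd case is then reduced to this by the one-line observation that $\invd$ is unchanged under flipping the sign of the first coordinate, so $\invd(f_D(w))=\invd(\overline{w})$. Your primary argument, identifying $f_D$ as right multiplication by the longest element $w_0$ and using $\ell(ww_0)=\ell(w_0)-\ell(w)$, is more conceptual and explains \emph{why} the constant is $n(n-1)$ (the number of positive roots), at the cost of importing that Coxeter background. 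Your self-contained alternative with $\invd=\inv+N$ is close in spirit to the paper's but argues from scratch rather than piggybacking on the type~B lemma; the paper's trick of reducing odd $n$ to the full-flip case via the $\pi_1\mapsto-\pi_1$ invariance of $\invd$ is a slightly slicker way to avoid the case split you carry out by hand.
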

\begin{proof} The proof of the first two assertions are straightforward
	and hence omitted. For the third part,
%
	recall that  $\invd(w)=\inv_B(w)-|\Negs(w)|$.
	Thus, we have, for even $n$,
	\begin{eqnarray*}
		\invd(w)+\invd(\overline{w})    
		&=&\inv_B(w)+\inv_B(\overline{w})-|\Negs(w)|-|\Negs(\overline{w})|\\
		& = &  n^2-n=n(n-1).
	\end{eqnarray*}
	When $n$ is odd, it is easy to see that
	$\invd({w_1,\overline{w_2},\ldots,\overline{w_n}}) =
	\invd({\overline{w_1},\ldots,\overline{w_n}})$.
	The rest follows from the previous argument.
	The proof is complete.
\end{proof}

\subsection{Symmetry results}
\label{subsec:symm_type-d}

In this Subsection, we give our type D counterparts of 
our symmetry results.


\begin{theorem}
	\label{thm:typed_minus}
	We have  
	$$
	D_n^-(s,t,q)  =  
	\left\{
	\begin{array}{ll}
	q^{n(n-1)}s^{k}t^{k+1} D_n^+(s^{-1},t^{-1},q^{-1}) & \mbox{when $n = 2k+1$,} \\
	q^{n(n-1)}s^{k-1}t^{k+1} D_n^+(s^{-1},t^{-1},q^{-1}) & \mbox{when $n=2k$}.
	\end{array} \right.
	$$
	
	Therefore, we have
	$$
	D_n(s,t,q) =
	\left\{
	\begin{array}{ll}
	D_n^+(s,t,q)+q^{n(n-1)}s^{k}t^{k+1} D_n^+(s^{-1},t^{-1},q^{-1})
	& \mbox{when $n=2k+1$}, \\
	D_n^+(s,t,q)+q^{n(n-1)}s^{k-1}t^{k+1} D_n^+(s^{-1},t^{-1},q^{-1})
	& \mbox{when $n=2k$.}\\
	\end{array}
	\right.
	$$
\end{theorem}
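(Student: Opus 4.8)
The plan is to mimic the type B symmetry argument (Theorem~\ref{thm:typeb_minus}) verbatim, replacing the sign-flip involution on $\BB_n$ by the parity-respecting involution $f_D$ on $\DD_n$ introduced in Lemma~\ref{lem:sgnflipmaptyped}. Recall $\DD_n^+ = \{\pi \in \DD_n : \pi_n > 0\}$ and $\DD_n^- = \DD_n - \DD_n^+$. First I would observe that $f_D$ restricts to a bijection $\DD_n^+ \to \DD_n^-$: in both parities $f_D$ flips the sign of $w_n$ (when $n$ is odd it flips all entries except the first, so in particular $w_n$), hence it interchanges the set with positive last entry and the set with negative last entry, and being an involution it is a bijection between them.

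Next I would simply substitute. For $n = 2k$, Lemma~\ref{lem:sgnflipmaptyped} gives $\odesd(w) + \odesd(f_D(w)) = k+1$, $\edesd(w) + \edesd(f_D(w)) = k-1$, and $\invd(w) + \invd(f_D(w)) = n(n-1)$. Therefore
\begin{eqnarray*}
D_n^-(s,t,q) & = & \sum_{w \in \DD_n^-} s^{\edesd(w)} t^{\odesd(w)} q^{\invd(w)}
= \sum_{w \in \DD_n^+} s^{\edesd(f_D(w))} t^{\odesd(f_D(w))} q^{\invd(f_D(w))} \\
& = & \sum_{w \in \DD_n^+} s^{k-1-\edesd(w)} t^{k+1-\odesd(w)} q^{n(n-1)-\invd(w)}
= q^{n(n-1)} s^{k-1} t^{k+1} D_n^+(s^{-1}, t^{-1}, q^{-1}).
\end{eqnarray*}
For $n = 2k+1$, the same lemma gives $\odesd(w) + \odesd(f_D(w)) = k+1$, $\edesd(w) + \edesd(f_D(w)) = k$, and $\invd(w) + \invd(f_D(w)) = n(n-1)$, so the identical computation yields $D_n^-(s,t,q) = q^{n(n-1)} s^{k} t^{k+1} D_n^+(s^{-1}, t^{-1}, q^{-1})$.

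Finally, since $\DD_n = \DD_n^+ \sqcup \DD_n^-$ we have $D_n(s,t,q) = D_n^+(s,t,q) + D_n^-(s,t,q)$, and substituting the two displayed formulas for $D_n^-$ gives the stated ``Therefore'' identities in each parity. This completes the proof.

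I expect no serious obstacle here: everything is a direct transcription of the type B proof, and all the substantive content (the three statistic-sum identities for $f_D$) has already been established in Lemma~\ref{lem:sgnflipmaptyped}. The only point requiring a moment's care is confirming that $f_D$ genuinely maps $\DD_n^+$ onto $\DD_n^-$ in the odd case, where $f_D$ is not the full sign flip but fixes the first coordinate — but since $n \geq 2$ the last coordinate $w_n$ is among the flipped ones, so the claim holds; and one should also check $f_D$ stays inside $\DD_n$, which holds by construction since flipping an even number of signs (all $n$ of them when $n$ even, all but one when $n$ odd) preserves the parity of the number of negative entries.
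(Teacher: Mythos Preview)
Your proof is correct and follows essentially the same approach as the paper: both apply the involution $f_D$ of Lemma~\ref{lem:sgnflipmaptyped} as a bijection $\DD_n^+ \to \DD_n^-$ and then substitute the three statistic-sum identities directly. Your write-up is in fact slightly more careful than the paper's, since you explicitly verify that $f_D$ carries $\DD_n^+$ onto $\DD_n^-$ (the paper simply asserts it).
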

\begin{proof}
	Let $f_D:\DD_n^+ \rightarrow \DD_n^-$ be the map described earlier.
	By Lemma \ref{lem:sgnflipmaptyped}, when $n=2k$, we have
	\begin{eqnarray*}
		\sum_{w\in \DD_n^-} t^{\odesd(w)}s^{\edesd(w)}q^{\invd(w)}&=&\sum_{w\in \DD_n^+} t^{\odesd(f_D(w))}s^{\edesd(f_D(w))}q^{\invd(f_D(w))}\\
		&=&\sum_{w\in \DD_n^+}t^{k+1-\odesd(w)}s^{k-1-\edesd(w)}q^{n(n-1)-\invd(w)}\\
		&=&q^{n(n-1)}s^{k-1}t^{k+1}\sum_{w\in \DD_n^+} t^{-\odesd(w)}s^{-\edesd(w)}q^{-\invd(w)}.
	\end{eqnarray*}
	When $n=2k+1$, we have
	\begin{eqnarray*}
		\sum_{w\in \DD_n^-} t^{\odesd(w)}s^{\edesd(w)}q^{\invd(w)}&=&\sum_{w\in \DD_n^+} t^{\odesd(f_D(w))}s^{\edesd(f_D(w))}q^{\invd(f_D(w))}\\
		&=&\sum_{w\in \DD_n^+} t^{k+1-\odesd(w)}s^{k-\edesd(w)}q^{n(n-1)-\invd(w)}\\
		&=&q^{n(n-1)}s^{k}t^{k+1}\sum_{w\in \DD_n^+} t^{-\odesd(w)}s^{-\edesd(w)}q^{-\invd(w)}.
	\end{eqnarray*}
	completing the proof.
\end{proof}

Since the following corollary is straightforward, we only
state it and omit its proof.

\begin{corollary}[Type-D Symmetry]
	\label{thm: type_d_reciprocal}
	We have
	$$
	D_{n}(s,t,q)=
	\left\{
	\begin{array}{ll}
	q^{n(n-1)}s^{k}t^{k+1} D_{n}(s^{-1},t^{-1},q^{-1})
	& \mbox{when $n=2k+1$}, \\
	q^{n(n-1)}s^{k-1}t^{k+1} D_{n}
	(s^{-1},t^{-1},q^{-1})
	& \mbox{when $n=2k$.}\\
	\end{array}
	\right.
	$$
\end{corollary}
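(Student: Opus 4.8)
The plan is to mimic the proof of Theorem~\ref{thm:typed_minus}, but to apply the sign-flipping involution $f_D$ of Lemma~\ref{lem:sgnflipmaptyped} to all of $\DD_n$ at once, rather than using it only to relate $\DD_n^-$ to $\DD_n^+$. Since $f_D\colon \DD_n \to \DD_n$ is a bijection (indeed an involution), re-indexing the sum defining $D_n(s,t,q)$ by $w \mapsto f_D(w)$ gives
\begin{equation*}
D_n(s,t,q) = \sum_{w \in \DD_n} t^{\odesd(w)} s^{\edesd(w)} q^{\invd(w)}
= \sum_{w \in \DD_n} t^{\odesd(f_D(w))} s^{\edesd(f_D(w))} q^{\invd(f_D(w))}.
\end{equation*}

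Next I would substitute the three identities recorded in Lemma~\ref{lem:sgnflipmaptyped}. When $n = 2k$ these say $\odesd(f_D(w)) = (k+1) - \odesd(w)$, $\edesd(f_D(w)) = (k-1) - \edesd(w)$ and $\invd(f_D(w)) = n(n-1) - \invd(w)$; when $n = 2k+1$ the first two become $(k+1) - \odesd(w)$ and $k - \edesd(w)$, while the inversion identity is the same. Substituting and pulling the resulting constant powers of $s$, $t$, $q$ out of the sum leaves precisely $\sum_{w \in \DD_n} t^{-\odesd(w)} s^{-\edesd(w)} q^{-\invd(w)}$, which is $D_n(s^{-1}, t^{-1}, q^{-1})$ by definition; reading off the prefactor gives $q^{n(n-1)} s^{k} t^{k+1}$ in the odd case and $q^{n(n-1)} s^{k-1} t^{k+1}$ in the even case, as claimed. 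An alternative, equally short route uses $D_n = D_n^+ + D_n^-$ together with Theorem~\ref{thm:typed_minus}: that theorem gives $D_n^-(s,t,q) = q^{n(n-1)} s^k t^{k+1} D_n^+(s^{-1},t^{-1},q^{-1})$ for $n = 2k+1$, and substituting $(s,t,q) \mapsto (s^{-1},t^{-1},q^{-1})$ in it and rearranging gives $D_n^+(s,t,q) = q^{n(n-1)} s^k t^{k+1} D_n^-(s^{-1},t^{-1},q^{-1})$; adding the two identities and factoring yields the corollary, and the even case is identical with $s^{k-1}$ replacing $s^k$.

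The hard part is essentially nonexistent: all the combinatorial content lives in Lemma~\ref{lem:sgnflipmaptyped} (in particular the check that $f_D$ genuinely maps $\DD_n$ into itself, which relies on flipping $n$ signs when $n$ is even, or $n-1$ signs when $n$ is odd, preserving the parity of the number of negative entries). The only point demanding care is the parity asymmetry --- especially the ``$k-1$'' exponent for $\edesd$ when $n$ is even, which is exactly what forces $s^{k-1}$ rather than $s^k$ in the even case. Given this, the corollary follows by a short and routine calculation, which is why the statement is left without proof in the text.
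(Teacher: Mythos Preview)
Your proposal is correct and matches the paper's intent: the paper explicitly omits the proof as ``straightforward,'' and both routes you give---applying the involution $f_D$ of Lemma~\ref{lem:sgnflipmaptyped} directly to all of $\DD_n$, or combining $D_n=D_n^++D_n^-$ with Theorem~\ref{thm:typed_minus}---are exactly the kinds of argument the surrounding text (and the parallel type~B Lemma~\ref{thm: type_b_reciprocal}) makes clear are intended.
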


\section{Snakes}
\label{sec:snakes}
A $\mathrm{snake}$ in $\BB_n$ is a signed permutation $w \in \BB_n$ satisfying
$0<w_1>w_2<\cdots$. Let $\Snake_n^B$ be the set of $\mathrm{snakes}$ in
$\BB_n$ and denote $|\Snake_n^B|$ by $S^B_n$.  
The paper by Arnol'd \cite{arnold-calculus-snakes} is a good reference for
this topic.  Let 
$S^B_n(q)=\sum\limits_{w\in \Snake_n^B} q^{\invb(w)}$.
Springer in \cite{springer-remarks-combin}
showed the following.
\begin{theorem}[Springer]
\label{thm:snake-gf}
The following is the egf for the numbers $S^B_n$:
    $$\sum\limits_{n\ge 0}S^B_n\frac{u^n}{n!}=\frac{1}{\cos(u)-\sin(u)}.$$
\end{theorem}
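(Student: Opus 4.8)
The plan is to recognize $S^B_n$ as the leading coefficient of the type B alternating descent polynomial $\hat{B}_n(t)=\hat{B}_n(t,t)$ and then to read this leading coefficient off the generating function of Theorem~\ref{thm:pan_zeng_biv_alt_egf}. First I would observe that for $\pi\in\BB_n$ each of the $n$ indices $i\in[n-1]_0$ is either an ascent or a descent of the word $\pi_0,\pi_1,\dots,\pi_n$; an even index contributes to $\easc_B(\pi)$ exactly when it is an ascent, an odd index contributes to $\odes_B(\pi)$ exactly when it is a descent, and these two families of indices are disjoint with union $[n-1]_0$. Hence $\easc_B(\pi)+\odes_B(\pi)\le n$, with equality precisely when every even index is an ascent and every odd index is a descent, that is, precisely when $0<\pi_1>\pi_2<\pi_3>\cdots$. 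So $\hat{B}_n(t)=\sum_{\pi\in\BB_n}t^{\easc_B(\pi)+\odes_B(\pi)}$ has degree $n$ and $[t^n]\hat{B}_n(t)=S^B_n$.

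Next I would specialize Theorem~\ref{thm:pan_zeng_biv_alt_egf} by setting $s=t$ (so that $M=1-t$, the choice of sign being immaterial since $\hat{H}_0$ and $\hat{H}_1$ are invariant under $M\mapsto -M$) and add the two identities to obtain
\begin{equation*}
F(t,u):=\sum_{n\ge 0}\hat{B}_n(t)\frac{u^n}{n!}
=\frac{-(t-1)^2\cos\big((1-t)u\big)+(t^2-1)\sin\big((1-t)u\big)}{2t-(t^2+1)\cos\big(2(1-t)u\big)}.
\end{equation*}
To extract leading coefficients I would work with formal power series in $u$: the coefficient of $u^n$ in $F(t,u)$ is exactly the polynomial $\hat{B}_n(t)/n!$ in $t$, of degree $n$ with leading coefficient $S^B_n/n!$, so the coefficient of $u^n$ in $F(t,u/t)$ is $\hat{B}_n(t)/(n!\,t^n)$, which tends to $S^B_n/n!$ as $t\to\infty$. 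Hence $\lim_{t\to\infty}F(t,u/t)=\sum_{n\ge0}S^B_n u^n/n!$ coefficientwise.

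Finally I would compute the same limit from the closed form. Since $(1-t)u/t\to -u$ as $t\to\infty$, dividing numerator and denominator of $F(t,u/t)$ by $t^2$ and using $(t-1)^2/t^2\to1$, $(t^2-1)/t^2\to1$, $2t/t^2\to0$ together with the continuity of $\cos$ and $\sin$ gives
\begin{equation*}
\lim_{t\to\infty}F(t,u/t)=\frac{-\cos u-\sin u}{-\cos 2u}
=\frac{\cos u+\sin u}{(\cos u-\sin u)(\cos u+\sin u)}=\frac{1}{\cos u-\sin u},
\end{equation*}
where the last step uses $\cos 2u=\cos^2 u-\sin^2 u$. Comparing the two evaluations of $\lim_{t\to\infty}F(t,u/t)$ proves the theorem. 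The only step that needs any care is the coefficientwise passage to the limit $t\to\infty$, and this is clean once one notes (from Step~1) that $[u^n]F(t,u)$ is literally the polynomial $\hat{B}_n(t)/n!$, so no analytic interchange of limit and summation is required; everything else is the bookkeeping of the specialization $s=t$ in Theorem~\ref{thm:pan_zeng_biv_alt_egf} and of the elementary trigonometric limits.
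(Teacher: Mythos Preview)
Your proof is correct, but it takes a different route from the paper's. The paper does not prove Theorem~\ref{thm:snake-gf} directly; it is quoted as a classical result and then recovered as the $q=1$ specialization of Corollary~\ref{thm:q-snake-gf}, whose proof is a direct substitution $s_1=t_0=0$, $t_1=s_0=1$ in the five-variable generating function of Theorem~\ref{q-egf-type-b-counterpart-pan-zeng}. The point is that with four separate variables tracking even ascents, odd ascents, even descents, and odd descents, setting the two ``forbidden'' variables to zero isolates exactly the snakes, so no limit is needed.

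Your argument instead stays with the two-variable polynomial $\hat{B}_n(s,t)$, collapses to the diagonal $s=t$, and then extracts the top coefficient via the substitution $u\mapsto u/t$ and the limit $t\to\infty$. This is a legitimate and self-contained alternative that requires only Theorem~\ref{thm:pan_zeng_biv_alt_egf} (Pan and Zeng's $q=1$ result) rather than the paper's own five-variable Theorem~\ref{q-egf-type-b-counterpart-pan-zeng}; the trade-off is the extra limit step. One small stylistic remark: your appeal to ``continuity of $\cos$ and $\sin$'' is a bit informal. The clean way to phrase the last step is purely algebraic: after substituting $u\mapsto u/t$ and dividing numerator and denominator by $t^2$, both become formal power series in $u$ with coefficients that are polynomials in $1/t$, the constant term of the denominator being $2/t-(1+1/t^2)$, which is a unit in $\mathbb{Q}[[1/t]]$. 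Evaluating each coefficient at $1/t=0$ then gives the claimed identity with no analytic input.
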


The following corollary of Theorem \ref{q-egf-type-b-counterpart-pan-zeng}
is now easy.
\begin{corollary}
\label{thm:q-snake-gf}
We have the following egf of the $S^B_n(q)$ polynomials:
    $$\sum\limits_{n\ge 0} S^B_n(q)
\frac{u^n}{B_n(1,q)}=
\frac{\cos_q(u)\cos_B(u;q)+(\sin_q(u)-1)\sin_B(u;q)}{\cos_q(u)}$$
\end{corollary}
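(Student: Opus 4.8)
The plan is to realize the snakes in $\BB_n$ as exactly the signed permutations whose ascent--descent pattern is completely prescribed, read off $S^B_n(q)$ as a specialization of the five-variable polynomial $B_n(s_0,s_1,t_0,t_1,q)$, and then invoke Theorem \ref{q-egf-type-b-counterpart-pan-zeng}.

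First I would record the following combinatorial fact. For $w = w_0, w_1, \dots, w_n \in \BB_n$ with $w_0 = 0$, the entries $w_0, w_1, \dots, w_n$ are pairwise distinct, so every index $i \in [n-1]_0$ is either an ascent or a descent of $w$. Hence $w$ is a snake, i.e. $0 = w_0 < w_1 > w_2 < \cdots$, if and only if every even index in $[n-1]_0$ is an ascent and every odd index is a descent; equivalently, $\oasc_B(w) = 0$ and $\edesb(w) = 0$. Moreover, for such $w$ one automatically has $\eascb(w) = \ceil{n/2}$ and $\odesb(w) = \nhalf$. Consequently, in $B_n(s_0,s_1,t_0,t_1,q)$ the substitution $s_1 = t_0 = 0$ kills every term coming from a non-snake, and the further specialization $s_0 = t_1 = 1$ gives $B_n(1,0,0,1,q) = \sum_{w \in \Snake_n^B} q^{\invb(w)} = S^B_n(q)$, with the case $n=0$ immediate. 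Splitting into even and odd degrees and recalling \eqref{eqn:defn_q_egf-type-b-five-var-even}--\eqref{eqn:defn_q_egf-type-b-five-var-odd}, this yields
\[
\sum_{n\ge 0} S^B_n(q)\,\frac{u^n}{B_n(1,q)} \;=\; H_0(1,0,0,1,q,u) + H_1(1,0,0,1,q,u).
\]

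Next I would substitute $(s_0,s_1,t_0,t_1) = (1,0,0,1)$ into the two formulas of Theorem \ref{q-egf-type-b-counterpart-pan-zeng}. Here $m^2 = (s_0-t_0)(s_1-t_1) = -1$, so, writing $m = i$, the common denominator $s_0 s_1 - (s_0 t_1 + s_1 t_0)\cosh_q(mu) + t_0 t_1\,\mathrm{e}_q(mu)\mathrm{e}_q(-mu)$ collapses to $-\cosh_q(iu)$, the numerator of $H_0/(s_0-t_0) = H_0$ becomes $-\cosh_q(iu)\cosh_B(iu;q) + \sinh_q(iu)\sinh_B(iu;q)$, and the numerator of $H_1$ becomes $i\,\sinh_B(iu;q)$. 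Adding the two fractions and passing from the $q$-hyperbolic functions at imaginary argument to the $q$-trigonometric ones via $\cosh_q(iu) = \cos_q(u)$, $\sinh_q(iu) = i\sin_q(u)$, $\cosh_B(iu;q) = \cos_B(u;q)$ and $\sinh_B(iu;q) = i\sin_B(u;q)$, the factors of $i$ combine, the denominator becomes $\cos_q(u)$, and collecting the numerator terms gives the asserted identity.

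Almost nothing here is delicate. The one point worth checking is that Theorem \ref{q-egf-type-b-counterpart-pan-zeng} is an identity of formal power series in $u$ whose coefficients are polynomials in $s_0, s_1, t_0, t_1$: once the lone visible factor $m$ in $H_1$ is absorbed (it multiplies $\sinh_B(mu;q)$ and $\sinh_q(mu)$, each of which is already divisible by $m$), both sides depend on $m$ only through $m^2$, so evaluating at the point where $m^2 = -1$ is legitimate and involves no choice of a square root. The remaining work, and the only place errors can creep in, is the sign bookkeeping in converting between the $q$-hyperbolic and $q$-trigonometric functions; one can sanity-check it by setting $q = 1$, where $B_n(1,1) = 2^n n!$, $\cos_B(u;1) = \cos(u/2)$, $\sin_B(u;1) = \sin(u/2)$, and the right-hand side must reduce to Springer's Theorem \ref{thm:snake-gf} after the rescaling $u \mapsto u/2$.
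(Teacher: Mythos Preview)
Your proposal is correct and follows exactly the paper's own route: specialize Theorem \ref{q-egf-type-b-counterpart-pan-zeng} at $s_1=t_0=0$, $s_0=t_1=1$ and add $H_0$ and $H_1$. You supply considerably more detail than the paper's one-line proof, in particular the combinatorial justification that this specialization isolates snakes and the observation that both sides depend only on $m^2$, so the substitution is legitimate as an identity of formal power series.
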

\begin{proof}
Setting $s_1=t_0=0$ and $t_1=s_0=1$ in both $H_0(s_0,s_1,t_0,t_1,q,u)$ 
and $H_1(s_0,s_1,t_0,t_1,q,u)$
from Theorem \ref{q-egf-type-b-counterpart-pan-zeng} and adding
completes the proof.
\end{proof}

It is easy to see that setting $q=1$ in Corollary \ref{thm:q-snake-gf} 
gives us Theorem \ref{thm:snake-gf}.

\subsection{D-snakes}
A $\mathrm{d}$-$\mathrm{snake}$ in $\DD_n$ is a signed permutation 
$w$ in $\DD_n$ that satisfies $-w_2>w_1>w_2<w_3>\dots w_n$. 
Let $\mathrm{Snake}^D_n$ be the set of all 
$\mathrm{d}$-$\mathrm{snakes}$ in $\DD_n$.  Denote 
$|\mathrm{Snake}^D_n|$ by $S^D_n$.
Let $S^D_n(q)=\sum_{w \in \mathrm{Snake}^D_n} q^{\invd(w)}.$  Define
$\SD_0(q,u) = \sum\limits_{n\ge 1} S^D_{2n}(q) \frac{u^{2n}}{D_{2n}(1,q)}$ and
$\SD_1(q,u) = \sum\limits_{n\ge 1} S^D_{2n+1}(q) \frac{u^{2n+1}}{D_{2n+1}(1,q)}$.

\begin{corollary}
\label{thm:q-dsnake-gf}
We have the following egf of the $S^D_n(q)$ polynomials:
    \begin{eqnarray}
	\label{eqn:snakes-d-q-analogue-even}
    \SD_0(q,u) &=& \frac{-2\cos^2_q(u)+\cos_q(u)(\cos_D(u;q)-1)-2\sin^2_q(u)+\sin_q(u)\sin_D(u;q)}{-\cos_q(u)},\\
	\label{eqn:snakes-d-q-analogue-odd}
    \SD_1(q,u) & = &
\frac{-2\sin_q(u)+u\cos_q(u)+\sin_D(u;q)}{-\cos_q(u)}.
    \end{eqnarray}
\end{corollary}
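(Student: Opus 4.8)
The plan is to mimic the proof of Corollary \ref{thm:q-snake-gf} for the $\BB_n$ case, but using the type D five-variable generating functions $\cD_0(s_0,s_1,t_0,t_1,q,u)$ and $\cD_1(s_0,s_1,t_0,t_1,q,u)$ from Theorem \ref{thm:pan_zeng_four_var_type-d-version}. First I would observe that a $\mathrm{d}$-$\mathrm{snake}$ $w \in \DD_n$ satisfying $-w_2 > w_1 > w_2 < w_3 > w_4 < \cdots$ is precisely a signed permutation in which descents occur exactly at the odd positions $\overline{1}, 1, 3, 5, \ldots$ and ascents occur exactly at the even positions $2, 4, \ldots$; in other words, $w$ has no even descents and no odd ascents, so $\edesd(w) = 0$ and $\oascd(w) = 0$, while every odd position is a descent and every even position is an ascent. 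Consequently, specializing $s_0 = 1$ (even ascents, which are forced), $t_1 = 1$ (odd descents, which are forced), and $s_1 = 0$ (odd ascents, which are forbidden), $t_0 = 0$ (even descents, which are forbidden) in $D_n(s_0,s_1,t_0,t_1,q)$ kills every term except those coming from $\mathrm{d}$-$\mathrm{snakes}$, each of which then contributes $q^{\invd(w)}$. Hence $S^D_n(q)$ is obtained by this specialization, and $\SD_0(q,u) + \SD_1(q,u)$ is obtained by the same specialization of $\cD_0 + \cD_1$.

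Next I would carry out the substitution $s_0 = 1$, $s_1 = 0$, $t_0 = 0$, $t_1 = 1$ in the formulas of Theorem \ref{thm:pan_zeng_four_var_type-d-version}. Under this substitution $m = \sqrt{(s_0 - t_0)(s_1 - t_1)} = \sqrt{(1)(-1)} = i$, so $mu = iu$ and the hyperbolic-in-$mu$ functions become trigonometric: $\cosh_q(mu) = \cos_q(u)$, $\sinh_q(mu) = i \sin_q(u)$, $\mathrm{e}_q(mu)\mathrm{e}_q(-mu) = \mathrm{e}_q(iu)\mathrm{e}_q(-iu)$, and similarly $\sinh_D(mu;q) = i\sin_D(u;q)$, $\cosh_D(mu;q) = \cos_D(u;q)$. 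One must be careful to keep $\SD_0$ and $\SD_1$ separate since they are defined with even and odd powers respectively; I expect $\SD_0(q,u)$ to come from the specialization of $\cD_0(1,0,0,1,q,u)$ and $\SD_1(q,u)$ from that of $\cD_1(1,0,0,1,q,u)$, with the denominator $s_0 s_1 - (s_0 t_1 + s_1 t_0)\cosh_q(mu) + t_0 t_1 \mathrm{e}_q(mu)\mathrm{e}_q(-mu)$ collapsing to $0 - \cos_q(u) + 0 = -\cos_q(u)$ — which matches the stated denominators in \eqref{eqn:snakes-d-q-analogue-even} and \eqref{eqn:snakes-d-q-analogue-odd}. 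I would then expand $T(\mathrm{OD})$ and $T(\mathrm{ED})$ under the substitution; since several of the terms carry factors of $(s_0 - t_0) = 1$, $s_1 = 0$, or $\sqrt{s_0 s_1} = 0$, many terms vanish or simplify, and what survives should reorganize into the numerators $-2\cos^2_q(u) + \cos_q(u)(\cos_D(u;q) - 1) - 2\sin^2_q(u) + \sin_q(u)\sin_D(u;q)$ and $-2\sin_q(u) + u\cos_q(u) + \sin_D(u;q)$ respectively.

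The main obstacle I anticipate is the bookkeeping around the $\sqrt{s_1}$, $\sqrt{s_0 s_1}$, and $1/s_0$ factors scattered through $T(\mathrm{OD})$, $T(\mathrm{ED})$, $\cD_0$, and $\cD_1$ in Theorem \ref{thm:pan_zeng_four_var_type-d-version}: setting $s_1 = 0$ makes several of these coefficients formally $0/0$ or $\sqrt{0}$, so the substitution has to be performed as a limit $s_1 \to 0$ (equivalently, one tracks which $s_1$-powers cancel before setting $s_1 = 0$). A cleaner route, which I would use instead, is to go back one level and specialize directly in the four-variable type D generating functions $\mathcal{D}_0(s,t,q,u)$, $\mathcal{D}_1(s,t,q,u)$ of Theorem \ref{thm:carlitz_scoville_typed_eulerian_q-anal} together with the $\hat{\cD}$ versions of Theorem \ref{thm:type-d-alt-eul}, since a $\mathrm{d}$-$\mathrm{snake}$ has $\edesd = 0$ and maximal $\odesd$, so taking the appropriate coefficient extraction or limit $s \to 0$ (resp. $t \to 1$) in those two-variable egfs avoids the square-root ambiguity; the expressions $\mathrm{OD}$ and $\mathrm{ED}$ defined just before Theorem \ref{thm:carlitz_scoville_typed_eulerian_q-anal} specialize transparently. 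Once the correct specialization is identified, the remaining work is the routine trigonometric simplification described above, and matching it termwise against \eqref{eqn:snakes-d-q-analogue-even} and \eqref{eqn:snakes-d-q-analogue-odd}. Finally I would note, as a sanity check, that setting $q = 1$ (so $\cos_q, \sin_q \to \cos, \sin$, $\cos_D(u;1) = \cos(u/\sqrt{2})\cdot(\text{appropriate constant})$, and $D_n(1,1) = 2^{n-1} n!$) should recover the classical egf for type D snake numbers, paralleling the remark made after Corollary \ref{thm:q-snake-gf}.
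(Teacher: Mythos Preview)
Your approach is correct and is essentially the paper's: the paper specializes $\mathcal{D}_0(s,t,q,u)$ and $\mathcal{D}_1(s,t,q,u)$ from Theorem \ref{thm:carlitz_scoville_typed_eulerian_q-anal} directly (your ``cleaner route''), via the substitution $t\mapsto 1/t$, $u\mapsto u\sqrt{t}$, a multiplication by $t$ (resp.\ $\sqrt{t}$), and then $s=t=0$; on coefficients this turns $D_{2k}(s,t,q)$ into $\sum_{\pi}s^{\edesd(\pi)}t^{k+1-\odesd(\pi)}q^{\invd(\pi)}$, so that $s=t=0$ isolates exactly the permutations with $\edesd=0$ and maximal $\odesd$, i.e.\ the $\mathrm{d}$-snakes. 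Two small corrections: your offhand ``limit $t\to 1$'' is not the right specialization (setting $t=1$ does not pick out maximal $\odesd$), and the $\chD$ versions from Theorem \ref{thm:type-d-alt-eul} are unnecessary. Your Route~1 limit $s_1\to 0$ in Theorem \ref{thm:pan_zeng_four_var_type-d-version} is in fact equivalent to the paper's substitution, since that theorem is obtained from Theorem \ref{thm:carlitz_scoville_typed_eulerian_q-anal} by precisely this change of variables, so going back one level just avoids the $\sqrt{s_1}$ bookkeeping you correctly flagged.
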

\begin{proof}
Set $t=1/t$, $u=u \sqrt{t}$, multiply by $t$ and setting $s=t=0$ in 
Theorem \ref{thm:carlitz_scoville_typed_eulerian_q-anal} gives us 
	\eqref{eqn:snakes-d-q-analogue-even}.
Set $t=1/t$, $u=u \sqrt{t}$, multiply by $\sqrt{t}$ and setting $s=t=0$ in 
Theorem \ref{thm:carlitz_scoville_typed_eulerian_q-anal} gives us 
	\eqref{eqn:snakes-d-q-analogue-odd}.
\end{proof}

Setting $q=1$ in Corollary  \ref{thm:q-dsnake-gf}
gives us the following egf which is given by Springer.
\begin{corollary}[Springer]
    The egf for the $S^D_n$ is:
    \begin{eqnarray*}
   \sum\limits_{n\ge 1} S^D_{2n} \frac{u^{2n}}{(2n)!} & = & \frac{\cos(u)-\cos(2u)-1}{-\cos(2u)},\\
   \sum\limits_{n\ge 1} S^D_{2n+1} \frac{u^{2n+1}}{(2n+1)!} & = &\frac{-\sin(2u)+u\cos(2u)+\sin(u)}{-\cos(2u)}.
    \end{eqnarray*}
\end{corollary}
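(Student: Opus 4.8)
The plan is to specialize $q=1$ in the two identities of Corollary \ref{thm:q-dsnake-gf} and simplify the $q$-trigonometric functions to ordinary ones.

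First I would record the relevant $q=1$ data. Since $D_n(1,1)=|\DD_n|=2^{n-1}n!$ for $n\ge 1$, we have $\exp_D(u;1)=1+\sum_{n\ge 1}u^n/(2^{n-1}n!)=2e^{u/2}-1$; splitting into odd and even parts and replacing $u$ by $\pm iu$ then gives the half-angle reductions $\cosh_D(u;1)=2\cosh(u/2)-1$, $\sinh_D(u;1)=2\sinh(u/2)$, $\cos_D(u;1)=2\cos(u/2)-1$ and $\sin_D(u;1)=2\sin(u/2)$. Likewise $\mathrm{e}_q(u)\to e^u$ at $q=1$, so $\cos_q(u)$ and $\sin_q(u)$ become $\cos u$ and $\sin u$. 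Because $D_{2n}(1,q)$ specializes to $2^{2n-1}(2n)!$ and $D_{2n+1}(1,q)$ to $2^{2n}(2n+1)!$, a statement in the classical normalization comes out only after the substitution $u\mapsto 2u$ together with an overall factor $\tfrac12$; one checks $\tfrac12\,\SD_0(1,2u)=\sum_{n\ge 1}S^D_{2n}u^{2n}/(2n)!$ and $\tfrac12\,\SD_1(1,2u)=\sum_{n\ge 1}S^D_{2n+1}u^{2n+1}/(2n+1)!$.

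Next I would carry out the substitution in \eqref{eqn:snakes-d-q-analogue-even} and \eqref{eqn:snakes-d-q-analogue-odd}. After $u\mapsto 2u$ the half-angle arguments inside $\cos_D$ and $\sin_D$ become plain $\cos u$ and $\sin u$; the numerator of the even identity then collapses, using $\cos 2u\cos u+\sin 2u\sin u=\cos u$ together with $\cos^2 2u+\sin^2 2u=1$, to $-2+2\cos u-2\cos 2u$, i.e.\ a constant multiple of $\cos u-\cos 2u-1$, and the numerator of the odd identity (whose explicit $u$ also becomes $2u$) to $-2\sin 2u+2u\cos 2u+2\sin u$, a constant multiple of $-\sin 2u+u\cos 2u+\sin u$; cancelling the factor $2$ against the overall $\tfrac12$ leaves precisely the two egfs in the statement.

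I do not expect a genuine obstacle here: the content is a routine specialization. The only points needing care are the bookkeeping of the powers of $2$ coming from $|\DD_n|=2^{n-1}n!$ (that is, pinning down the rescaling $u\mapsto 2u$ and the overall factor $\tfrac12$) and the elementary trigonometric simplifications. As a consistency check, the identical recipe applied to Corollary \ref{thm:q-snake-gf} recovers Springer's type B snake egf of Theorem \ref{thm:snake-gf}.
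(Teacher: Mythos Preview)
Your proposal is correct and follows exactly the paper's approach: the paper's entire argument is the single sentence ``Setting $q=1$ in Corollary~\ref{thm:q-dsnake-gf} gives us the following egf which is given by Springer,'' and you have simply made explicit the bookkeeping (the half-angle reductions $\cos_D(u;1)=2\cos(u/2)-1$, $\sin_D(u;1)=2\sin(u/2)$, the rescaling $u\mapsto 2u$ with the factor $\tfrac12$ coming from $|\DD_n|=2^{n-1}n!$) and the trigonometric simplifications that the paper leaves to the reader. Your computations check out.
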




\section*{Acknowledgement} 
The first author acknowledges SERB-National Post Doctoral Fellowship 
(File No. PDF/2021/001899) during the preparation of this work and 
profusely thanks Science and Engineering Research Board, Govt. of India 
for this funding. The first author also acknowledges excellent 
working conditions in the Department of Mathematics, Indian 
Institute of Science. The second author thanks National Board of 
Higher Mathematics for funding. The second author  also thanks 
Indian Institute of Technology, Bombay for its excellent 
working conditions.

\bibliographystyle{acm}
\bibliography{main}

\begin{thebibliography}{10}

\bibitem{arnold-calculus-snakes}
{\sc Arnold, V.}
\newblock The calculus of snakes and the combinatorics of {B}ernoulli, {E}uler
  and {S}pringer numbers of {C}oxeter groups.
\newblock {\em Russian Mathematical Surveys 47}, 1 (1992), 1--51.

\bibitem{carlitz-scoville-perm-rises-falls}
{\sc Carlitz, L., and Scoville, R.}
\newblock Enumeration of rises and falls by position.
\newblock {\em Discrete Math 5\/} (1973), 45--59.

\bibitem{alt-euler-poly-left-peak-poly_ma_fang_mans_yeh}
{\sc Fang, Q., Ma, S.-M., Mansour, T., and Yeh, Y.-N.}
\newblock Alternating eulerian polynomials and left peak polynomials.
\newblock {\em Discrete Mathematics 345}, 3 (2022), 112714.

\bibitem{foata-schutzenberger-eulerian}
{\sc Foata, D., and Sch{\"u}tzenberger, M.-P.}
\newblock {\em Th{\'e}orie g{\'e}om{\'e}trique des polyn{\^o}mes
  {E}ul{\'e}riens}, available at
  http://www.mat.univie.ac.at/$\sim$slc/books/~ed.
\newblock Lecture Notes in Mathematics, 138, Berlin, Springer-Verlag, 1970.

\bibitem{hyatt-recurrences_eulerian_typeBD}
{\sc Hyatt, M.}
\newblock Recurrences for {E}ulerian {P}olynomials of {T}ype {B} and {T}ype
  {D}.
\newblock {\em Annals of Combinatorics 20}, 4 (2016), 869--881.

\bibitem{ma-fang-mansour-yeh-alt_eul_poly_left_pk}
{\sc Ma, S.-M., Fang, Q., Mansour, T., and Yeh, Y.-N.}
\newblock Alternating eulerian polynomials and left peak polynomials.
\newblock {\em Discrete Math 345\/} (2022), Paper 112714, 12 pp.

\bibitem{pan_zeng-enum_perm_parity_descent}
{\sc Pan, Q., and Zeng, J.}
\newblock Enumeration of permutations by the parity of descent position.
\newblock {\em See https://arxiv.org/abs/2209.15302\/} (2022).

\bibitem{pan-new_combin_formula_alt_des}
{\sc Pan, Q.~Q.}
\newblock A new combinatorial formula for alternating descent polynomials.
\newblock {\em arXiv preprint arXiv:2207.06212 [math.CO]\/} (2022).

\bibitem{petersen-eulerian-nos-book}
{\sc Petersen, T.~K.}
\newblock {\em Eulerian Numbers}, 1st~ed.
\newblock Birkh{\" a}user, 2015.

\bibitem{reiner-descents-weyl}
{\sc Reiner, V.}
\newblock Descents and one-dimensional characters for classical {W}eyl groups.
\newblock {\em Discrete Mathematics 140\/} (1995), 129--140.

\bibitem{reiner-distrib-descents-len-coxeter}
{\sc Reiner, V.}
\newblock The distribution of descents and length in a {C}oxeter {G}roup.
\newblock {\em The Electronic Journal of Combinatorics 2\/} (1995), R25.

\bibitem{Remmelaltdes}
{\sc Remmel, J.~B.}
\newblock Generating {F}unctions for {A}lternating {D}escents and {A}lternating
  {M}ajor {I}ndex.
\newblock {\em Annals of Combinatorics 16\/} (2012), 625--650.

\bibitem{springer-remarks-combin}
{\sc Springer, T.~A.}
\newblock Remarks on a combinatorial problem.
\newblock {\em Nieuw Arch. Wisk. 19\/} (1971), 30--36.

\bibitem{verges-enum-snakes-cycle-alt}
{\sc Verg{\`e}s, M.~J.}
\newblock Enumeration of snakes and cycle-alternating permutations.
\newblock {\em Australasian {J}ournal of {C}ombinatorics 60(3)\/} (2014),
  279--305.

\end{thebibliography}
\end{document}